\newcommand{\lab}[1]{\label{#1}}                
\newcommand{\myeqref}[1]{\eqref{#1}}  
\newcommand\eqn[1]{(\ref{#1})}
\newcommand{\be}{\begin{equation}}
  \newcommand{\ee}{\end{equation}}
\newcommand{\bea}{\begin{eqnarray}}
  \newcommand{\eea}{\end{eqnarray}}
\newcommand{\bean}{\begin{eqnarray*}}
  \newcommand{\eean}{\end{eqnarray*}}
\newtheorem{thm}{Theorem}
\newtheorem{cor}[thm]{Corollary}
\newtheorem{lemma}[thm]{Lemma}
\newtheorem{prop}[thm]{Proposition}
\def\G{{\mathcal G}}
\def\ex{{\bf E}}
\def\N{{\mathcal N}}
\def\S{{\mathcal S}}
\def\eps{\epsilon}
\def\la{\lambda}
\def\ss{\smallskip}
\newcommand{\setN}{\ensuremath{\mathbb{N}}}
\newcommand{\eqdef}{:=}
\newcommand{\eqdefinv}{=:}
\newcommand{\set}[2][]{#1\{ {#2} #1\}}
\newcommand{\paren}[2][]{#1( {#2} #1)}
\newcommand{\card}[2][]{#1| #2 #1|}
\newcommand{\abs}[2][]{#1| #2 #1|}
\newcommand{\floor}[2][]{#1\lfloor #2 #1\rfloor}
\newcommand{\mean}[2][]{\mathbf{E}\,#1(#2#1)}
\newcommand{\var}[2][]{\mathop{\rm Var}#1(#2#1)}
\newcommand{\Mcal}{\ensuremath{\mathcal{M}}}
\newcommand{\Ccal}{\ensuremath{\mathcal{C}}}
\newcommand{\Kcal}{\ensuremath{\mathcal{K}}}
\newcommand{\Tcal}{\ensuremath{\mathcal{T}}}
\newcommand{\Xcal}{\ensuremath{\mathcal{X}}}
\newcommand{\Scal}{\ensuremath{\mathcal{S}}}
\newcommand{\iseq}{\ensuremath{\mathbf{i}}}
\newcommand{\rseq}{\ensuremath{\mathbf{r}}}
\newcommand{\Gnp}{\G(n,p)}
\newcommand{\cn}{\check n}
\newcommand{\cm}{\check m}
\newcommand{\cp}{\check p}
\newcommand{\HM}{H_{\Mcal}}
\title{A transition of limiting distributions of large matchings in
  random graphs}
\author[gao]{Pu Gao\fnref{fn1}}
\ead{pu.gao@utoronto.ca}
\author[sato]{Cristiane M.~Sato\fnref{fn2}}
\ead{cmsato@uwaterloo.ca}
\address[gao]{University of Toronto}
\address[sato]{Universidade Federal do ABC, University of Waterloo}
\begin{document}

\begin{abstract}
  We study the asymptotic distribution of the number of matchings of
  size $\ell=\ell(n)$ in $\Gnp$ for a wide range of $p=p(n)\in (0,1)$
  and for every $1\le \ell\le \floor{n/2}$. We prove that this
  distribution changes from normal to log-normal as $\ell$ increases,
  and we determine the critical value of $\ell$, as a function of $n$
  and $p$, at which the transition of the limiting distribution
  occurs.
\begin{keyword}
matchings\sep random graphs\sep distribution
\end{keyword}
\end{abstract}

\maketitle

\section{Introduction}

Let $\Gnp$ denote the probability space of random graphs on $n$
vertices, where each edge is included independently with probability
$p$. A classical result by Ruci\'{n}ski~\cite{R} shows that the
distribution of the number of small subgraphs (meaning the number of
subgraphs isomorphic to a graph with a fixed size) is asymptotically
normal if its expected value goes to infinity as $n$ goes to infinity.
This is naturally expected as this random variable can be expressed as
a sum of indicator random variables such that each variable is
dependent only on a small proportion of the other variables. However,
this intuitive explanation fails when the size of the subgraphs
increases since then each indicator variable depends on more and more
of the other variables. It has been shown by Janson~\cite{J3} that the
numbers of spanning trees, perfect matchings, and Hamilton cycles in
$\Gnp$ (when $p$ is in an appropriate range) are asymptotically
log-normally distributed, which behave quite differently from
variables with the normal distribution. The first author~\cite{G4,G6}
recently proved that the numbers of $d$-factors (for $d$ not growing
too fast), triangle-factors and triangle-free subgraphs also follow a
log-normal distribution (when $p$ is in an appropriate range).
Comparing the result by Ruci\'{n}ski~\cite{R} with that by
Janson~\cite{J3}, we notice that the distribution of the number of
$\ell$-matchings (matchings of size $\ell$) must undergo certain
phases of transition, starting from normal and ending with log-normal,
when $\ell$ increases from a constant size to $\floor{n/2}$. This
motivates our research in this paper. We study the asymptotic
distribution of the number of matchings of size $\ell$ in $\Gnp$,
denoted by $X_{n,\ell}$, for every $1\le \ell\le \floor{n/2}$. In
particular, we prove that $X_{n,\ell}$ is asymptotically normal if
$\ell=o(n\sqrt{p})$ and is asymptotically log-normal if
$\ell=\Omega(n\sqrt{p})$. This holds for all $p$ such that
$1-p=\Omega(1)$ and $n^{1/8-\eps} p\to\infty$, where $\eps>0$ is an
arbitrarily small constant. To our best knowledge, this is the first
paper that studies the distribution of the number of copies of a
subgraph whose order is between constant and $n$ in
$\Gnp$.

This same phenomenon of the transition of limiting distributions of a
certain subgraph count as the size of the subgraph increases has been
observed and studied in another well-known random graph space: the
random $d$-regular graphs. There is a classical result by
Bollob\'{a}s~\cite{B} and Wormald~\cite{W} stating that the
distributions of the numbers of short cycles (cycles with constant
sizes) in a random $d$-regular graph are asymptotically Poisson, known
as the Poisson paradigm~\cite{AS}, whereas it was observed later by
Robinson and Wormald~\cite{RW4,RW5} that the number of Hamilton cycles
is determined by the numbers of short cycles. Janson~\cite{J} proved
that the logarithm of the number of Hamilton cycles can be expressed
as the linear combination of a sequence of independent Poisson
variables, based on the results in~\cite{RW4,RW5}. Garmo~\cite{G}
filled the gap and determined the distribution of all long
cycles, whose sizes vary from constant to $n$ (i.e.\ the Hamiltonian
cycles). His result also describes the critical point (of the size of
the cycles), at which the distribution of the number of the cycles
changes from a linear combination of independent Poisson variables to
the exponential of that form, the same as what was described
in~\cite{J}.

Note that the proof of our main theorem is not just a generalisation
of the proofs in~\cite{R,J3,G4,G6}. In fact, we use very different
approaches and new techniques. We do apply basic tools that also
appear in~\cite{R,G6} to show that a sequence of distributions
converges to normal or log-normal.  Our proof consists of three
parts. In the first part, we study the subcritical case, where
$\ell=o(n\sqrt{p})$.  The second part deals with $\ell$ such that
$\ell=\Omega(n\sqrt{p})$ but $\ell$ is not too close to $n/2$, whereas
the last part focuses on the near-perfect matchings, where $\ell$ is
very close to $n/2$ (i.e.\ $\ell=n/2-O(n^{\alpha})$ for some
$0<\alpha<1$).  The proof techniques and tools used in these three
parts are different. In the first part, we will use the method of
moments~\cite{JLR} to show that the distribution of $X_{n,\ell}$ is
asymptotically normal. This same method was also used in
Ruci\'{n}ski's proof for constant $\ell$. However, the method of
moments cannot be used to prove distributions that are not uniquely
determined by its moments, for instance, the log-normal
distribution. (For more details on the problem of moments, we refer
the reader to~\cite{B4}.) For this reason, we will use another theorem
from~\cite{G6}, known as the log-normal paradigm, as a basic tool to
prove the second and third parts, equipped with the switching method
(described below). The proof for the third part is a generalisation of
the proof in~\cite{G4} for the perfect matchings, whereas the
switchings used in the second part are very different.

 The switching method was first introduced by McKay~\cite{M2} to
 enumerate (sparse) graphs with given degree sequences. In general,
 the method defines a set of switching operations that map graphs in a
 set $A$ to graphs in another set $B$. By computing the number of
 switchings from $A$ to $B$ and the inverse switchings from $B$ to
 $A$, we can estimate the ratio $|A|/|B|$ in some cases quite
 precisely. This method has been widely used to estimate the
 probability that a multigraph generated by the configuration
 model~\cite{B6} is simple (e.g.\ see~\cite{MW,MW2,GSW2}) or to
 estimate probabilities of certain events (e.g.\ see~\cite{PP}).

 Applying the log-normal paradigm in~\cite[Theorems 1 and 3]{G6}
 requires a close analysis of the set of ordered pairs of
 $\ell$-matchings $(M_1,M_2)$ that share exactly $j$ edges, for $j$ in
 a certain range. The analysis in~\cite{G4} (for perfect matchings) is
 based on an argument using the switching method and that proof easily
 extends to our proof for the near-perfect matchings.  Compared with
 the case of the near-perfect matchings, the difference in the proof
 for the second part (for large $\ell$ but not near-perfect matchings)
 lies in the additional effort to analyse the typical values of the
 number of vertices incident to both matchings in the pair
 $(M_1,M_2)$.

 In the proof for the subcritical case $\ell=o(n\sqrt{p})$, in order to
 apply the method of moments, we need to compute the $k$-th central
 moment for each integer $k\ge 1$, which requires a close study of the
 graph structure composed by the union of $k$ (not necessarily
 distinct) $\ell$-matchings. With an unusual use of the switching
 method (unlike in~\cite{M2,MW,MW2} and most other work that uses
   the switching method, in which usually a small number of edges are
   switched, we may switch $o(n)$ edges in a single step), we will
 characterise the graph structure that leads the contribution to the
 $k$-th central moment. As shown in Lemma~\ref{l:sub-significant} in
 Section~\ref{s:sub-significant}, for each even $k$, the leading
 structure is $k/2$ edge-disjoint kissing pairs; whereas for odd $k$,
 the leading structure is $(k-3)/2$ edge-disjoint kissing pairs
 together with a chained triple or a flower with $3$ petals. (The
 terminology of kissing pairs, chained triples and flowers are defined
 in Section~\ref{s:sub-significant} and an example is given in
 Figure~\ref{fig:structures}.) We think this is the first time that
 the switching method is used to determine certain graph
 structures. These leading structures were proved by Ruci\'{n}ski for
 constant $\ell$ (with a different approach), but the use of the
 switchings allows us to derive a proof for all $\ell=o(n\sqrt{p})$.

 In this paper, we assume that $1-p=\Omega(1)$ and $p\ge
 n^{-1/8+\eps}$ for some small constant $\eps>0$. In fact, we only
 assume $1-p=\Omega(1)$ and $p=\omega(n^{-2})$ for the
 subcritical case. The case where $1-p\to 0$ is less interesting as
 there is less ``randomness'', and this condition is indeed necessary
 for the supercritical case since, otherwise, the limiting
 distribution of the number of perfect matchings (assuming that $n$ is
 even) will be normal instead of log-normal (see~\cite[Theorem
 2.3]{G4}). The case $p=O(n^{-2})$ is also less interesting as in this
 case there will be bounded number of edges present, pairwise
 vertex-disjoint, with probability going to $1$. The asymptotic
 distribution function of $X_{n,\ell}$ can be explicitly formulated
 and it is easy to see that $X_{n,\ell}$ is not Poisson convergent
 unless $\ell=1$. This agrees with the result by
 Ruci\'{n}ski~\cite[Theorem 1]{R}. In that sense, our result covers
 almost all interesting values of $p$. For the supercritical case, we
 only use the condition $p\ge n^{-1/8+\eps}$ for values of
 $\ell=n/2-O(n^{7/8+\eps})$ (see Theorem~\ref{t:nearperfect} and the
 remark below that). In the proof for other values of $\ell$, we only
 assume that $p=\omega(n^{-1/2})$. In fact, $p=\omega(n^{-1/2})$ is
 likely to be another necessary condition in the supercritical case
 since a result by Janson in~\cite{J3} implies that the hypotheses in
 the tool~\cite[Theorems 1 and 3]{G6} that we use will no longer be
 satisfied (for $\ell=n/2$). We conjecture that the condition $p\ge
 n^{-1/8+\eps}$ in our main theorem can be weakened to
 $p=\omega(n^{-1/2})$. The distribution of $X_{n,\ell}$ in the
 supercritical case for $p=O(n^{-1/2})$ remains open.

\section{Main results}

An $\ell$-matching is a matching with $\ell$ edges. Let $X =
X_{n,\ell}$ denote the number of subgraphs of $\Gnp$ that are
isomorphic to an $\ell$-matching. Throughout the paper, let $N =
\binom{n}{2}$ and define $m!!$ to be $\prod_{i=0}^{\lfloor
  (m-1)/2\rfloor} (m-2i)$ for any real number $m\ge 1$. Then, the
number of $\ell$-matchings in the complete graph $K_n$ is
\begin{equation}
\binom{n}{2\ell}(2\ell-1)!!=\binom{n}{2\ell}\frac{(2\ell)!}{2^{\ell}\ell!}. \lab{sizeOfs}
\end{equation}
 Let $\la_{n,\ell} := \ex{X_{n,\ell}}$ and $\sigma_{n,\ell} :=
\sqrt{\var{X_{n, \ell}}}$. Then, obviously,
\[
\la_{n,\ell}=\binom{n}{2\ell}\frac{(2\ell)!}{2^{\ell}\ell!}p^{\ell}.
\]
Define
\begin{equation}
  \label{eq:barsigma}
  \bar\sigma := \bar\sigma_{n,\ell} =
  \left(\ell\binom{n}{2\ell}\binom{n-2}{2\ell-2}
    (2\ell -1)!! (2\ell -3)!! (p^{2\ell-1}-p^{2\ell})\right)^{1/2}.
\end{equation}
We will show that $\sigma_{n,\ell}\sim \bar\sigma_{n,\ell}$. Moreover,
we will prove the following theorem about the central moments of
$X_{n,\ell}$.
\begin{thm}\lab{t:moments} Suppose that $1-p=\Omega(1)$. Then, for every positive $\ell=\ell(n)=o(n\sqrt{p})$ and for
  every fixed integer $k\ge 2$,
\begin{equation*}
\ex ((X_{n,\ell}-\la_{n,\ell})^k)=\left\{
\begin{array}{ll}
(1+o(1))(k-1)!!\bar\sigma^k, & \mbox{if $k$ is even};\\
o(\bar\sigma^k), & \mbox{if $k$ is odd}.
\end{array}
\right.
\end{equation*}
\end{thm}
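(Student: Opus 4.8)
The plan is to prove Theorem~\ref{t:moments} by the method of moments, computing the $k$-th central moment directly through the expansion
\[
\ex\big((X_{n,\ell}-\la_{n,\ell})^k\big)=\sum_{M_1,\dots,M_k}\prod_{i=1}^{k}\big(\mathbf{1}[M_i\subseteq\Gnp]-p^{\ell}\big),
\]
where each $M_i$ ranges over all $\ell$-matchings in $K_n$. Writing $\mathbf{1}[M_i\subseteq\Gnp]-p^{\ell}=(\mathbf{1}[M_i\subseteq\Gnp]-p^{|E(M_i)|})$ is not quite right; instead the standard move is to center each indicator and use that $\ex(\prod_i(\mathbf{1}[M_i\subseteq\Gnp]-p^{\ell}))=0$ whenever some edge lies in exactly one of the $M_i$. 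Hence only those $k$-tuples contribute in which the union graph $H=M_1\cup\cdots\cup M_k$ has every edge covered at least twice; equivalently, no $M_i$ contains an edge that is ``private'' to it. I would index the contributing tuples by the isomorphism type of the (edge-coloured, edge-multiplicity-weighted) union structure, so that
\[
\ex\big((X_{n,\ell}-\la_{n,\ell})^k\big)=\sum_{H}(\text{number of embeddings of }H)\cdot(\text{combinatorial weight of }H),
\]
and the weight of a union with $e(H)$ distinct edges is $O\big((p-p^2)^{\text{(something)}}\big)$ up to lower-order corrections, controlled by inclusion–exclusion on which edges are present.

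Next I would set up a \emph{reduction via switchings} to identify which union structures $H$ dominate. The key parameters of $H$ are the number of distinct edges $e=e(H)$ and the number of vertices $v=v(H)$; since each $M_i$ is a matching on $2\ell$ vertices and edges must be multiply covered, one gets the constraint $v\le 2\ell$ with the leading contributions coming from $v$ as large as possible. Counting the number of ways to choose the $k$-tuple realising a given $H$ in $K_n$ scales like $n^{v}$ times matching-choice factors, while the probability weight scales like $p^{e}$ up to $(1-p)$ corrections; so we must maximise a quantity of the form $n^{v}p^{e}$ over admissible $H$, normalised against $\bar\sigma^k=\Theta\big((n^{2\ell-1}p^{2\ell-1}(1-p))^{k/2}\big)$ in rough terms. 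The switching argument—an unusual one, where up to $o(n)$ edges may be rerouted in a single step, as the introduction advertises—lets me show that any $H$ that is not (for even $k$) a disjoint union of $k/2$ ``kissing pairs'' (two $\ell$-matchings sharing exactly one edge, paired up), or (for odd $k$) such a union of $(k-3)/2$ kissing pairs together with one ``chained triple'' or ``flower with 3 petals'' on the remaining three matchings, contributes $o(\bar\sigma^k)$. The mechanism is that any ``inefficient'' feature of $H$ (an edge of multiplicity $>2$, a shared component spanning more than two matchings that is not one of the allowed exceptional gadgets, a matching sharing more than one edge with another) can be switched away, and the switching ratio produces a factor that is $o(1)$ relative to the dominant term—this is where the hypothesis $\ell=o(n\sqrt{p})$ enters, since that is precisely the regime where ``spending'' an extra shared edge costs an $o(1)$ factor.

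Once the dominant structures are isolated, the computation becomes bookkeeping: for even $k$, the number of ways to partition $\{1,\dots,k\}$ into $k/2$ unordered pairs is $(k-1)!!$, and each kissing pair contributes (asymptotically) a factor $\bar\sigma^2$ by the very definition~\eqref{eq:barsigma} of $\bar\sigma$—indeed $\bar\sigma^2$ is, up to $(1+o(1))$, the number of ordered pairs of $\ell$-matchings sharing exactly one edge times the associated covariance weight $p^{2\ell-1}-p^{2\ell}$. For odd $k$ there is no way to pair up all $k$ indices, so the ``best'' admissible structure loses a factor relative to $\bar\sigma^k$ (the exceptional triple gadget contributes $o(\bar\sigma^3)$ because a chained triple or $3$-petal flower on the same vertex set has strictly fewer distinct edges per matching than two kissing pairs would over four matchings), giving the $o(\bar\sigma^k)$ bound. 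I expect the main obstacle to be the switching analysis in the second paragraph: one must define the switching operations carefully enough to cover \emph{all} non-dominant union types in one uniform framework, verify that forward and reverse switching counts are what one claims even when $\Theta(\ell)=o(n\sqrt p)$ edges move at once, and control the accumulated error over the at-most-$O_k(1)$ isomorphism types—in particular ensuring the $o(1)$ factors do not secretly depend on $k$ in a way that breaks the fixed-$k$ limit. The rest, including the verification that $\sigma_{n,\ell}\sim\bar\sigma_{n,\ell}$ (the $k=2$ case), is essentially a direct second-moment computation.
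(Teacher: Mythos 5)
Your overall plan---isolate the dominant union structures by switchings, show they are $k/2$ edge-disjoint kissing pairs for even $k$ (respectively $(k-3)/2$ kissing pairs plus a chained triple or a $3$-petal flower for odd $k$), then count them via the $(k-1)!!$ pairings with per-pair weight $p^{2\ell-1}-p^{2\ell}$---is the same route the paper takes. However, there is a genuine error at the very first reduction. You claim that $\ex\big(\prod_i(\mathbf{1}[M_i\subseteq\Gnp]-p^{\ell})\big)=0$ whenever some edge lies in exactly one of the $M_i$, so that only tuples whose union has every edge covered at least twice contribute. This is false: the centring is done matching-by-matching, not edge-by-edge, so the product expectation vanishes only when some $M_i$ is edge-disjoint from \emph{all} of the other matchings (then $Y_i=\mathbf{1}[M_i\subseteq\Gnp]-p^{\ell}$ is independent of the remaining factors and has mean zero). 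A kissing pair is the simplest counterexample to your criterion: $2\ell-2$ of its $2\ell-1$ distinct edges lie in exactly one of the two matchings, yet $\ex(Y_1Y_2)=p^{2\ell-1}-p^{2\ell}\neq 0$. Your stated criterion would therefore discard precisely the structures that carry the main term, and your later assertion that kissing pairs dominate contradicts your own reduction; as written, the expansion step on which everything else rests is invalid.

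The fix is to replace the criterion by ``no $M_i$ shares zero edges with $\bigcup_{j\ne i}M_j$'' (every component of the intersection graph of the tuple contains at least two of the chosen matchings), which is the paper's class $\Kcal$, and then run the switching comparison against the class $\Kcal'$ of kissing-pair/triple structures; the hypothesis $\ell=o(n\sqrt p)$ enters exactly there, as you anticipated. A secondary quantitative slip: $\bar\sigma^2$ is not $\Theta\big(n^{2\ell-1}p^{2\ell-1}(1-p)\big)$ even roughly; it is asymptotically the number of ordered kissing pairs times the covariance, $\bar\sigma^2\sim s\,d\,(p^{2\ell-1}-p^{2\ell})$ with $s=\binom{n}{2\ell}(2\ell-1)!!$ and $d\sim\ell\binom{n-2}{2\ell-2}(2\ell-3)!!$, and these factorial factors cannot be dropped when $\ell\to\infty$. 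The precise computations your sketch needs are the paper's even-$k$ count $|\Kcal'|\sim(k-1)!!\,(sd)^{k/2}$ and, for odd $k$, the bound $s\,d^2\,p^{3\ell-2}=o\big((sd(p^{2\ell-1}-p^{2\ell}))^{3/2}\big)$, which uses $\ell^2=o(n^2p)$ and $1-p=\Omega(1)$.
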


By Theorem~\ref{t:moments} and using the method of moments
(Theorem~\ref{t:mm} below), we immediately have the following theorem
for the subcritical case.

\begin{thm}
  \lab{t:sub} Suppose that $1-p = \Omega(1)$. For every positive
  $\ell=\ell(n)=o(n\sqrt{p})$,
  \begin{equation*}
    \frac{X_{n,\ell}-\lambda_{n,\ell}}{\sigma_{n,\ell}}
    \xrightarrow{d}
    \N(0,1), \ \ \mbox{as}\ n\to\infty,
  \end{equation*}
  where $\N(0,1)$ is the standard normal distribution.
\end{thm}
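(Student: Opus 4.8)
The plan is to deduce Theorem~\ref{t:sub} directly from Theorem~\ref{t:moments} by the method of moments; essentially all of the analytic and combinatorial work is already contained in Theorem~\ref{t:moments}, so this is a short deduction. First I would use the case $k=2$ of Theorem~\ref{t:moments} to pin down the normalisation: it gives $\var{X_{n,\ell}}=\ex((X_{n,\ell}-\la_{n,\ell})^2)=(1+o(1))\,1!!\,\bar\sigma_{n,\ell}^2=(1+o(1))\bar\sigma_{n,\ell}^2$, so $\sigma_{n,\ell}\sim\bar\sigma_{n,\ell}$. Since $\bar\sigma_{n,\ell}>0$, this in particular ensures $\sigma_{n,\ell}>0$ for all large $n$, so the normalised variable $Y_{n,\ell}:=(X_{n,\ell}-\la_{n,\ell})/\sigma_{n,\ell}$ is well defined.

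Next I would compute the moments of $Y_{n,\ell}$. For $k=1$ we have $\ex(Y_{n,\ell})=0$ trivially. For any fixed integer $k\ge 2$,
\[
\ex\big(Y_{n,\ell}^{\,k}\big)=\frac{\ex\big((X_{n,\ell}-\la_{n,\ell})^k\big)}{\sigma_{n,\ell}^{\,k}}
=\Big(\frac{\bar\sigma_{n,\ell}}{\sigma_{n,\ell}}\Big)^{\!k}\cdot\frac{\ex\big((X_{n,\ell}-\la_{n,\ell})^k\big)}{\bar\sigma_{n,\ell}^{\,k}},
\]
and the first factor tends to $1$ because $\sigma_{n,\ell}\sim\bar\sigma_{n,\ell}$. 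Feeding in Theorem~\ref{t:moments} then yields
\[
\ex\big(Y_{n,\ell}^{\,k}\big)\longrightarrow
\begin{cases}
(k-1)!! & \text{if $k$ is even},\\
0 & \text{if $k$ is odd},
\end{cases}
\qquad\text{as } n\to\infty,
\]
for every fixed $k\ge 1$.

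Finally, these limits are exactly the moments of the standard normal distribution $\N(0,1)$: its odd moments vanish and its $(2j)$-th moment is $(2j-1)!!$. Since $\N(0,1)$ is determined by its moment sequence (which satisfies Carleman's condition), the method of moments, Theorem~\ref{t:mm}, applies and gives $Y_{n,\ell}\xrightarrow{d}\N(0,1)$, which is precisely the assertion of Theorem~\ref{t:sub}. I do not expect any real obstacle here: the only point requiring a little care is replacing $\bar\sigma_{n,\ell}$ by $\sigma_{n,\ell}$ in the normalisation, and that is handled by the $k=2$ case above; the substantive content — the asymptotics of the central moments, in particular the identification of the dominant structures for even versus odd $k$ — lives entirely inside Theorem~\ref{t:moments}.
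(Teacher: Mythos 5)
Your proposal is correct and follows essentially the same route as the paper, which obtains Theorem~\ref{t:sub} immediately by applying the method of moments (Theorem~\ref{t:mm}, with $a_n=\bar\sigma_{n,\ell}$) to the central-moment asymptotics of Theorem~\ref{t:moments}. Your extra steps (extracting $\sigma_{n,\ell}\sim\bar\sigma_{n,\ell}$ from the case $k=2$ and recalling moment determinacy of $\N(0,1)$) merely unpack what Theorem~\ref{t:mm} already encapsulates, so there is no substantive difference.
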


\noindent{\bf Remark}: Note that condition $p=\omega(n^{-2})$ is implicit in Theorems~\ref{t:moments} and~\ref{t:sub} so that $\ell=o(n\sqrt{p})$ can be satisfied by some positive integer $\ell$.\ss  

The following result gives the asymptotic distribution of $X_{n,\ell}$ in the supercritical case.

\begin{thm}
  \lab{t:linear} Let $0<\eps<1/8$ be an arbitrarily small constant.
  Suppose that $1-p = \Omega(1)$ and $p \ge n^{-1/8+\eps}$. Then, for
  every positive $\ell=\ell(n)=\Omega(n\sqrt{p})$,
  $$
    \frac{\ln(e^{\beta_{n,\ell}^2/2} X_{n, \ell}/\la_{n,\ell})}
    {\beta_{n,\ell}}\xrightarrow{d}
    \N(0,1), \ \ \mbox{as}\ n\to\infty,
    $$
    where $\beta_{n,\ell}=\ell\sqrt{(1-p)/pN}$.
\end{thm}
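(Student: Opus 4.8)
\no\textbf{Proof strategy for Theorem~\ref{t:linear}.}
The plan is to deduce the theorem from the log-normal paradigm of~\cite[Theorems 1 and 3]{G6}, treating separately the two sub-ranges of $\ell=\Omega(n\sqrt p)$. When $\ell=n/2-O(n^{7/8+\eps})$ the assertion is exactly Theorem~\ref{t:nearperfect}, whose proof adapts the switching argument of~\cite{G4} for perfect matchings; this is the only regime in which the full hypothesis $p\ge n^{-1/8+\eps}$ is used. It remains to treat the \emph{intermediate} regime, where $\ell=\Omega(n\sqrt p)$ but $n/2-\ell$ is not $O(n^{7/8+\eps})$; there $p=\omega(n^{-1/2})$, which is all the argument needs.

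In the intermediate regime, verifying the hypotheses of~\cite[Theorems 1 and 3]{G6} reduces, in our setting, to sharp asymptotics for $N_j$, the number of \emph{ordered} pairs $(M_1,M_2)$ of $\ell$-matchings of $K_n$ sharing exactly $j$ edges, uniformly over $0\le j\le J_0$ for a slowly growing cutoff $J_0$. Write $T=\binom{n}{2\ell}(2\ell-1)!!$ for the number of $\ell$-matchings and let $J$ be the number of common edges of two independent uniformly random $\ell$-matchings, so that $\ex(X_{n,\ell}^2)/\la_{n,\ell}^2=\ex\!\big(p^{-J}\big)=\sum_{j\ge 0}(N_j/T^2)\,p^{-j}$ and $\mu:=\ex J=2\ell^2/(n(n-1))$. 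The target is
\[
\frac{N_j}{T^2}=(1+o(1))\,e^{-\mu}\frac{\mu^{j}}{j!}\quad\text{for }0\le j\le J_0,
\]
together with a geometric-type upper bound on $N_j/T^2$ when $j>J_0$; summing the series then gives $\ex(X_{n,\ell}^2)/\la_{n,\ell}^2=(1+o(1))\,e^{\mu(1-p)/p}=(1+o(1))\,e^{\beta_{n,\ell}^2}$ (note $\mu(1-p)/p=\beta_{n,\ell}^2$ exactly), and the concentration statements for $J$ obtained en route are what the paradigm needs to output the claimed log-normal limit.

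I would prove the estimate on $N_j$ by the switching method, but --- unlike in~\cite{G4} --- only a constant fraction of the vertices is covered by $M_1\cup M_2$ here, so the matchings cannot be treated as almost-spanning. The decisive auxiliary parameter is $t:=|V(M_1)\cap V(M_2)|$, the number of vertices incident to both matchings: each of the $2\ell$ vertices of $M_1$ lies in $V(M_2)$ with probability $2\ell/n$, so $t$ concentrates around $\bar t:=4\ell^2/n\to\infty$. First I would establish this concentration, then refine $N_j$ to $N_{j,t}$, the count of pairs with overlap $j$ \emph{and} $|V(M_1)\cap V(M_2)|=t$, for $t$ within an $O(\sqrt{\bar t}\,\mathrm{polylog}\,n)$ window of $\bar t$. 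For $j\ge 1$ the switching deletes one shared edge $uv$ from $M_2$ and reinserts an edge on two vertices not covered by $M_2$ and not forming an edge of $M_1$, yielding a pair $(M_1,M_2')$ of overlap $j-1$; balancing the forward operations (about $j\binom{n-2\ell}{2}$ of them) against the inverse ones (whose number is controlled by how many edges of $M_1$ lie entirely outside $V(M_2)$, hence by $t$) produces an explicit main term for $N_{j,t}/N_{j-1,t'}$ with $t'$ within $2$ of $t$. Iterating down to $j=0$ and summing over the admissible $t$ recovers $N_j$ to the needed precision, while crude switching bounds handle $j>J_0$.

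The main obstacle is precisely this switching analysis: one must design operations that preserve the matching property of $M_2$, discount edges accidentally landing inside $M_1$ and endpoints accidentally reused, and --- the crux --- propagate the extra parameter $t$ (equivalently, control how many of the two reinserted endpoints fall in $V(M_1)$) through the recursion with errors small enough that the constant $e^{-\mu}\mu^{j}/j!$, and not merely its order of magnitude, survives the double summation over $j$ and $t$. A secondary difficulty appears only as $\ell$ approaches the near-perfect range, where $\beta_{n,\ell}^2\to\infty$ and the Poisson-type estimate must therefore hold up to a growing cutoff $J_0$ of order $\beta_{n,\ell}^2$ --- a (mild) large-deviation statement for $J$; this is exactly the point at which one would need the stronger assumption $p\ge n^{-1/8+\eps}$, which is why that range is handled via Theorem~\ref{t:nearperfect} instead.
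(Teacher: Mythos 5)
Your proposal follows essentially the same route as the paper: Theorem~\ref{t:linear} is obtained by stitching Theorem~\ref{t:nearperfect} (with $\alpha$ slightly above $7/8$) to an intermediate-range argument that is exactly the paper's Theorem~\ref{t:super} --- an application of the log-normal paradigm of~\cite{G6} after a switching analysis of the overlap counts $f_j$, refined by the vertex-overlap parameter (your $t$, the paper's $n_2$) concentrated near $4\ell^2/n$, together with a tail bound for large $j$. The only small slip is attributing the need for $p\ge n^{-1/8+\eps}$ to the growing cutoff $J_0\asymp\beta_{n,\ell}^2$ (that cutoff occurs throughout the supercritical range whenever $p\to 0$ and is handled with just $p=\omega(n^{-1/2})$); in the paper it is rather the switching error terms involving $n-2\ell$ that force the near-perfect range to be treated separately via Theorem~\ref{t:nearperfect}.
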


Immediately, we have the following corollary of Theorems~\ref{t:sub} and~\ref{t:linear}.

\begin{cor}
  \lab{c:main} Let $0<\eps<1/8$ be fixed.  Suppose that $1-p
  = \Omega(1)$ and $p \ge n^{-1/8+\eps}$. For any integer $\ell
  = \ell(n)\in[1, n/2]$,
  \begin{itemize}
  \item[(i)] if $\ell = o(n\sqrt{p})$, then $X_{n,\ell}$ is asymptotically normally distributed;
  \item[(ii)] if $\ell = \Omega(n\sqrt{p})$, then
    $X_{n,\ell}$ is asymptotically log-normally distributed.
  \end{itemize}
\end{cor}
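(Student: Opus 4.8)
The corollary is essentially a packaging of Theorems~\ref{t:sub} and~\ref{t:linear}, so the proof is short: I would simply invoke each theorem on the appropriate range of $\ell$ and observe that the stated convergence is exactly the assertion of asymptotic normality (resp.\ log-normality). Part~(i) is immediate: under the hypothesis $1-p=\Omega(1)$, Theorem~\ref{t:sub} says $(X_{n,\ell}-\lambda_{n,\ell})/\sigma_{n,\ell}\xrightarrow{d}\N(0,1)$ whenever $\ell=o(n\sqrt p)$, and by definition a sequence of random variables is \emph{asymptotically normally distributed} if, after centering by the mean and scaling by the standard deviation, it converges in distribution to $\N(0,1)$. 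Since the extra hypothesis $p\ge n^{-1/8+\eps}$ in the corollary only strengthens the assumptions of Theorem~\ref{t:sub}, there is nothing further to check.

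For part~(ii), I would apply Theorem~\ref{t:linear}, whose hypotheses ($1-p=\Omega(1)$, $p\ge n^{-1/8+\eps}$, and $\ell=\Omega(n\sqrt p)$) match the corollary's exactly. Theorem~\ref{t:linear} gives
\[
\frac{\ln\!\paren[\big]{e^{\beta_{n,\ell}^2/2}X_{n,\ell}/\lambda_{n,\ell}}}{\beta_{n,\ell}}\xrightarrow{d}\N(0,1),
\]
with $\beta_{n,\ell}=\ell\sqrt{(1-p)/pN}$. Unwinding this, $\ln X_{n,\ell}=\ln\lambda_{n,\ell}-\beta_{n,\ell}^2/2+\beta_{n,\ell}Z_n$ where $Z_n\xrightarrow{d}\N(0,1)$; that is, $\ln X_{n,\ell}$ is asymptotically normal with mean $\ln\lambda_{n,\ell}-\beta_{n,\ell}^2/2$ and variance $\beta_{n,\ell}^2$, which is precisely the statement that $X_{n,\ell}$ is asymptotically log-normally distributed. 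One should note that $\ell=\Omega(n\sqrt p)$ forces $\beta_{n,\ell}=\Omega(1)$ (indeed $\beta_{n,\ell}^2=\ell^2(1-p)/(pN)=\Theta(\ell^2/(pn^2))$), so the scaling is nondegenerate and the limiting log-normal law is genuinely non-Gaussian; this is the point at which the transition between the two regimes is visible.

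Since every integer $\ell\in[1,n/2]$ falls into exactly one of the two cases $\ell=o(n\sqrt p)$ or $\ell=\Omega(n\sqrt p)$ (these are complementary for any deterministic sequence $\ell=\ell_n$), the two parts together cover all admissible $\ell$, and the corollary follows. There is no real obstacle here — the only mild subtlety is bookkeeping the definitions of ``asymptotically normal/log-normal'' so that the conclusions of Theorems~\ref{t:sub} and~\ref{t:linear} are recognized as those statements verbatim; all the genuine work is in proving those two theorems (and Theorem~\ref{t:moments} behind the first of them). \qed
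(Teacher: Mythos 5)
Your proposal is correct and takes essentially the same route as the paper, which states the corollary as an immediate consequence of Theorems~\ref{t:sub} and~\ref{t:linear} exactly as you do. (One small aside: your claim that every sequence $\ell_n$ satisfies either $\ell=o(n\sqrt{p})$ or $\ell=\Omega(n\sqrt{p})$ is not literally true --- an oscillating sequence can satisfy neither --- but the corollary is stated conditionally on one of these hypotheses, so no such dichotomy is needed and the slip is harmless.)
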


Theorem~\ref{t:linear} is implied by stitching together the following
two theorems, which give more
precise description of the distribution of $X_{n,\ell}$ (with wider
ranges of $p$ than in Theorem~\ref{t:linear}).

\begin{thm}
  \lab{t:super} Let $\alpha \in (7/8, 1)$ be fixed and suppose that
  $np\to \infty$ and $1-p=\Omega(1)$. Then, for every
  positive $\ell = \ell(n) =\Omega(n\sqrt{p})$
  such that $ \ell \le n/2 - n^{\alpha}$ and $ \ell^3 = o(n^4p^2)$, we
  have
  $$
  \frac{\ln(e^{\beta_{n,\ell}^2/2} X_{n,\ell}/\la_{n,\ell})}{\beta_{n,\ell}}\xrightarrow{d}
  \N(0,1) \ \ \mbox{as}\ n\to\infty,
  $$
  where $\beta_{n,\ell}=\ell\sqrt{(1-p)/pN}$ and $\N(0,1)$ is the
  standard normal distribution.
\end{thm}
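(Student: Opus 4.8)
We obtain Theorem~\ref{t:super} by verifying the hypotheses of the log-normal paradigm of~\cite[Theorems 1 and 3]{G6} for $X=X_{n,\ell}$. Write $X=\sum_{M}\mathbf 1[M\subseteq\Gnp]$, where $M$ runs over the $R:=\binom{n}{2\ell}(2\ell-1)!!$ many $\ell$-matchings of $K_n$, so that $\la_{n,\ell}=Rp^{\ell}$; since $\mean{\mathbf 1[M_1\subseteq\Gnp]\mathbf 1[M_2\subseteq\Gnp]}=p^{\,2\ell-|M_1\cap M_2|}$, the relevant statistics are, for each integer $j\ge0$, the normalised pair-overlap quantities $\rho_j:=N_j/(R^2p^{\,j})$, where $N_j$ counts the ordered pairs of $\ell$-matchings of $K_n$ sharing exactly $j$ edges; note $\mean{X^2}/\la_{n,\ell}^2=\sum_{j\ge0}\rho_j$. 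What the paradigm needs is: (a) $\la_{n,\ell}\to\infty$, which is immediate, as is $\beta_{n,\ell}=\Omega(1)$ from $\ell=\Omega(n\sqrt p)$ and $1-p=\Omega(1)$, so the limit is genuinely log-normal; (b) that $\rho_j$ matches, term by term up to a cutoff $j_0$, the profile prescribed by a log-normal with parameter $\beta_{n,\ell}$, and that $\sum_{j>j_0}\rho_j$ is negligible; and (c) a structural ``genericity'' condition on the pairs counted by $N_j$. Part (c) is the crux.

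For (b), I would first record the exact count of ordered pairs sharing at least $t$ edges: picking $M_1$, then $t$ of its edges to be shared, then completing $M_2$ on the remaining $n-2t$ vertices gives $A_t:=R\binom{\ell}{t}\binom{n-2t}{2\ell-2t}(2\ell-2t-1)!!$, and cancelling factorials turns this into $A_t/R^2=2^{\,t}(\ell)_t^2/\bigl(t!\,(n)_{2t}\bigr)$, with $(x)_k:=x(x-1)\cdots(x-k+1)$. Inclusion--exclusion gives $N_j=\sum_{t\ge j}(-1)^{t-j}\binom{t}{j}A_t$, and writing $x:=\ell^2/(Np)$ one collects terms to get $\rho_j=(1+o(1))\,x^{\,j}e^{-xp}/j!$ uniformly for $j\le j_0$, the profile required by the paradigm, so that $\sum_{j}\rho_j=(1+o(1))e^{\,x(1-p)}=(1+o(1))e^{\,\beta_{n,\ell}^2}$, which is indeed the second moment of the claimed log-normal. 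The estimate $(\ell)_t^2/(n)_{2t}\sim(\ell/n)^{2t}$ underlying this requires $t=o(\sqrt\ell)$, whereas making $\sum_{j>j_0}\rho_j$ negligible requires $j_0$ comfortably larger than the ``Poisson parameter'' $x=\Theta(\beta_{n,\ell}^2)$; both hold simultaneously exactly when $\beta_{n,\ell}^2=o(\sqrt\ell)$, which (given $1-p=\Omega(1)$) is equivalent to the hypothesis $\ell^3=o(n^4p^2)$ — one then fixes any $j_0$ with $\beta_{n,\ell}^2\ll j_0\ll\sqrt\ell$ and controls the tail by crude upper bounds on $\rho_j$.

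The main obstacle is condition (c): one must show that a typical ordered pair $(M_1,M_2)$ with $|M_1\cap M_2|=j$ looks like two essentially independent $\ell$-matchings conditioned on their $j$ common edges, which comes down to controlling $s:=|V(M_1)\cap V(M_2)|$ — the number of vertices incident to both matchings — together with the path/even-cycle structure of $M_1\triangle M_2$, which consists of $2\ell-s$ paths and some number of even cycles. Unlike the perfect-matching case of~\cite{G4}, where $s=n$ identically, here $2\ell$ can be as close to $n$ as $n-\Theta(n^{\alpha})$, so $s\ge4\ell-n$ is large and fluctuates. The plan is to run two families of switchings on pairs: one that deletes a common edge and independently re-matches its four freed endpoints within $M_1$ and within $M_2$ (relating $N_j$ to $N_{j-1}$ and reconfirming the profile from (b)), and one that, with $j$ held fixed, re-matches a covered vertex to an uncovered one inside a single matching so as to change $s$. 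Counting forward and reverse switchings — whose multiplicities depend on $s$ and on how many of the freed vertices lie in $V(M_1)\cap V(M_2)$, which is exactly why the structure must be pinned down — yields the conditional law of $(j,s)$ over a uniformly random pair, shows that $s$ concentrates tightly enough for the estimates to hold uniformly over $j\le j_0$, and keeps the accumulated relative error $o(1)$, again by $\ell^3=o(n^4p^2)$ and $n-2\ell=\Omega(n^{\alpha})$. It is precisely the breakdown of this $s$-concentration (and the proliferation of degenerate symmetric-difference structures) for $\ell$ within $O(n^{7/8+\eps})$ of $n/2$ that forces the restriction $\ell\le n/2-n^{\alpha}$, the complementary range being treated in Theorem~\ref{t:nearperfect}. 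With (a), (b) and (c) established, \cite[Theorems 1 and 3]{G6} give the stated convergence; I expect the bookkeeping for the $s$-changing switchings, and the propagation of their errors through the cutoff $j_0$, to be the most delicate part.
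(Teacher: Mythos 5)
Your high-level framework is the paper's: both proofs run Theorem~\ref{t:Gnm} and Theorem~\ref{t:Gnp} (i.e.\ \cite[Theorems 1 and 3]{G6}) on the overlap counts $f_j$, and your two switching families are exactly the paper's (the shared-edge switching of Lemma~\ref{l:nearperfect-ratio}/Lemma~\ref{l:super-ratio} and the vertex-overlap switching of Lemma~\ref{l:super-formula}, which pins down the typical $n_2=|V(M_1)\cap V(M_2)|\approx z(i)+2i$). The genuine gap is in how you propose to discharge the hypotheses of the paradigm. The actual conditions are: (a) a ratio estimate $f'_j/f'_{j-1}=\frac{\ell^2}{Nj}\bigl(1+o(m/\ell^2)\bigr)$ for $j\le K\ell^2/m$, (b) the bound $f'_j/f'_{j-1}\le m/2N$ on the \emph{whole} middle range $4\ell^2/m\le j\le\gamma(n)$, and (c) $\sum_{j>\gamma(n)}f_j=o(\mu_n|\S|)$, where $\mu_n|\S|\approx|\S|^2(m/N)^{\ell}e^{-\Theta(\beta_{n,\ell}^2)}$ is exponentially small in $\ell$ relative to $|\S|^2$. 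Your plan collapses (b)+(c) into a single cutoff $j_0$ with $\beta_{n,\ell}^2\ll j_0\ll\sqrt{\ell}$ and ``crude upper bounds'' on the tail, but the tail condition at such a cutoff is false: by the very Poisson-type profile you derive, $f_j$ at $j\asymp\sqrt{\ell}$ is of order $|\S|^2\frac{(\ell^2/N)^j}{j!}e^{-\Theta(1)}=|\S|^2e^{-\Theta(\sqrt{\ell}\log\ell)}$, whereas for, say, $p$ a constant in $(0,1)$ and $\ell=\Theta(n)$ one has $\mu_n|\S|=|\S|^2e^{-\Theta(\ell)}$, so $\sum_{j>j_0}f_j\gg\mu_n|\S|$. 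The tail you do check, $\sum_{j>j_0}\rho_j=o(e^{\beta_{n,\ell}^2})$, is only the $\Gnp$ second-moment tail, and second-moment control in $\Gnp$ cannot yield the log-normal limit (indeed $\ex X^2/(\ex X)^2\to e^{\beta_{n,\ell}^2}\neq 1$); the paradigm works in $\G(n,m)$ and its cutoff must be taken near $\ell$ (the paper uses $\gamma(n)=9\ell/10$, and Lemma~\ref{l:tail} genuinely needs $\delta>4/5$). Consequently the entire range $j_0<j\le\Theta(\ell)$ is unaddressed in your proposal; in the paper this is precisely what forces the switching-based ratio formula to be proved uniformly up to $j\le 9\ell/10$ (Lemmas~\ref{l:super-formula} and~\ref{l:super-ratio}) and then combined with the monotonicity of $2(\ell-j)^2/\bigl(j(n-2j)^2\bigr)$ to verify condition (b).

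Two smaller points. First, condition (a) requires relative error $o(m/\ell^2)=o(n^2p/\ell^2)$, which can tend to $0$; a term-by-term $(1+o(1))$ identification of $f_j$ only suffices because the theorem lets you take $f'_j$ to be a clean closed form (your exact Poisson-profile formula, or the paper's $f'_j$ built from $f(j,z(j)+2j)$), so that the ratio is computed exactly from $f'_j$ --- your inclusion--exclusion route can be made to work for $j\le K\ell^2/m$ (with Bonferroni truncation, using $\ell^2/N\le 1/2$ and $j^2/\ell=O(\ell^3/(n^4p^2))=o(1)$, which is indeed where $\ell^3=o(n^4p^2)$ enters), but it does not extend to the middle range where the profile is no longer Poisson. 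Second, the role of $\ell\le n/2-n^{\alpha}$ with $\alpha>7/8$ is not a qualitative ``breakdown of concentration'' of $n_2$ but quantitative: the error terms $O\bigl(n/(\ell(n-2\ell))\bigr)$ and $O\bigl(n/(n-2\ell)^2\bigr)$ produced by the $n_2$-analysis must be $o(n^2p/\ell^2)$ to meet condition (a), and this is exactly where $n-2\ell\ge n^{\alpha}$, $\alpha>7/8$, is used in combination with $\ell^3=o(n^4p^2)$. So the structural ideas are right, but as written the argument does not verify the hypotheses of the theorem it invokes.
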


\begin{thm}
  \lab{t:nearperfect}
  Let $\alpha \in(1/2,1)$ be fixed. Suppose 
  that $pn^{1-\alpha}\to\infty$ and $1-p=\Omega(1)$. Then, for every
  positive $\ell= \ell(n) = n/2-O(n^{\alpha})$,
  $$
  \frac{\ln(e^{\beta_{n,\ell}^2/2} X_{n,\ell}/\la_{n,\ell})}{\beta_{n,\ell}}\xrightarrow{d}
  \N(0,1) \ \ \mbox{as}\ n\to\infty,
  $$
  where $\beta_{n,\ell}=\ell\sqrt{(1-p)/pN}$ and $\N(0,1)$ is the
  standard normal distribution.
\end{thm}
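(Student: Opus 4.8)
The plan is to deduce Theorem~\ref{t:nearperfect} from the log-normal paradigm of~\cite[Theorems 1 and 3]{G6}. Write $X=X_{n,\ell}=\sum_{M}I_{M}$, where $M$ ranges over all $\ell$-matchings of $K_{n}$ and $I_{M}$ indicates that all $\ell$ edges of $M$ are present in $\Gnp$, so that $\mathbf{E}(I_{M}I_{M'})=p^{2\ell-j}$ whenever $M$ and $M'$ share exactly $j$ edges. The quantity to control is then $N_{j}$, the number of ordered pairs $(M_{1},M_{2})$ of $\ell$-matchings of $K_{n}$ with exactly $j$ common edges, for $j$ over a range $0\le j\le J$ with a suitable slowly growing $J=J(n)$, together with a cruder tail bound for $j>J$. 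The target estimate is
\[
N_{j}=(1+o(1))\,\Big(\binom{n}{2\ell}(2\ell-1)!!\Big)^{2}e^{-\ell^{2}/N}\,\frac{(\ell^{2}/N)^{j}}{j!}\qquad\text{uniformly for }0\le j\le J,
\]
i.e.\ the number of edges shared by two independent uniformly random $\ell$-matchings is asymptotically $\mathrm{Poisson}(\ell^{2}/N)$. Substituting this into $\mathbf{E}(X^{2})=\sum_{j}p^{2\ell-j}N_{j}$, together with $\la_{n,\ell}^{2}=\binom{n}{2\ell}^{2}((2\ell-1)!!)^{2}p^{2\ell}$, gives $\mathbf{E}(X_{n,\ell}^{2})/\la_{n,\ell}^{2}\to e^{\beta_{n,\ell}^{2}}$, as required by the paradigm. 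This follows the template of~\cite{G4} for perfect matchings; the one genuinely new ingredient is the $n-2\ell=O(n^{\alpha})$ vertices that an $\ell$-matching leaves exposed, which must be tracked throughout.

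The estimate of $N_{j}$ is obtained by the switching method applied to the ratios $N_{j}/N_{j-1}$. Given a pair $(M_{1},M_{2})$ with exactly $j$ common edges, a forward switching picks a common edge $uv$, an edge $xy\in M_{1}\setminus M_{2}$ and an edge $zw\in M_{2}\setminus M_{1}$ with $\{x,y,z,w\}\cap\{u,v\}=\emptyset$, removes $uv,xy$ from $M_{1}$ and reinserts $ux,vy$, and symmetrically removes $uv,zw$ from $M_{2}$ and reinserts $uz,vw$; unless a coincidence is created (a ``bad'' switching) the result is a pair of $\ell$-matchings with exactly $j-1$ common edges. A pair in the source admits $(1+o(1))\,4j\ell^{2}$ forward switchings (choices of $uv$, of $xy$, of $zw$, and of the two reconnections), while a pair in the target is produced by $(1+o(1))N$ inverse switchings (choosing the unordered pair $\{u,v\}$ of vertices to be re-joined forces $x,y,z,w$ and the reconnection, and almost every vertex is covered by both matchings), so $N_{j}/N_{j-1}=(1+o(1))\,N/(4j\ell^{2})=(1+o(1))\,\ell^{2}/(jN)$ (the two agree since $N=(1+o(1))2\ell^{2}$ in this range), and iterating from $j=0$ gives the displayed formula; the exposed vertices enter only through lower-order corrections. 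For the tilted sum $\mathbf{E}(X^{2})/\la_{n,\ell}^{2}=\sum_{j}p^{-j}N_{j}/\big(\binom{n}{2\ell}(2\ell-1)!!\big)^{2}$ to converge to $e^{\beta_{n,\ell}^{2}}$ the estimate must stay valid up to $J$ of order roughly $p^{-1}\log n$, since the summand $p^{-j}(\ell^{2}/N)^{j}/j!$ is maximised near $j\approx\ell^{2}/(pN)\asymp p^{-1}$. Writing $N_{j}/N_{j-1}=(\ell^{2}/jN)(1+\delta_{j})$, the hypothesis $p^{2}n\to\infty$ is what makes the ``internal'' cumulative error, of order $J^{2}/n$, vanish, while $pn^{1-\alpha}\to\infty$ (equivalently $n-2\ell=o(pn)$) is what makes the cumulative exposed-vertex error, of order $J(n-2\ell)/n$, vanish over the $\approx p^{-1}$ switching steps.

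The main obstacle is precisely this \emph{uniformity}: one must bound the number of bad switchings---those creating a parallel edge, an unintended new common edge, or a degenerate interaction with an exposed vertex---and of the switchings that touch exposed vertices, with bounds uniform over $1\le j\le J$, $J\asymp p^{-1}\log n$, strong enough that $\sum_{j\le J}|\delta_{j}|=o(1)$, so that the corresponding contributions remain negligible even after multiplication by $p^{-j}$. A secondary, lighter task is to verify the remaining hypotheses of~\cite[Theorems 1 and 3]{G6} beyond the ratio $\mathbf{E}(X_{n,\ell}^{2})/\la_{n,\ell}^{2}\to e^{\beta_{n,\ell}^{2}}$ (in~\cite{G4} these amounted to analogous but easier second-moment bookkeeping together with the tail bound on $N_{j}$), redone here while carrying the $O(n^{\alpha})$ exposed vertices along and using $p^{2}n\to\infty$. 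Once all these estimates are in hand, Theorem~\ref{t:nearperfect} follows directly from~\cite[Theorems 1 and 3]{G6}.
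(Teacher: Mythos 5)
Your core technical plan coincides with the paper's: the switching you describe (open up a shared edge $uv$ against one non-shared edge of each matching) is exactly the switching of Lemma~\ref{l:nearperfect-ratio}, giving $f_j/f_{j-1}=\frac{n^2}{8j\ell^2}(1+O(j/n+n^{\alpha-1}))\approx \ell^2/(jN)$, and together with a crude bound on the far tail (Lemma~\ref{l:tail}) this is fed into the two theorems from~\cite{G6} (Theorems~\ref{t:Gnm} and~\ref{t:Gnp}). However, two steps do not work as you wrote them. First, the uniformity range: you demand the Poisson-type asymptotics for $N_j$ with relative error $1+o(1)$ uniformly up to $J\asymp p^{-1}\log n$, and you claim the cumulative errors, which you yourself identify as of order $J^2/n$ and $J(n-2\ell)/n$, vanish under $p^2n\to\infty$ and $pn^{1-\alpha}\to\infty$. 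With $J\asymp p^{-1}\log n$ these are of order $\log^2 n/(p^2n)$ and $\log n/(pn^{1-\alpha})$, which need not be $o(1)$ under the stated hypotheses (e.g.\ $p^2n=\log n$). The hypotheses are calibrated precisely to the range $j\le K\rho(n)$ with $\rho(n)=\ell^2/m\asymp p^{-1}$ and $K$ an arbitrary \emph{fixed} constant, which is all that condition (a) of Theorem~\ref{t:Gnm} requires (per-ratio error $o(m/\ell^2)=o(p)$, i.e.\ $1/(pn)=o(p)$ and $n^{\alpha-1}=o(p)$); for $4\rho(n)\le j\le 9\ell/10$ only the one-sided bound $r_j\le m/2N$ (condition (b)) is needed, and beyond $9\ell/10$ the crude count of Lemma~\ref{l:tail}. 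So your plan is repairable, but only by splitting the ranges exactly as the tool does; the asserted uniform asymptotics out to $p^{-1}\log n$ is both unnecessary and not a consequence of the hypotheses.

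Second, the reduction itself is misstated: proving $\ex(X_{n,\ell}^2)/\la_{n,\ell}^2\to e^{\beta_{n,\ell}^2}$ in $\G(n,p)$ is not ``as required by the paradigm.'' The hypotheses of~\cite[Theorem 1]{G6} (Theorem~\ref{t:Gnm} here) are the ratio conditions (a)--(c) on the $f_j$, which are used to prove concentration $X_{n,\ell}/\ex_{\G(n,m)}(X_{n,\ell})\xrightarrow{p}1$ in $\G(n,m)$ for every $m=pN+O(\sqrt{pN})$; \cite[Theorem 3]{G6} (Theorem~\ref{t:Gnp}) then transfers this to the log-normal limit in $\G(n,p)$. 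The $\G(n,p)$ second-moment limit $e^{\beta_{n,\ell}^2}$ is a consequence of these estimates, not the hypothesis, and on its own it cannot identify the limiting distribution (this is precisely the problem-of-moments obstruction the paper invokes to explain why the method of moments is abandoned in the supercritical regime). Since the estimates you actually propose to prove---the switching ratio and the tail bound---are what conditions (a)--(c) consume, this is a fixable misdescription rather than a wrong method, but as written the step ``second moment, hence apply~\cite{G6}'' is not a valid deduction and the verification of (a)--(c) in the correct ranges must replace it.
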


\noindent {\bf Remark}: Theorems~\ref{t:super} and~\ref{t:nearperfect}
deal with the case $\ell=\Omega(n\sqrt{p})$. Note that the condition $\ell^3 =
o(n^4p^2)$ in Theorem~\ref{t:super} is weaker than the condition $p^2
n\to\infty$. Indeed, assuming $p^2n \to \infty$, we have that
  $\ell^3 \leq n^3 = o(n^3 \cdot p^2n)$. Thus, the condition $p\ge
n^{-1/8+\eps}$ in Theorem~\ref{t:linear} is only used while applying
Theorem~\ref{t:nearperfect} (by taking $\alpha=7/8+\eps$) and it is
likely that it may be weakened to $p^2 n\to\infty$, as
we conjectured in the introduction.

\section{Proof of Theorems~\ref{t:moments} and~\ref{t:sub}}
\lab{s:sub}

Theorem~\ref{t:sub} follows from Theorem~\ref{t:moments} and the
following theorem, known as the method of moments.
\begin{thm}[Corollary 6.3 in~\cite{JLR}]
  \lab{t:mm}
  If $Z_1,Z_2,\dotsc$ are random variables with finite moments and
  $a_n$ are positive numbers such that, for fixed integer $k\geq 2$,
  as $n\to \infty$,
  \begin{equation*}
    \mean{(Z_n - \ex Z_n)^k} =
    \begin{cases}
      (k-1)!! a_n^k + o(a_n^k), &\text{if }k\text{ is even};\\
      o(a_n^k),&\text{if }k\text{ is odd};
    \end{cases}
  \end{equation*}
  then $(Z_n-\ex Z_n)/\sqrt{\var{Z_n}} \xrightarrow{d}
  \N(0,1)$.
\end{thm}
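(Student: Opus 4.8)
The plan is to reduce the statement to moment-convergence to the standard normal law together with the classical fact that the normal law is determined by its moments. Set $W_n := (Z_n - \mean{Z_n})/\sqrt{\var{Z_n}}$, which is well defined for $n$ large enough (see below) and satisfies $\mean{W_n}=0$ and $\mean{W_n^2}=1$. First I would apply the hypothesis with $k=2$: it gives $\var{Z_n}=\mean{(Z_n-\mean{Z_n})^2}=(1+o(1))\,a_n^2$, so in particular $\var{Z_n}>0$ eventually and $(\var{Z_n})^{k/2}=(1+o(1))\,a_n^k$ for every fixed $k$. Dividing the hypothesis for a general fixed $k\ge 2$ through by $(\var{Z_n})^{k/2}$ then yields, as $n\to\infty$,
\begin{equation*}
\mean{W_n^k}\longrightarrow
\begin{cases}
(k-1)!!,& k\text{ even},\\
0,& k\text{ odd};
\end{cases}
\end{equation*}
the degenerate cases $k=0$ and $k=1$ are immediate. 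Thus for every fixed nonnegative integer $k$ the $k$-th moment of $W_n$ converges to the $k$-th moment $\mu_k$ of $\N(0,1)$.

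Next I would invoke the determinacy of the Gaussian moment problem. This follows from Carleman's condition: with $\mu_{2k}=(2k-1)!!=(2k)!/(2^k k!)\le (2k)^k$ one has $\mu_{2k}^{1/(2k)}\le\sqrt{2k}$, hence $\sum_{k\ge 1}\mu_{2k}^{-1/(2k)}=\infty$, so $\N(0,1)$ is uniquely determined by the sequence $(\mu_k)$. (Equivalently, the moment generating function $t\mapsto e^{t^2/2}$ of $\N(0,1)$ is finite on all of $\setR$, which already forces determinacy.) By the Fr\'echet--Shohat moment-convergence theorem --- a sequence of laws whose moments converge to those of a moment-determinate law converges weakly to that law --- we conclude $W_n\xrightarrow{d}\N(0,1)$, which is the assertion, since $W_n=(Z_n-\mean{Z_n})/\sqrt{\var{Z_n}}$.

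For completeness one can spell out the Fr\'echet--Shohat step, which is the only point requiring any care. Since $\mean{W_n^2}\to 1$, the second moments are bounded, so the laws of $W_n$ are tight and, by Helly's selection theorem, every subsequence has a further subsequence along which $W_n$ converges weakly to some probability measure $\nu$. The even moments $\mean{W_n^{2m}}$ converge and are therefore bounded, giving the uniform integrability of $\{W_n^k\}$ needed to pass each fixed moment to the limit along that subsequence; hence $\int x^k\,d\nu=\mu_k$ for all $k$, and by determinacy $\nu=\N(0,1)$. As every subsequential weak limit equals $\N(0,1)$, the full sequence $W_n$ converges in distribution to $\N(0,1)$. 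The main obstacle is thus purely expository --- deciding how much of this textbook machinery to reproduce versus simply cite, since \cite{JLR} packages it as Corollary~6.3 and both the Fr\'echet--Shohat theorem and the determinacy of the Gaussian moment problem are standard.
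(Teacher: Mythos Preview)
Your argument is correct and is the standard route: use $k=2$ to identify $\var{Z_n}\sim a_n^2$, divide through to get $\mean{W_n^k}\to\mu_k$ for all $k$, then invoke moment-determinacy of the Gaussian together with Fr\'echet--Shohat (tightness plus subsequential limits). There is nothing to compare against here, however: the paper does not supply its own proof of this statement but simply quotes it as Corollary~6.3 of~\cite{JLR} and uses it as a black box to pass from Theorem~\ref{t:moments} to Theorem~\ref{t:sub}. Your write-up is exactly the kind of justification one would give if asked to unpack that citation.
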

We proceed to prove Theorem~\ref{t:moments}. Let $\Mcal =
\set{M_1,\dotsc, M_s}$ be the set of $\ell$-matchings in $K_{[n]}$,
where $K_{[n]}$ is the complete graph on $[n]$. Thus,
by~\eqn{sizeOfs}, $s=\binom{n}{2\ell}(2\ell-1)!!$. Let $\HM$ denote
the graph on $\Mcal$ such that a matching $M$ is adjacent to another
matching $M'$ if and only if $M\cap M' \neq\varnothing$. Let $k\ge 1$
be any fixed integer. For any $(i_1,\dotsc, i_k)\in[s]^k$, let
$\HM(i_1,\dotsc, i_k)$ be the subgraph of $\HM$ induced by
$\set{M_{i_1},\dotsc, M_{i_k}}$ and let $\Ccal(i_1,\dotsc, i_k)$
denote the set of components of $H_{\Mcal}(i_1,\dotsc, i_k)$. For
$C\in \Ccal(i_1,\dotsc, i_k)$, let ${\check n}_C = \card{\set{j:
    M_{i_j}\in V(C)}}$ and ${\check m}_C = \card{\bigcup_{M\in C} M}$.
That is, ${\check n}_C$ is the number of matchings in $C$ (counting
repetitions), whereas ${\check m}_C$ counts edges in the union of the
matchings in $C$. We will use $\iseq$ to denote $(i_1,\dotsc, i_k)$
and $C(\iseq)$ to denote $C(i_1,\dotsc, i_k)$.

Two matchings are called a {\em kissing pair} if they share exactly
one edge. An ordered triple of matchings $(M_1,M_2,M_3)$ is
called a {\em chained triple} if
$\card{M_{1}\cap M_{2}}=1$ and $\card{M_{2}\cap M_{3}} = 1$ and
$\card{M_{1}\cap M_{3}} = 0$. A set of matchings $\set{M_1,\dotsc,
  M_t}$ of size $t$ is called a {\em flower with $t$ petals} if there
exists an edge $e$ such that $M_i\cap M_j = \set{e}$ for any distinct
$i,j\in[k]$. Hence, a flower with two petals is a kissing pair.

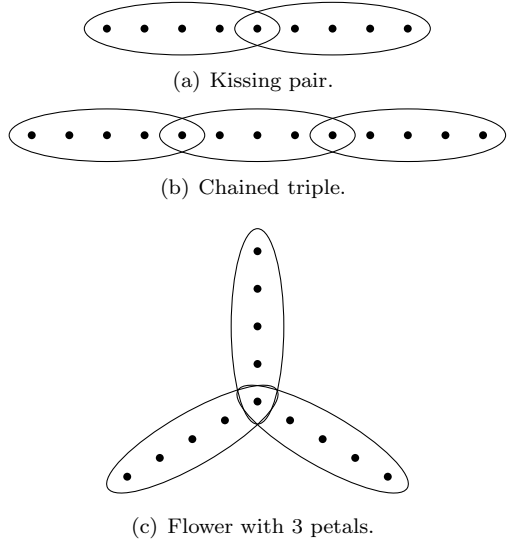
\begin{figure}
\centering
\subfigure[Kissing pair.]{
  \begin{tikzpicture}
    \def \initial {0};
    \def \step {0.5};
    \def \rad {1.5pt};
    \def \radv {3.5mm};
    \def \radh {13mm};

    \coordinate (a) at (\initial,0);
    \coordinate (b) at (\initial+\step,0);
    \coordinate (c) at (\initial+2*\step,0);
    \coordinate (d) at (\initial+3*\step,0);
    \coordinate (e) at (\initial+4*\step,0);
    \coordinate (f) at (\initial+5*\step,0);
    \coordinate (g) at (\initial+6*\step,0);
    \coordinate (h) at (\initial+7*\step,0);
    \coordinate (i) at (\initial+8*\step,0);

    \foreach \point in {a,b,c,d,e,f,g,h,i} \fill [black] (\point)
    circle (\rad);

    \draw (c) ellipse ({\radh} and {\radv});
    \draw (g) ellipse ({\radh} and {\radv});
  \end{tikzpicture}}
\hspace{1cm}
\subfigure[Chained triple.]{
  \begin{tikzpicture}
    \def \initial {0};
    \def \step {0.5};
    \def \rad {1.5pt};
    \def \radv {3.5mm};
    \def \radh {13mm};

    \coordinate (a) at (\initial,0);
    \coordinate (b) at (\initial+\step,0);
    \coordinate (c) at (\initial+2*\step,0);
    \coordinate (d) at (\initial+3*\step,0);
    \coordinate (e) at (\initial+4*\step,0);
    \coordinate (f) at (\initial+5*\step,0);
    \coordinate (g) at (\initial+6*\step,0);
    \coordinate (h) at (\initial+7*\step,0);
    \coordinate (i) at (\initial+8*\step,0);
    \coordinate (j) at (\initial+9*\step,0);
    \coordinate (k) at (\initial+10*\step,0);
    \coordinate (l) at (\initial+11*\step,0);
    \coordinate (m) at (\initial+12*\step,0);

    \foreach \point in {a,b,c,d,e,f,g,h,i,j,k,l,m} \fill [black] (\point)
    circle (\rad);

    \draw (c) ellipse ({\radh} and {\radv});
    \draw (g) ellipse ({\radh} and {\radv});
    \draw (k) ellipse ({\radh} and {\radv});
  \end{tikzpicture}}
\\
\subfigure[Flower with $3$ petals.]{
  \begin{tikzpicture}
    \def \initial {0};
    \def \step {0.5};
    \def \rad {1.5pt};
    \def \radv {3.5mm};
    \def \radh {13mm};

    \coordinate (a) at (0,\initial);
    \coordinate (b) at (0,\initial+\step);
    \coordinate (c) at (0,\initial+2*\step);
    \coordinate (d) at (0,\initial+3*\step);
    \coordinate (e) at (0,\initial+4*\step);

    \coordinate (f) at ($ (a) + \step*({cos(210)},{sin(210)}) $) ;
    \coordinate (g) at ($ (a) + 2*\step*({cos(210)},{sin(210)}) $) ;
    \coordinate (h) at ($ (a) + 3*\step*({cos(210)},{sin(210)}) $) ;
    \coordinate (i) at ($ (a) + 4*\step*({cos(210)},{sin(210)}) $) ;

    \coordinate (j) at ($ (a) + \step*({cos(330)},{sin(330)}) $) ;
    \coordinate (k) at ($ (a) + 2*\step*({cos(330)},{sin(330)}) $) ;
    \coordinate (l) at ($ (a) + 3*\step*({cos(330)},{sin(330)}) $) ;
    \coordinate (m) at ($ (a) + 4*\step*({cos(330)},{sin(330)}) $) ;

    \foreach \point in {a,b,c,d,e,f,g,h,i,j,k,l,m} \fill [black] (\point)
    circle (\rad);

    \draw (c) ellipse ({\radv} and {\radh});
    \draw[rotate=210] (g) ellipse ({\radh} and {\radv});
    \draw[rotate=330] (k) ellipse ({\radh} and {\radv});
  \end{tikzpicture}}
\caption{Kissing pair, chained triple and flower with $3$ petals. Each dot represents
  an edge and each ellipse represents a matching.}
\label{fig:structures}
\end{figure}

Let $\Kcal$ denote the subset of $[s]^k$ such that $\iseq\in \Kcal$ if
each component $C \in \Ccal(\iseq)$ satisfies $\cn_C \geq 2$.  Let
$\Kcal'$ be the subset of $\Kcal$ such that $\iseq\in\Kcal'$ if
\begin{description}
\item{(a)} $\card{\Ccal(\iseq)}=\floor{k/2}$ (and so
  $\cn_C\in\set{2,3}$ for every $C\in \Ccal(\iseq)$);
\item{(b)} if $\cn_C = 2$ for some $C \in \Ccal(\iseq)$, then $C$ is
   a kissing pair;
\item{(c)} if $\cn_C = 3$ for some $C \in \Ccal(\iseq)$, then $C$ is
   a chained triple or it is a flower with $3$ petals.
\end{description}

\noindent {\bf Remark}: By the definition of $\Kcal'$, if $k$ is even,
then every component of $\Ccal({\bf i})$ for ${\bf i}\in \Kcal'$ is a
kissing pair; whereas if $k$ is odd, every component in $\Ccal({\bf
  i})$ is a kissing pair except one, which is a chained triple or a
flower with $3$ petals.\smallskip

For $i\in[s]$, let $X_i$ be the indicator variable for the event that
$M_i\subseteq \Gnp$ and let $Y_i = X_i - \ex X_i$. Note that $\ex
X_i=p^{\ell}$ for all $i\in [s]$. We first estimate $\ex
\big(\prod_{j=1}^k Y_{i_j}\big)$, for ${\bf i}\in \Kcal'$.

\begin{prop} \lab{p:prodY}
For any $\iseq=(i_1,\ldots,i_k)\in \Kcal'$,
if $k$ is even,
\begin{eqnarray*}
  \ex\left(\prod_{j=1}^k Y_{i_j}\right)=(p^{2\ell-1}-p^{2\ell})^{k/2};
\end{eqnarray*}
 if $k$ is odd,
 \begin{eqnarray*}
 \left| \ex\left(\prod_{j=1}^k Y_{i_j}\right)\right|
  \le (p^{2\ell-1}-p^{2\ell})^{(k-3)/2}p^{3\ell-2}.
\end{eqnarray*}
\end{prop}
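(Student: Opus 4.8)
The plan is to exploit independence across the connected components of $\HM(\iseq)$ together with the fact that $Y_i = X_i - p^\ell$ has mean zero, so that every component contributing to the expectation must have $\cn_C \ge 2$ (which is guaranteed since $\iseq \in \Kcal' \subseteq \Kcal$). Write $U = \bigcup_{C \in \Ccal(\iseq)} \paren{\bigcup_{M \in C} M}$ for the union of all edges appearing in the $k$ matchings, and partition $U$ according to which component it belongs to. Since edges in distinct components $C$ live in disjoint edge-sets of $K_{[n]}$, the families of indicator variables $\set{X_e : e \in C}$ for different $C$ are independent, and hence
\begin{equation*}
  \ex\left(\prod_{j=1}^k Y_{i_j}\right) = \prod_{C \in \Ccal(\iseq)} \ex\left(\prod_{j : M_{i_j} \in V(C)} Y_{i_j}\right).
\end{equation*}
So it suffices to compute, for each component type allowed in $\Kcal'$, the corresponding local expectation.

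First I would do the kissing-pair computation. If $C$ is a kissing pair on matchings $M, M'$ sharing exactly one edge, then $M \cup M'$ has $2\ell - 1$ edges. Write $X_M = \prod_{e \in M} X_e$ and similarly $X_{M'}$; since the $X_e$ are independent Bernoulli$(p)$ and $M, M'$ overlap in one edge, $\ex(X_M X_{M'}) = p^{2\ell - 1}$ while $\ex X_M = \ex X_{M'} = p^\ell$. Hence
\begin{equation*}
  \ex(Y_M Y_{M'}) = \ex(X_M X_{M'}) - \ex X_M \ex X_{M'} = p^{2\ell-1} - p^{2\ell}.
\end{equation*}
When $k$ is even, every component is a kissing pair and there are exactly $k/2$ of them, giving the stated product $(p^{2\ell-1}-p^{2\ell})^{k/2}$.

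Next, for odd $k$, by the Remark after the definition of $\Kcal'$ there are $(k-3)/2$ kissing pairs and one exceptional component $C_0$ with $\cn_{C_0} = 3$ that is either a chained triple or a $3$-petal flower. I would bound $\card{\ex(Y_{M_1} Y_{M_2} Y_{M_3})}$ for these two shapes. Expand $Y_{M_a} = X_{M_a} - p^\ell$; the expectation is a signed sum of terms $\ex\paren{\prod_{a \in S} X_{M_a}} \cdot (-p^\ell)^{3 - \card S}$ over $S \subseteq \set{1,2,3}$, and $\ex\paren{\prod_{a \in S} X_{M_a}} = p^{\abs{\bigcup_{a\in S} M_a}}$. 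For a chained triple, $\abs{M_1 \cup M_2} = \abs{M_2 \cup M_3} = 2\ell-1$, $\abs{M_1 \cup M_3} = 2\ell$, and $\abs{M_1 \cup M_2 \cup M_3} = 3\ell - 2$; for a $3$-petal flower all pairwise unions have size $2\ell - 1$ and the triple union has size $3\ell - 2$. In either case the dominant surviving term is $p^{3\ell - 2}$ (the full triple intersection), and I would check that all terms are $O(p^{3\ell-2})$ and that the leading coefficient is bounded by $1$ in absolute value — indeed one can factor the chained-triple expectation as $\ex(Y_{M_1}Y_{M_2}Y_{M_3})$ where conditioning on the shared edges shows it equals $(p^{2\ell-1} - p^{2\ell}) \cdot (\text{something of size } \le p^{\ell-1})$, cleanly giving the bound $p^{3\ell-2}$. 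Multiplying the one exceptional factor (absolute value $\le p^{3\ell-2}$) by the $(k-3)/2$ kissing-pair factors $(p^{2\ell-1}-p^{2\ell})$ yields exactly the claimed inequality.

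The only mild obstacle is the bookkeeping in the odd case: one must verify both that no term in the signed expansion exceeds $p^{3\ell-2}$ in order of magnitude and that after cancellation the coefficient does not blow up. The cleanest route is to avoid the brute-force inclusion–exclusion and instead condition on the indicator variables of the shared edge(s): for the flower, condition on $X_e$ (the common edge $e$); given $X_e = 1$ the three "petals" $M_a \setminus e$ are edge-disjoint, so $\ex(\prod_a Y_{M_a} \mid X_e)$ factors through the (conditionally independent) petal indicators, and a short computation gives absolute value $\le p^{3\ell - 2}$. The chained-triple case is handled by conditioning on the two distinct shared edges. The even case and the component-independence factorization are routine. \qed
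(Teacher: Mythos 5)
Your proposal is correct and follows essentially the same route as the paper: factor the expectation over the edge-disjoint components of $\Ccal(\iseq)$ by independence, compute $\ex(Y_MY_{M'})=p^{2\ell-1}-p^{2\ell}$ exactly for each kissing pair, and bound the single triple component (chained triple or $3$-petal flower) by $p^{3\ell-2}$, which is precisely what the paper does (its exact values $p^{3\ell-2}(1-p)^2$ and $p^{3\ell-2}(1-3p+2p^2)$ agree with what your expansion or conditioning argument yields). The only difference is that you spell out the triple-component computation, which the paper leaves as an easy verification.
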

\begin{proof}
  We only give a detailed proof for the case that $k$ is even and the proof for
  the other case
 is analogous. By the definition of $\Kcal'$, $\Ccal(\iseq)$
contains $k/2$ components, each of which is a kissing pair. Without
loss of generality, we may assume that $\{M_{i_{2j-1}},M_{i_{2j}}\}$,
$1\le j\le k/2$ are kissing pairs. Since each kissing pair does not
share any edge with other kissing pairs, $Y_{i_{2j-1}}$ is independent
with all other $Y_{i'}$, $i'\in \iseq$, except for
$Y_{i_{2j}}$. Hence,
$$
\ex\left(\prod_{j=1}^k Y_{i_j}\right)=\prod_{j=1}^{k/2}\ex\left(
  Y_{i_{2j-1}}Y_{i_{2j}}\right)=\ex\left(
  Y_{i_{1}}Y_{i_{2}}\right)^{k/2}.
$$
Since $M_{i_1}$ and $M_{i_2}$ share exactly one edge, we have $|M_1\cap
M_2|=2\ell-1$ and so
$$
\ex \paren{Y_{i_{1}}Y_{i_{2}}}= \ex \paren{X_{i_{1}}X_{i_{2}}}-p^{2\ell}=p^{2\ell-1}-p^{2\ell}.
$$
This completes the proof for the case that $k$ is even. For odd $k$, we can easily show that,
if the component with three matchings is a chained
triple, then
\begin{eqnarray*}
  \ex\left(\prod_{j=1}^k Y_{i_j}\right)
  =(p^{2\ell-1}-p^{2\ell})^{(k-3)/2}p^{3\ell-2}(1-p)^2,
\end{eqnarray*}
and, if the component with three matchings is a
flower with $3$ petals, then
\begin{eqnarray*}
  \ex\left(\prod_{j=1}^k Y_{i_j}\right)
  =(p^{2\ell-1}-p^{2\ell})^{(k-3)/2}p^{3\ell-2}(1-3p+2p^2).
\end{eqnarray*}
Since $0\le(1-p)^2\le 1$ and $|1-3p+2p^2|\le 1$, the inequality in the lemma follows.
\end{proof}

The following lemma shows that the
leading contribution to the $k$-th central moment of $X_{n,\ell}$ is
from graph structures in $\Kcal'$.
\begin{lemma}
  \lab{l:sub-significant} Suppose that $1-p = \Omega(1)$ and $\ell^2 =
  o(n^2p)$. For every fixed even $k\in\setN$ with $k\geq 2$,
  \begin{equation}
    \label{eq:small_even}
    \mean[\big]{(X-\ex X)^k}
    =
    \sum_{\iseq \in \Kcal}
    \mean[\Big]{\prod_{j=1}^k Y_{i_j}}
    \sim
    \sum_{\iseq \in \Kcal'}
    \mean[\Big]{\prod_{j=1}^k Y_{i_j}}
    =
    \card{\Kcal'}
    (p^{2\ell-1}-p^{2\ell})^{k/2};
  \end{equation}
  and, for every odd $k\in\setN$ with $k\ge 3$,
  \begin{equation}
    \label{eq:small_odd}
    \abs[\Big]{\mean[\big]{(X-\ex X)^k}}
    \leq
    \sum_{\iseq \in \Kcal} \abs[\Big]{
    \mean[\Big]{\prod_{j=1}^k Y_{i_j}}}
    \le
    (1+o(1)) \card{\Kcal'} (p^{2\ell-1}-p^{2\ell})^{(k-3)/2}p^{3\ell-2}.
  \end{equation}
\end{lemma}

\noindent {\bf Remark}: Note that
$(p^{2\ell-1}-p^{2\ell})^{(k-3)/2}p^{3\ell-2}$ in~\eqref{eq:small_odd} is
an upper bound of $\abs[\Big]{ \mean[\Big]{\prod_{j=1}^k Y_{i_j}}}$
for any ${\bf i}\in \Kcal'$ by Proposition~\ref{p:prodY}. In fact, it
is possible to strengthen \eqref{eq:small_odd} to $
\abs[\big]{\mean[\big]{(X-\ex X)^k}} \leq (1+o(1)) \sum_{\iseq \in
  \Kcal'} \abs[\big]{\mean[\big]{\prod_{j=1}^k Y_{i_j}}}$ with a
slightly more delicate proof. But, an upper bound as
in~\eqref{eq:small_odd} is sufficient and it allows us to present a
slightly simpler proof.

We leave Lemma~\ref{l:sub-significant} to be proved in
Section~\ref{s:sub-significant}. Now we complete the proof of
  Theorem~\ref{t:moments} by assuming Lemma~\ref{l:sub-significant}.

\begin{proof}[Proof of Theorem~\ref{t:moments}]
  Suppose that $k$ is even.  By Lemma~\ref{l:sub-significant}, it
    suffices to show that
    $\card{\Kcal'}(p^{2\ell-1}-p^{2\ell})^{k/2}\sim
    (k-1)!!\bar\sigma^k$. Recall that $\iseq\in \Kcal'$ if
  $\card{\Ccal(\iseq)}= k/2$ and each $C\in\Ccal(\iseq)$ is a kissing
  pair. First we
    evaluate the size of the set $\Tcal$ of $(k/2)$-tuples $((i_1',
    i_2'),\dotsc, (i_{k-1}', i_{k}'))\in ([s]\times[s])^{k/2}$ such
    that, for each pair $(i_{j}', i_{j+1}')$ with odd $j\in[k-1]$, we
    have that $M_{i_{j'}}$ and $M_{i_{j+1}'}$ are a kissing pair and
    $|M_{i_j'}\cap M_{i_t'}| = 0$ and $|M_{i_{j+1}'}\cap M_{i_t'}| =
    0$ for all $t < j$. Given any $\iseq' \in \Tcal$ and any perfect
    matching $P$ of $[k]$, we obtain a $k$-tuple $\iseq\in\Kcal'$ as
    follows.  Order the edges in $P$ as $(u_1v_1,\dotsc,
    u_{k/2}v_{k/2})$ in a way such that, for every $j\in[k/2]$, $u_j<
    v_j$ and $u_j < u_{j'}$ for any $j' > j$. Then set $i_{u_j} =
    i_{2j-1}'$ and $i_{v_j} = i_{2j}'$ for all $1\leq j\leq k/2$.
    Each $\iseq$ in $\Kcal'$ is generated by a unique pair $(\iseq',
    P)$, where $\iseq'\in\Tcal$ and $P$ is a perfect matching on
    $[k]$.  Since there are exactly $(k-1)!!$ such matchings, we have
    that
  \begin{equation}
  |\Kcal'|=(k-1)!!|\Tcal|. \lab{sizeOfK'}
\end{equation}
Note that $|V(H_\Mcal)| = s$. Recall from~\eqn{sizeOfs} that
$s=\binom{n}{2l}(2l-1)!!$.  Note also that $H_{{\cal M}}$ is a regular
graph. Let $D$ denote the degree of any vertex in $H_{\Mcal}$ and let
$d$ denote the number of $\ell$-matchings with exactly one edge in
common with a given fixed $\ell$-matching.  Let $\Delta_r$ denote the
number of $\ell$-matchings containing a given $r$-matching. We have
that
  \begin{equation}
    \label{eq:nD}
    D\leq \ell \Delta_1 = \ell \binom{n-2}{2l-2}(2l-3)!!
    = O\paren[\Big]{s \frac{\ell^2}{n^2}} = o(s)
  \end{equation}
  since $\ell^2/n^2 = o(1)$. Moreover, by the
  Inclusion-Exclusion Principle,
  \begin{equation}
    \label{eq:d-sub}
    \begin{split}
      &d \leq \ell \Delta_1
    \quad\text{and}\\
    &d\geq \ell \Delta_1 - \binom{\ell}{2}\Delta_2
    =
    \ell \binom{n-2}{2\ell-2}(2l-3)!!-
    \binom{\ell}{2}\binom{n-4}{2\ell-4}(2\ell-5)!!
    \sim \ell\Delta_1
    \end{split}
  \end{equation}
  since $\binom{\ell}{2}\Delta_2/(\ell\Delta_1) = O(\ell^2/n^2) =
  o(1)$. Thus, we have shown that $d\sim \ell\Delta_1$. Suppose we
  already chose $(i_1', i_2'), \dotsc, (i_{j-2}', i_{j-1}')$. We
  compute the number of choices for $(i_{j}', i_{j+1}')$. We have at
  most $s$ choices for $i_{j}'$ and, using~\eqref{eq:nD}, at least $s
  - kD\sim s$ choices. Next, we estimate the number of choices for
  $i_{j+1}'$. The matching $M_{i_{j+1}'}$ must be chosen among the
  ones that have exactly one edge in common with $M_{i_{j}'}$. Hence,
  the number of choices is at most $d$. On the other hand, a matching
  containing exactly one edge $e_1$ in common with $M_{i_j'}$ cannot
  be chosen as $M_{i_{j+1}'}$ only if it has
  another edge $e_2$ in common with some matching $M_{i_{t}'}$ with $t
  < j$. There are $\ell$ choices for $e_1$ and at most $k\ell$ choices
  for $e_2$. The number of $\ell$-matchings containing $e_1$ and $e_2$
  is at most $\Delta_2$. Thus, the number of choices for
  $M_{i_{j+1}'}$ is at least $d-k\ell^2\Delta_2$. By~\eqref{eq:d-sub},
  $d-k \ell^2\Delta_2 \sim d$ and so the number of choices for
  $i_{j+1}'$ is asymptotically $d$. Thus, $|\Tcal| \sim (s d)^{k/2}$,
  and we are done by~\eqref{eq:nD},~\eqref{eq:d-sub}
  and~\eqn{sizeOfK'}, finishing the proof for even $k$.

    Now suppose that $k$ is odd. By
      Lemma~\ref{l:sub-significant},~\eqref{eq:barsigma},
      and~\eqref{eq:d-sub}, it suffices to show that
    \begin{equation}
      \label{eq:sizeK'goalodd}
      \card{\Kcal'}(p^{2\ell-1}-p^{2\ell})^{(k-3)/2}p^{3\ell-2}
      = o
      \left(\left(sd (p^{2\ell-1}-p^{2\ell})\right)^{k/2}\right).
    \end{equation}
 Recall that $\iseq\in \Kcal'$ if
  $\Ccal(\iseq)$ contains $(k-1)/2$ components, in which $(k-3)/2$ are
  kissing pairs and the other one is a chained triple or a flower with
  $3$ petals. Similarly to the previous case when $k$ is even, we
  consider the set $\Tcal$ of $\floor{k/2}$-tuples
  $((i_1', i_2'),\dotsc, (i_{k-2}', i_{k-3}'), (i_{k-2}', i_{k-1}',
  i_k'))$ with every $i_j'\in [s]$, such that all $\{M_{i_{2j-1}'},
  M_{i_{2j}'}\}$, $1\le j\le (k-3)/2$, are kissing pairs and
  $(M_{i_{k-2}'},M_{i_{k-1}'}, M_{i_k'})$ is a chained triple or a
  flower with $3$ petals and all kissing pairs and chained triples (or
  the flower) are edge disjoint. Similar to the previous argument, we
  have $|\Kcal'|=O(|\Tcal|)$ since $k$ is fixed. The number of
  choices for the first $(k-3)/2$ pairs in $T\in\Tcal$ is at most $(s
  d)^{(k-3)/2}$ and the number of choices for the triple
  $(i_{k-2}',i_{k-1}', i_k')$ is at most $2sd^2$ (at most $sd^2$
  choices for a chained triple and at most $sd^2$ choices for a flower
  with $3$ petals). Thus, we have that $|\Kcal'| = O\left( (s d)^{k/2} \cdot
    d/\sqrt{sd} \right)$ and, in order to
  prove~\eqref{eq:sizeK'goalodd}, it suffices to show that $d
  p^{3\ell-2} = o(\sqrt{sd(p^{2\ell-1}-p^{2\ell})^3})$. Indeed,
  using~\eqref{eq:nD} and~\eqref{eq:d-sub},
  \begin{equation*}
    \frac{d^2 p^{6\ell-4}}{(p^{2\ell-1}-p^{2\ell})^3sd}
    = O\left(\frac{d p^{6\ell-4}}{s p^{6\ell-3}(1-p)^3} \right)
    = O\left(\frac{\ell^2}{n^2p (1-p)^3}\right),
  \end{equation*}
  which goes to zero since $\ell^2/(n^2p) = o(1)$ and $1-p =
  \Omega(1)$.
\end{proof}

\subsection{Proof of Lemma~\ref{l:sub-significant}}
\lab{s:sub-significant}

In this section, we assume $\ell = o(n\sqrt{p})$ and $1-p = \Omega(1)$
and $k\in \setN$ is fixed, which are the hypotheses of
Lemma~\ref{l:sub-significant}. Recall that for $C\in\Ccal(\iseq)$ we
defined $\cn_C = \card{\set{j: M_{i_j}\in V(C)}}$ and $\cm_C =
\card{\bigcup_{M\in C} M}$. For each $C\in\Ccal(\iseq)$ where
$\iseq\in [s]^k$, define
$$
Y_C=\prod_{j: M_{i_j}\in C} Y_{i_j}.
$$
and
define
\begin{equation*}
  {\check p}_C =
  \begin{cases}
    p^{\cm_C}-p^{{\check n}_C\ell},& \text{if }{\check n}_C\le 2;\\
    p^{\cm_C},& \text{otherwise};
  \end{cases}
\end{equation*}
define then
\begin{equation*}
  \cp(\iseq)
  =
  \prod_{C\in\Ccal(\iseq)} \cp_{C}.
\end{equation*}

\begin{lemma}
  \label{lem:uppercp}
  For every $\iseq\in [s]^k$ and any $C\in\Ccal(\iseq)$,
  \begin{equation*}
    \left|
      \ex Y_C
    \right|
    \leq
    2^{\cn_C}{\check p}_C.
  \end{equation*}
  Moreover, if $\cn_C \le 2$, then $ \ex Y_C  =\cp_C$.
  In particular, $\cp_C=0$ if $\cn_C=1$.
\end{lemma}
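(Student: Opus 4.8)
The plan is to expand $Y_C$ into a sum of products of the underlying indicator variables and take expectations term by term. Write $J_C=\set{j\in[k]:M_{i_j}\in V(C)}$, so that $\card{J_C}=\cn_C$; using $\ex X_{i_j}=p^{\ell}$ for every $j$, expand
$$
Y_C=\prod_{j\in J_C}\paren{X_{i_j}-p^{\ell}}
=\sum_{S\subseteq J_C}(-p^{\ell})^{\,\cn_C-\card{S}}\prod_{j\in S}X_{i_j}.
$$
Since $\prod_{j\in S}X_{i_j}$ is the indicator of the event that every edge of $\bigcup_{j\in S}M_{i_j}$ is present in $\Gnp$, its expectation equals $p^{e(S)}$, where $e(S):=\card{\bigcup_{j\in S}M_{i_j}}$ and $e(\varnothing)=0$. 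Hence $\ex Y_C=\sum_{S\subseteq J_C}(-1)^{\cn_C-\card{S}}\,p^{\,e(S)+\ell(\cn_C-\card{S})}$.

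First I would dispose of the case $\cn_C\ge 3$, where $\cp_C=p^{\cm_C}$ by definition. The key elementary observation is that $e(S)+\ell(\cn_C-\card{S})\ge\cm_C$ for every $S\subseteq J_C$: starting from $J_C$ and deleting the indices outside $S$ one at a time, each deletion discards at most $\ell$ edges from the union (each matching has exactly $\ell$ edges), so $e(S)\ge e(J_C)-\ell(\cn_C-\card{S})=\cm_C-\ell(\cn_C-\card{S})$. Since $0<p<1$ (as $1-p=\Omega(1)$), every summand in $\ex Y_C$ has absolute value at most $p^{\cm_C}$, and there are $2^{\cn_C}$ of them, giving $\card{\ex Y_C}\le 2^{\cn_C}p^{\cm_C}=2^{\cn_C}\cp_C$.

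For $\cn_C\le 2$ this crude bound is too weak, because $\cp_C=p^{\cm_C}-p^{2\ell}$ is strictly less than $p^{\cm_C}$, so instead I would compute $\ex Y_C$ exactly. If $\cn_C=1$ then $Y_C=Y_{i_j}$ and $\ex Y_C=0$; this situation does not arise for $\iseq\in\Kcal$, and the final clause of the lemma merely records the matching convention $\cp_C=0$. If $\cn_C=2$, then either the two indices point to the same matching, in which case $\cm_C=\ell$ and $\ex Y_C=\var{X_{i_j}}=p^{\ell}-p^{2\ell}=p^{\cm_C}-p^{2\ell}$; or they point to distinct matchings, which must intersect since $C$ is connected, in which case $\ex Y_C=\ex\paren{X_{i_{j_1}}X_{i_{j_2}}}-p^{2\ell}=p^{\cm_C}-p^{2\ell}$ with $\cm_C=\card{M_{i_{j_1}}\cup M_{i_{j_2}}}$. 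In both subcases $\ex Y_C=\cp_C$, and since the union of at most two $\ell$-matchings has at most $2\ell$ edges, $\cp_C\ge 0$, so trivially $\card{\ex Y_C}=\cp_C\le 2^{\cn_C}\cp_C$.

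I do not expect a genuine obstacle here: the statement is essentially a direct computation. The only points needing a little care are the exponent inequality $e(S)+\ell(\cn_C-\card{S})\ge\cm_C$, and the recognition that for $\cn_C\le 2$ one must evaluate $\ex Y_C$ on the nose rather than bound it crudely, since the $-p^{2\ell}$ slack built into $\cp_C$ is exactly what the crude bound throws away.
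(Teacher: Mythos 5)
Your proof is correct and takes essentially the same route as the paper's: expand $\prod_{j}(X_{i_j}-p^{\ell})$ over subsets, bound each resulting term using $\card[\big]{\bigcup_{j\in J_C}M_{i_j}}\le \card[\big]{\bigcup_{j\in S}M_{i_j}}+\ell(\cn_C-\card{S})$ to obtain $2^{\cn_C}p^{\cm_C}$, and evaluate $\ex Y_C$ exactly when $\cn_C\le 2$. Your write-up is even slightly more explicit than the paper's (the repeated-matching subcase for $\cn_C=2$, and the remark that the crude bound alone does not yield $2^{\cn_C}\cp_C$ when $\cn_C\le 2$), but the substance is identical.
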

\begin{proof}
  This follows from the fact that, for any subset $I\in[k]$, we have
  that
  \begin{equation*}
    \begin{split}
      \left|
      \mean[\Big]{\textstyle \prod_{i\in I} Y_{i}}
    \right|
    &=\left|
      \mean[\Big]{\textstyle \prod_{i\in I} (X_{i}-p^{\ell})}
    \right|
    =
    \left|\sum_{I'\subseteq I} (-1)^{|I\setminus I'|}
      p^{\ell|I\setminus I'|}p^{\card{\cup_{i\in I'}M_i}}
    \right|
    \\
    &\leq
    2^{|I|} p^{\card{\cup_{i\in I}M_i}}
    \end{split}
  \end{equation*}
  since there are $2^{|I|}$ choices for $I'$ and, for each $I'$, we have
  \[
  \card{\cup_{i\in I}M_i}\le \card{\cup_{i\in I'}M_i}+\card{\cup_{i\in I\setminus I'}M_i}\le \card{\cup_{i\in I'}M_i}+\ell|I\setminus I'|.
  \]
   The conclusion for the case where $\cn_C \le 2$ follows
  directly from the definition of~$\cp_C$.
\end{proof}

\begin{proof}[Proof of Lemma~\ref{l:sub-significant}]
  For any $\iseq\in[s]^k$,
  \begin{equation}
  \mean[\Big]{\prod_{j=1}^k Y_{i_j}}=\prod_{C\in \Ccal(\iseq)} \ex Y_C,\lab{eq:YC}
  \end{equation}
  since $Y_C$, $C\in \Ccal(\iseq)$ are independent random variables.
  If $\Ccal(\iseq)$ has a component $C$ with $\cn_C = 1$, then it
  follows directly from~\eqn{eq:YC} and Lemma~\ref{lem:uppercp} that
  $\mean[\Big]{\prod_{j=1}^k Y_{i_j}}=0$. Thus, we have
  $\mean{(X-\mean{X})^k} = \sum_{\iseq \in \Kcal}
  \mean[\Big]{\prod_{j=1}^k Y_{i_j}}$. 
  By Proposition~\ref{p:prodY} and the definition of $\cp_C$,
    we have that
  \begin{equation}
    \mean[\Big]{\prod_{j=1}^k Y_{i_j}}=\cp(\iseq)\ \mbox{if $k$ is even}; \quad
    \left|\mean[\Big]{\prod_{j=1}^k Y_{i_j}}\right| \leq \cp(\iseq)\
  \mbox{if $k$ is odd}. \lab{eq:expYC}
  \end{equation}
  Next, we prove that
  \begin{equation}
    \lab{eq:significant}
    \sum_{\iseq \in \Kcal}
    \cp(\iseq)
    \sim
    \sum_{\iseq \in \Kcal'}
    \cp(\iseq).
  \end{equation}
  Note that Lemma~\ref{l:sub-significant} then follows from the above
  and the definition of $\cp(\iseq)$. Indeed, for even $k$,
  we have that
  \begin{equation*}
    \begin{split}
      \sum_{\iseq \in \Kcal}
      \mean[\Big]{\prod_{j=1}^k Y_{i_j}}
      &=
      \sum_{\iseq \in \Kcal'}
      \mean[\Big]{\prod_{j=1}^k Y_{i_j}}
      +
      \sum_{\iseq \in \Kcal\setminus \Kcal'}
      \mean[\Big]{\prod_{j=1}^k Y_{i_j}}
      \\
      &=
      \sum_{\iseq \in \Kcal'}
      \cp(\iseq)
      +
      O\paren[\Big]{
        \sum_{\iseq \in \Kcal\setminus \Kcal'}
        \cp(\iseq)
      }
      \sim
      \sum_{\iseq \in \Kcal'}
      \cp(\iseq),
    \end{split}
  \end{equation*}
  where the second equality follows by~\eqref{eq:expYC} and
  Lemma~\ref{lem:uppercp}, and the last relation holds
  by~\eqref{eq:significant}. Similarly, for odd $k$,
  \begin{equation*}
    \begin{split}
      \left|\sum_{\iseq \in \Kcal}
      \mean[\Big]{\prod_{j=1}^k Y_{i_j}}\right|
      &\leq
      \sum_{\iseq \in \Kcal'}
      \left|\mean[\Big]{\prod_{j=1}^k Y_{i_j}}\right|
      +
      \sum_{\iseq \in \Kcal\setminus \Kcal'}
      \left|\mean[\Big]{\prod_{j=1}^k Y_{i_j}}\right|
      \\
      &\leq
      \sum_{\iseq \in \Kcal'}
      \cp(\iseq)
      +O\left(
      \sum_{\iseq \in \Kcal\setminus \Kcal'}
        \cp(\iseq)
      \right)
      \sim
      \sum_{\iseq \in \Kcal'}
      \cp(\iseq).
    \end{split}
  \end{equation*}

Thus, it suffices to show~\eqref{eq:significant}. Let
$\Kcal(x_1,\dotsc, x_k)$ be the restriction of $\Kcal\setminus \Kcal'$
to the tuples $(i_1,\dots, i_k)$ such that, for every $1\leq j\leq k$,
the number of edges $e$ with $\card{\set{j': e\in M_{i_{j'}}}} = j$ is
$x_j$, that is, the number of edges that appear in exactly $j$
matchings is $x_j$. For any such tuple
$(x_1,\ldots,x_k)$, we have $\sum_{i=1}^k
ix_i=k\ell$. Since $\iseq\in \Kcal$, the number of edges contained
only in $M_{i_j}$ is at most $\ell-1$ for every $1\le j\le k$. It
follows immediately that $x_1\le k(\ell-1)$ and so we must have
$\sum_{i\geq 2}ix_i\geq k$.  Let $\Xcal = \set{(x_1,\dotsc,
    x_k)\in\setN^k:\sum_{i\geq 1}i x_i = k\ell,\ \sum_{i\geq 2}i x_i
    \geq k}$.  In order to prove~\eqref{eq:significant}, we will
define switchings from $\Kcal(x_1,\dotsc, x_k)$ to $\Kcal'$ for every
$(x_1,\dotsc,x_k)\in \Xcal$ and thereby we prove that the contribution
to~\eqref{eq:significant} from $\Kcal$ is dominated by the
contribution from $\Kcal'$. We discuss the cases when $k$ is odd
and even separately.

  We first prove~\eqref{eq:significant} for even $k$.  Let
  $(x_1,\dotsc, x_k)\in \Xcal$. Let ${\bf i}=(i_1,\dotsc, i_k)\in
  \Kcal(x_1,\dotsc, x_k)$. We define the following switching from
  ${\bf i}$ to $k$-tuples in $\Kcal'$ (see
  Figure~\ref{fig:switching-sub}).  For each $j$, let $I_j$ denote the
  set of edges that $M_{i_j}$  shares with $\cup_{j'\neq j}M_{i_{j'}}$.
  \begin{enumerate}
  \item ({\em Delete the shared edges}) For each $j\in[k]$, let $M_{i_j}' := M_{i_j}\setminus I_j$.

  \item ({\em Obtain pairwise disjoint partial matchings each of size $\ell-1$}) For each $j\in[k]$, choose edges $a_1^{(j)},\dotsc,
    a_{|I_j|-1}^{(j)}$, one after the other, so that
    $M_{i_j}'\cup\set{a_1^{(j)},\dotsc,a_{|I_{j}|-1}^{(j)}}$ is an
    $(\ell-1)$-matching and,  for every
    $r$, the edge $a_r^{(j)}$ is not in $\left(\bigcup_{j'=1}^{k}
      M_{i_{j'}}'\right) \cup \left(\bigcup_{j'<j}
      \set{a_1^{(j')},\dotsc,a_{|I_{j'}|-1}^{(j')}}\right)$.  Let $M_{i_j}'' \eqdef M_{i_j}' \cup
    \set{a_1^{(j)},\dotsc,a_{|I_{j}|-1}^{(j)}}$.
  \item ({\em Build kissing pairs}) Choose a perfect matching~$P$
         in~$[k]$. (Here we are choosing the pairs of matchings that
         will form a kissing pair in $\Kcal'$.)
       \item ({\em Choose shared edges in kissing pairs}) Let
         $e_1,\dotsc, e_{k/2}$ be an enumeration of the edges
         in~$P$. For each $r\in [k/2]$, let $j$ and $j'$ denote the
         ends of~$e_r$ and choose an edge $f_r$ such that $f_r\not\in
         \left(\bigcup_{a=1}^k M_{i_a}''\right)\cup
         \left(\bigcup_{b=1}^{r-1}f_b\right)$ and both $M_{i_j}''' :=
         M_{i_j}''\cup\set{f_r}$ and $M_{i_{j'}}''' :=
         M_{i_{j'}}''\cup\set{f_r}$ are matchings. (Note that
         $M_{i_j}'''$ and $M_{i_{j'}}'''$ are each of size $\ell$ and
         they form a kissing pair.)
       \item ({\em Update the indices}) Let $\iseq'''$ be the new
         tuple such that for every $1\le j\le k$,
         $M_{i'''_j}=M'''_{i_j}$.
   \end{enumerate}
   Let
   \begin{equation}
   L(x_1,\dotsc, x_k)={\left(\frac{1}{2} \binom{n}{2}\right)^{k\ell-x_1-k/2}}.\lab{eq:L}
   \end{equation}
   Now we show that, for every $(x_1,\ldots,x_k)\in \Xcal$ and for
   every ${\bf i}\in \Kcal(x_1,\ldots,x_k)$, the number of applicable
   switchings defined above for ${\bf i}$ is at least
   $L(x_1,\ldots,x_k)$, regardless of the choice of ${\bf i}$, for all
   sufficiently large $n$. In Step~2,
   given $r\in[|I_j|-1]$, we have that any edge with no ends in the set of vertices
   induced by the set of edges $(\bigcup_{j'=1}^{k} M_{i_{j'}}') \cup
   (\bigcup_{j'<j} \set{a_{1}^{(j')},\dotsc, a_{|I_{j'}|-1}^{(j')}})
   \cup(\bigcup_{r'<r}a_{r'}^{(j)})$ is a possible choice for
   $a_r^{(j)}$. Since this set has at most $k\ell$ edges, it induces
   at most $2k\ell$ vertices and so there are at least
   $\binom{n}{2}-2k\ell n$ choices for $a_r^{(j)}$ in Step~2 and thus
   we have at least $\left(\binom{n}{2}-2k\ell n
   \right)^{|I_j|-1}$ choices in Step~2.  There are
   $k!/((k/2)!2^{k/2})\geq 1$ choices for $P$ in Step~3. With
     the same argument as before, given the choice of $P$ in Step~3,
   there are at least $\left(\binom{n}{2}-2k\ell n \right)$ choices
   for $f$ for each pair in $P$ in Step~4. Since there are $k/2$
     pairs in $P$ in total, it follows then that the number of
   applicable switchings is at least
   \begin{equation}
     \begin{split}
      {\left(\binom{n}{2}-2k\ell n \right)^{k/2+\sum_{j=1}^k
         (|I_j|-1)}}
       &=
       {\left(\binom{n}{2}-2k\ell n \right)^{-k/2+\sum_{j=1}^k
         |I_j|}}\\
     &\ge L(x_1,\dotsc, x_k),
     \end{split}\lab{eq:switch-even}
   \end{equation}
   for all sufficiently large $n$,
where we used the fact that $\ell^2 =
   o(n^2p)$ and that
   $\sum_{j}\card{I_j} = k\ell-x_1$ in
   the last inequality.

   Now we describe the inverse switching, which converts ${\bf i}=(i_1,\dotsc,
   i_k)\in\Kcal'$ to some $k$-tuples in $\Kcal(x_1,\ldots,x_k)$.
   \begin{enumerate}
   \item \textit{(Choose the number of edges shared by each matching)}
     Choose an integral vector $\rseq = (r_1,\dotsc, r_k)$ so that
     $\sum_{j=1}^{k} r_j = k\ell-x_1$ such that $r_j\geq 1$ for every
     $j$. (In the following steps, we will convert ${\bf i}$ to some
     ${\bf i}''$ so that $M_{i''_j}$ contains
     $r_j$ shared edges)
   \item \textit{(Make room for shared edges)} For each
     $j\in[k]$, let $f_j$ be the unique edge in $M_{i_j}\cap
     (\bigcup_{j'\neq j} M_{i_{j'}})$.  Choose $r_j-1$ edges one by
     one without repetition in $M_{i_j}-f_j$. Let $M_{i_j}'$ be the
     $(\ell-r_j)$-matching obtained from $M_{i_j}$ by deleting these
     edges and $f_j$.
   \item \textit{(Choose shared edges)} Choose a set of edges $X$ of
     size $\sum_{i=2}^k x_i$ such that no edge in $X$ is contained
   in $\bigcup_{j=1}^k M_{i_j}'$.
   Partition $X$ into
     sets $X_2,\dotsc, X_k$ such that $|X_i|= x_i$ for every $i$.
   \item \textit{(Assign shared edges)} Construct a bipartite graph
     $Q$ with bipartition $(X,[k])$ so that the degree of each $e\in
     X_i$ is $i$ and the degree of each $j\in[k]$ is $r_j$.  Let
     $M_{i_j}''$ be obtained from $M_{i_j}'$ by including the edges in
     $X$ that are adjacent to $j$ in~$Q$.
   \item \textit{(Update the indices)} If each $M_{i_{j}}''$ is an
     $\ell$-matching, let $\iseq''$ be the new tuple such that for
     every $1\le j\le k$, $M_{i''_j}=M''_{i_j}$.
   \end{enumerate}

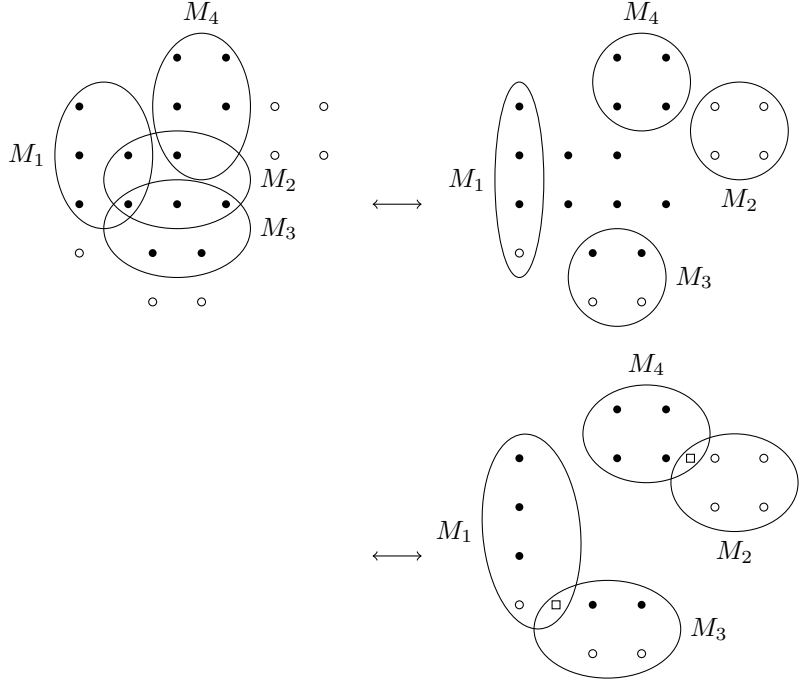
\begin{figure}
\centering
  \begin{tikzpicture}
    \def \initial {0};
    \def \step {0.65};
    \def \rad {1.5pt};

    \coordinate  (1) at (\initial,\initial);
    \coordinate  (2) at (\initial,\initial-1*\step);
    \coordinate  (3) at (\initial,\initial-2*\step);
    \coordinate  (4) at (\initial,\initial-3*\step);
    \coordinate  (5) at (\initial+\step,\initial-\step);
    \coordinate  (6) at (\initial+\step,\initial-2*\step);
    \coordinate  (7) at ($(5)+(\step, 0)$);
    \coordinate  (8) at ($(6)+(\step, 0)$);
    \coordinate  (9) at ($(8)+(\step, 0)$);
    \coordinate  (10) at ($(6)+(.5*\step, -1*\step)$);
    \coordinate  (11) at ($(10)+(\step, 0)$);
    \coordinate  (12) at ($(10)+(0, -\step)$);
    \coordinate  (13) at ($(12)+(\step, 0)$);
    \coordinate  (14) at ($(7)+(0,\step)$);
    \coordinate  (15) at ($(14)+(0,\step)$);
    \coordinate  (16) at ($(14)+(\step,0)$);
    \coordinate  (17) at ($(16)+(0,\step)$);
    \coordinate  (18) at ($(7)+(2*\step,0)$);
    \coordinate  (19) at ($(18)+(\step,0)$);
    \coordinate  (20) at ($(18)+(0,\step)$);
    \coordinate  (21) at ($(20)+(\step,0)$);

    \foreach \point in {1,2,3,5,6,7,8,9,10,11,14,15,16,17}
    \fill [black] (\point) circle ({\rad});

    \foreach \point in {4,12,13,18,19,20,21}
    \filldraw[fill=white]  (\point) circle ({\rad});

    \coordinate  (c1) at ($.5*(2)+.5*(5)$);
    \coordinate  (c2) at ($.5*(7)+.5*(8)$);
    \coordinate  (c3) at ($1/2*(8)+1/4*(10)+1/4*(11)$);
    \coordinate  (c4) at ($.5*(14)+.5*(16)$);

    \draw (c1) ellipse ({\step} and {1.5*\step});
    \coordinate [label=left:$M_1$] (m1) at ($(c1)-(\step,0)$);

    \draw (c2) ellipse ({1.5*\step} and {\step});
    \coordinate [label=right:$M_2$] (m2) at ($(c2)+(1.5*\step,0)$);

    \draw (c3) ellipse ({1.5*\step} and {\step});
    \coordinate [label=right:$M_3$] (m3) at ($(c3)+(1.5*\step,0)$);

    \draw (c4) ellipse ({\step} and {1.5*\step});
    \coordinate [label=above:$M_4$] (m4) at ($(c4)+(0,1.5*\step)$);

    \coordinate  (startarrow) at ($(19)+(\step,-\step)$);
    \coordinate  (endarrow) at ($(startarrow)+(\step,0)$);
    \draw [<->] (startarrow)--(endarrow);

    \def \drift {9*\step};
    \coordinate  (1a) at ($(1)+(\drift,0)$);
    \coordinate  (2a) at ($(2)+(\drift,0)$);
    \coordinate  (3a) at ($(3)+(\drift,0)$);
    \coordinate  (4a) at ($(4)+(\drift,0)$);
    \coordinate  (5a) at ($(5)+(\drift,0)$);
    \coordinate  (6a) at ($(6)+(\drift,0)$);
    \coordinate  (7a) at ($(7)+(\drift,0)$);
    \coordinate  (8a) at ($(8)+(\drift,0)$);
    \coordinate  (9a) at ($(9)+(\drift,0)$);
    \coordinate  (10a) at ($(10)+(\drift,0)$);
    \coordinate  (11a) at ($(11)+(\drift,0)$);
    \coordinate  (12a) at ($(12)+(\drift,0)$);
    \coordinate  (13a) at ($(13)+(\drift,0)$);
    \coordinate  (14a) at ($(14)+(\drift,0)$);
    \coordinate  (15a) at ($(15)+(\drift,0)$);
    \coordinate  (16a) at ($(16)+(\drift,0)$);
    \coordinate  (17a) at ($(17)+(\drift,0)$);
    \coordinate  (18a) at ($(18)+(\drift,0)$);
    \coordinate  (19a) at ($(19)+(\drift,0)$);
    \coordinate  (20a) at ($(20)+(\drift,0)$);
    \coordinate  (21a) at ($(21)+(\drift,0)$);

    \foreach \point in {1a,2a,3a,5a,6a,7a,8a,9a,10a,11a,14a,15a,16a,17a}
    \fill [black] (\point) circle ({\rad});

    \foreach \point in {4a,12a,13a,18a,19a,20a,21a}
    \filldraw [fill=white] (\point) circle ({\rad});

    \coordinate  (c1a) at ($.5*(2a)+.5*(3a)$);
    \coordinate  (c2a) at ($.5*(18a)+.5*(21a)$);
    \coordinate  (c3a) at ($.5*(10a)+.5*(13a)$);
    \coordinate  (c4a) at ($.5*(15a)+.5*(16a)$);

    \draw (c1a) ellipse ({.5*\step} and {2*\step});
    \coordinate [label=left:$M_1$] (m1a) at ($(c1a)-(.5*\step,0)$);

    \draw (c2a) ellipse ({\step} and {\step});
    \coordinate [label=below:$M_2$] (m2a) at ($(c2a)-(0,\step)$);

    \draw (c3a) ellipse ({\step} and {\step});
    \coordinate [label=right:$M_3$] (m3a) at ($(c3a)+(\step,0)$);

    \draw (c4a) ellipse ({\step} and {\step});
    \coordinate [label=above:$M_4$] (m4a) at ($(c4a)+(0,\step)$);

    \draw [<->]
    ($(startarrow)+(0,-.8*\drift)$)--($(endarrow)+(0,-.8*\drift)$);

    \coordinate  (1b) at ($(1)+(\drift,-.8*\drift)$);
    \coordinate  (2b) at ($(2)+(\drift,-.8*\drift)$);
    \coordinate  (3b) at ($(3)+(\drift,-.8*\drift)$);
    \coordinate  (4b) at ($(4)+(\drift,-.8*\drift)$);
    \coordinate  (5b) at ($(5)+(\drift,-.8*\drift)$);
    \coordinate  (6b) at ($(6)+(\drift,-.8*\drift)$);
    \coordinate  (7b) at ($(7)+(\drift,-.8*\drift)$);
    \coordinate  (8b) at ($(8)+(\drift,-.8*\drift)$);
    \coordinate  (9b) at ($(9)+(\drift,-.8*\drift)$);
    \coordinate  (10b) at ($(10)+(\drift,-.8*\drift)$);
    \coordinate  (11b) at ($(11)+(\drift,-.8*\drift)$);
    \coordinate  (12b) at ($(12)+(\drift,-.8*\drift)$);
    \coordinate  (13b) at ($(13)+(\drift,-.8*\drift)$);
    \coordinate  (14b) at ($(14)+(\drift,-.8*\drift)$);
    \coordinate  (15b) at ($(15)+(\drift,-.8*\drift)$);
    \coordinate  (16b) at ($(16)+(\drift,-.8*\drift)$);
    \coordinate  (17b) at ($(17)+(\drift,-.8*\drift)$);
    \coordinate  (18b) at ($(18)+(\drift,-.8*\drift)$);
    \coordinate  (19b) at ($(19)+(\drift,-.8*\drift)$);
    \coordinate  (20b) at ($(20)+(\drift,-.8*\drift)$);
    \coordinate  (21b) at ($(21)+(\drift,-.8*\drift)$);

    \coordinate (k1) at ($.5*(4b)+.5*(10b)$);
    \coordinate (k2) at ($.5*(16b)+.5*(20b)$);

    \foreach \point in {1b,2b,3b,10b,11b,14b,15b,16b,17b}
    \fill [black] (\point) circle ({\rad});

    \foreach \point in {4b,12b,13b,18b,19b,20b,21b}
    \filldraw [fill=white] (\point) circle ({\rad});

    \foreach \point in {k1,k2}
    \filldraw[fill=white] ($(\point)+(-1.5pt,-1.5pt)$) rectangle ($(\point)+(1.5pt,1.5pt)$);

    \coordinate  (c1b) at ($.5*(2b)+.5*(3b)+(.25*\step,0)$);
    \coordinate  (c2b) at ($.5*(18b)+.5*(20b)+(.4*\step,0)$);
    \coordinate  (c3b) at ($.5*(10b)+.5*(12b)+(.3*\step,0)$);
    \coordinate  (c4b) at ($.5*(17b)+.5*(16b)-(.4*\step,0)$);

    \draw[rotate=5] (c1b) ellipse ({\step} and {2*\step});
    \coordinate [label=left:$M_1$] (m1b) at ($(c1b)-(\step,0)$);

    \draw (c2b) ellipse ({1.3*\step} and {\step});
    \coordinate [label=below:$M_2$] (m2b) at ($(c2b)-(0,\step)$);

    \draw (c3b) ellipse ({1.5*\step} and {\step});
    \coordinate [label=right:$M_3$] (m3b) at ($(c3b)+(1.5*\step,0)$);

    \draw (c4b) ellipse ({1.3*\step} and {\step});
    \coordinate [label=above:$M_4$] (m4b) at ($(c4b)+(0,\step)$);
  \end{tikzpicture}
  \caption{Switching for subcritical case.}
  \label{fig:switching-sub}
\end{figure}

Let
\begin{equation}
U(x_1,\dotsc, x_k)={(k\ell-x_1)^{k-1}
         \ell^{k\ell-x_1-k}}\binom{n}{2}^{\sum_{r\geq
           2}x_r}\beta^{k\ell-x_1},\lab{eq:U}
\end{equation}
for some constant $\beta$ to be determined later.  We prove that, for
every $k$-tuple $(x_1,\dotsc,x_k)\in\Xcal$ and for every ${\bf
  i}=(i_1,\dotsc, i_k)\in\Kcal'$, the number of inverse switchings
that can convert ${\bf i}$ to some $k$-tuples in
$\Kcal(x_1,\ldots,x_k)$ is at most $U(x_1,\ldots,x_k)$.  Note that it
is possible for some choices made in Steps~3 and~4, $M_{i_j}''$ may
not be an $\ell$-matching. But we only need an upper bound for the
number of inverse switchings in our case.  The number of integer
compositions of $k\ell-x_1$ into $k$ positive parts is
$\binom{k\ell-x_1-1}{k-1},$
        and so we have $\binom{k\ell-x_1-1}{k-1}$ choices for the vector
   $\rseq$ in Step~1. The number of choices in Step~2 is at most
   $\ell^{\sum_{1\le j\le k}(r_j-1)}$. In Step~3, we have at most
   $\binom{\binom{n}{2}}{|X|}$ choices for~$X$ and
   $\binom{|X|}{x_2,x_3,\dotsc, x_{k-1}}$ choices for the partition
   of~$X$. Now we bound the number of choices for $Q$ in
   step~4. This number equals the number of (simple) bipartite graphs
   $(X,[k])$ with the degrees as in Step 4 (note that the
   sum of the degrees of vertices in $X$ is $k\ell-x_1$).
   One can obtain a bipartite multigraph with the degrees as
   in Step~$4$ by the following procedure: replace each vertex in
   $Q$ with a set of points of size equal to its degree; add a
   perfect matching between the points arising from vertices in $X$
   to points arising from vertices in~$[k]$; for each vertex,
   contract the set of points arising from it. Note that each simple
   bipartite graph corresponds to ${\prod_{i=2}^k
     (i!)^{x_i}\prod_{j=1}^k r_j!}$ matchings. Moreover, there are
   $(k\ell-x_1)!$ choices for perfect matchings in the procedure.
   Thus, after restricting to counting only perfect matchings corresponding to simple bipartite graphs, the number of choices for $Q$ is at most
   \begin{equation*}
     \frac{(k\ell-x_1)!}{\prod_{i=2}^k (i!)^{x_i}\prod_{j=1}^k r_j!}
     \leq
     \frac{(k\ell-x_1)!}{\prod_{j=1}^k r_j!}
     \leq
     \frac{(k\ell-x_1)!}{\left(\floor{\frac{k\ell-x_1}{k}}!\right)^{k}}
     \leq
     \beta^{k\ell-x_1},
   \end{equation*}
   for a constant $\beta$ (depending only on $k$) by Stirling's approximation. Thus, the
 number of inverse switchings applicable on each ${\bf i}\in \Kcal'$ is
 at most
   \begin{equation}
     \begin{split}
       & \binom{k\ell-x_1-1}{k-1}
       \ell^{\sum_{1\le j\le k}(r_j-1)} \binom{\binom{n}{2}}{|X|}
       \binom{|X|}{x_2,x_3,\dotsc, x_{k-1}} \beta^{k\ell-x_1} \\
       &\leq {(k\ell-x_1)^{k-1}
         \ell^{k\ell-x_1-k}}\binom{n}{2}^{\sum_{r\geq
           2}x_r} \beta^{k\ell-x_1}=U(x_1,\dotsc, x_k),
  \end{split}
     \lab{eq:reverse-even}
\end{equation}
where we used that $\sum_{1\le j\le k}r_j = k\ell-x_1$. We will now proceed to bound the ratio
\begin{equation*}
  \frac{\sum_{\iseq\in
        \Kcal(x_1,\dotsc, x_k)} \cp(\iseq)}{\sum_{\iseq'\in \Kcal'}
      \cp(\iseq')},
\end{equation*}
for $(x_1,\dotsc, x_k)\in\Xcal$. Construct a bipartite multigraph $R$ with bipartition
$(\Kcal(x_1,\dotsc,x_k), \Kcal')$ such that $\iseq\in
\Kcal(x_1,\dotsc, x_k)$ and $\iseq'\in\Kcal'$ are adjacent if there
is a switching mapping $\iseq$ to $\iseq'$ and the number of edges
joining them is the number of such
switchings. By~\myeqref{eq:switch-even}, the degree of any vertex in
$\Kcal(x_1,\dotsc, x_k)$ is at least $L(x_1,\dotsc, x_k)$, and, on the
other hand, by~\myeqref{eq:reverse-even}, the degree of any vertex in
$\Kcal'$ is at most $U(x_1,\dotsc, x_k)$. Moreover, for any
$\iseq\in \Kcal(x_1,\dotsc, x_k)$ and $\iseq'\in\Kcal'$, we have that
\begin{equation}
  \frac{\cp(\iseq)}{\cp(\iseq')}
  \leq \frac{\prod_{C\in
      \Ccal(i)}p^{\cm_C}}{(p^{2\ell-1}-p^{2\ell})^{k/2}} =
  \frac{p^{\sum_{r=1}^\ell x_r}}{p^{k\ell-k/2}(1-p)^{k/2}}
  \eqdefinv p(x_1,\dotsc, x_k)
  \lab{eq:function-even-switching}.
\end{equation}
Let $N(\iseq)$ denote the set of neighbours of $\iseq$ in the bipartite multigraph $R$. For
each $\iseq\in\Kcal(x_1,\dotsc, x_k)$, we have then
\begin{equation*}
  \cp(\iseq)
  \leq
  \frac{p(x_1,\dotsc, x_k) \sum_{\iseq'\in N(\iseq)}
    \cp(\iseq')}{|N(\iseq)|}
  \leq\frac{p(x_1,\dotsc, x_k) \sum_{\iseq'\in N(\iseq)}
    \cp(\iseq')}{L(x_1,\dotsc,x_k)}
  \end{equation*}
and so
\begin{equation*}
  \begin{split}
    \sum_{\iseq \in\Kcal(x_1,\dotsc, x_k)} \cp(\iseq)
    &\leq
    \frac{p(x_1,\dotsc, x_k)}{L(x_1,\dotsc, x_k)}
    \sum_{\iseq\in \Kcal(x_1,\dotsc, x_k)} \sum_{\iseq'\in N(\iseq)}
      \cp(\iseq')
    \\
    &=
    \frac{p(x_1,\dotsc, x_k)}{L(x_1,\dotsc, x_k)}\sum_{\iseq'\in \Kcal'}
    \card{N(\iseq')}\cp(\iseq')
    \\
    &\leq
    \frac{p(x_1,\dotsc, x_k) U(x_1,\dotsc,x_k)}{L(x_1,\dotsc,x_k)} \sum_{\iseq'\in
    \Kcal'}\cp(\iseq').
  \end{split}
\end{equation*}
Recall
from~\eqref{eq:L} and~\eqref{eq:U} that
\begin{equation*}
  \begin{split}
    &L(x_1,\dotsc, x_k)={\left(\frac{1}{2}
        \binom{n}{2}\right)^{k\ell-x_1-k/2}},
    \\
    &U(x_1,\dotsc, x_k)= {(k\ell-x_1)^{k-1}
         \ell^{k\ell-x_1-k}}\binom{n}{2}^{\sum_{r\geq
           2}x_r}\beta^{k\ell-x_1}.
  \end{split}
\end{equation*}
Thus,
\begin{equation}
  \begin{split}
    \frac{\sum_{\iseq\in
      \Kcal(x_1,\dotsc, x_k)} \cp(\iseq)}{\sum_{\iseq'\in \Kcal'}
    \cp(\iseq')}
  &\leq
  \frac{U(x_1,\dotsc,x_k)}{L(x_1,\dotsc, x_k)}\cdot p(x_1,\dotsc, x_k)
  \\
&\le
  \frac{\ell^{k\ell-k-x_1}(k\ell-x_1)^{k-1}\beta^{k\ell-x_1}\binom{n}{2}^{\sum_{r\geq 2}x_r}}
    {\left(\frac{1}{2} \binom{n}{2} \right)^{k\ell-x_1-k/2}}
    \frac{p^{\sum_{r=1}^\ell x_r}}{p^{k\ell-k/2} (1-p)^{k/2}}\\
    &=
    \frac{\ell^{k\ell-k-x_1}(k\ell-x_1)^{k-1}\beta^{k\ell-x_1} \left(\binom{n}{2} p\right)^{\sum_{r\geq 2}x_r}}
    {\left(\frac{1}{2} \binom{n}{2}p \right)^{k\ell-x_1-k/2}}
    \frac{1}{(1-p)^{k/2}}
    \\
    &=
    O\left(
      \frac{(2\beta \ell)^{k\ell-k-x_1}(k\ell-x_1)^{k-1} }
    {\left( \binom{n}{2}p \right)^{k\ell-k/2-\sum_{i=1}^k x_i}}
  \right),
  \end{split}
\lab{eq:ratio-even}
   \end{equation}
   where the last equation holds because $1-p=\Omega(1)$ and $k$
     is fixed. Now we will bound
   \begin{equation*}
     \sum_{(x_1,\dotsc,x_k)\in \Xcal}\frac{\sum_{\iseq\in
         \Kcal(x_1,\dotsc, x_k)} \cp(\iseq)}{\sum_{\iseq'\in \Kcal'}
       \cp(\iseq')}.
   \end{equation*}
   We partition $\Xcal$ into two sets $\Xcal_1$ and $\Xcal_2$. Let
   $\Xcal_1$ be subset of $\Xcal$ such that $(x_1,\dotsc, x_k)\in
   \Xcal_1$ if $x_1 \leq k\ell-k-1$ and let $\Xcal_2$ be subset of
   $\Xcal$ such that $(x_1,\dotsc, x_k)\in
   \Xcal_2$ if $x_1 =
   k\ell-k$. Note that $\Xcal = \Xcal_1 \cup \Xcal_2$ since $x_1
     \leq k\ell-k$ for all $(x_1,\dotsc,x_k)\in \Xcal$.

   Recall that $\sum_{i=1}^k x_i = k\ell$ for all
   $(x_1,\dotsc,x_k)\in\Xcal$. Obviously, given the value of $x_1$,
 the number of nonnegative integral vectors $(x_2,\dotsc, x_k)$ such
 that $\sum_{r=2}^{k} r x_r = k\ell-x_1$ is
 $O(k^{(k\ell-x_1)/2})$, as $\sum_{ r=2}^{ k}x_r \le
 (k\ell-x_1)/2$. Together with~\myeqref{eq:ratio-even}, this implies
 that
\begin{equation*}
  \begin{split}
    \sum_{(x_1,\dotsc, x_k)\in\Xcal_1} &\frac{\sum_{\iseq\in
        \Kcal(x_1,\dotsc, x_k)} \cp(\iseq)}{\sum_{\iseq'\in \Kcal'}
      \cp(\iseq')}
    \\&=
    O\left(
      \sum_{x_1=0}^{k\ell-k-1}
    k^{(k\ell-x_1)/2}
      \frac{(2\beta \ell)^{k\ell-k-x_1}(k\ell-x_1)^{k-1} }
    {\left( \binom{n}{2}p \right)^{(k\ell-x_1-k)/2}}
   \right)\\
   &=
   O\left(
      \sum_{x_1=0}^{k\ell-k-1}
          \frac{(2k\beta \ell)^{k\ell-k-x_1}(k\ell-x_1)^{k-1} }
    {\left( \binom{n}{2}p \right)^{(k\ell-x_1-k)/2}}
   \right).
  \end{split}
\end{equation*}
Let
\begin{equation*}
  g(x_1) = \frac{(2\beta k\ell)^{k\ell-k-x_1}(k\ell-x_1)^{k-1}}
  {\left(\binom{n}{2}p \right)^{(k\ell-x_1)/2-k/2}}.
\end{equation*}
For $x_1 = k\ell-k-1$,
\begin{equation*}
  g(x_1)
  =
  \frac{(2 k\beta\ell)(k+1)^{k-1}}
  {\left(\binom{n}{2}p \right)^{1/2}}
=
O\left(
\frac{\ell}{n\sqrt{p}}
\right)
=
o(1),
\end{equation*}
since $\ell^2=o(n^2p)$. Moreover, for $x_1 < k\ell-k-1$, using
$\ell^2=o(n^2p)$ and $k\ell-x_1 > k+1$,
\begin{equation*}
  \frac{g(x_1)}{g(x_1+1)}
  =
  \frac{(2 k\beta\ell)}
  {\left(\binom{n}{2}p \right)^{1/2}}
  \cdot
  \left(\frac{k\ell-x_1}{k\ell-x_1-1}\right)^{k-1}
  \hspace{-5pt}
  \leq
  \frac{(2 k\beta\ell)}
  {\left(\binom{n}{2}p \right)^{1/2}}
  \cdot 2^{k-1}
=
O\left(
\frac{\ell}{n\sqrt{p}}
\right)
  = o(1),
\end{equation*}
since $\ell^2=o(n^2p)$ and both $k$ and $\beta$ are fixed constants. This shows that
\begin{equation}
  \lab{x1-small}
  \sum_{(x_1,\dotsc, x_k)\in\Xcal_1} \frac{\sum_{\iseq\in
      \Kcal(x_1,\dotsc, x_k)} \cp(\iseq)}{\sum_{\iseq'\in \Kcal'}
    \cp(\iseq')}
  = o(1).
\end{equation}
Now we deal with the case $(x_1,\dotsc, x_k)\in \Xcal_2$, i.e.,
$x_1=k\ell-k$. We have that $\sum_{i=2}^k ix_i = k$.  This implies
$\sum_{i=2}^{k} x_i \leq k/2$. But note that the only way
$\sum_{i=2}^{k} x_i = k/2$, would be $x_2=k/2$ and $x_i=0$ for $i\geq
3$ and $\Kcal(x_1,\dotsc, x_k)$ would be empty (since all such ${\bf
  i}$ are in $\Kcal'$) and there is nothing to prove in this
case. Thus, we can assume $\sum_{i=2}^{k} x_i \leq k/2-1$.  Using this
fact together with that the number of choices for $(x_2,\dotsc,
x_k)\in\setN$ such that $\sum_{i=2}^{k} i x_i = k$ is $O(1)$
and~\myeqref{eq:ratio-even},
\begin{equation}
\lab{x1-big}
  \begin{split}
    \sum_{(x_1,\dotsc, x_k)\in\Xcal_2}
    \frac{\sum_{\iseq\in\Kcal(k\ell-k,x_2,\dotsc, x_k)} \cp(\iseq)}{\sum_{\iseq'\in \Kcal'}
      \cp(\iseq')}
    &=
    O\left(
      \frac{(2\beta \ell)^{k\ell-k-x_1}(k\ell-x_1)^{k-1}}
    {\left( \binom{n}{2}p \right)^{k\ell-\sum_{i=1}^k x_i-k/2}}
  \right)
  \\
  &=
  O\left(
      \frac{1}
      {\binom{n}{2}p}
  \right)
 =
o(1),
\end{split}
\end{equation}
where in the last equality we use the fact that $n^2p\to\infty$.

Equation~\myeqref{x1-small} and Equation~\myeqref{x1-big} imply
\begin{equation*}
  \sum_{(x_1,\dotsc, x_k)\in\Xcal} \frac{\sum_{\iseq\in
      \Kcal(x_1,\dotsc, x_k)} \cp(\iseq)}{\sum_{\iseq'\in \Kcal'}
    \cp(\iseq')}
  = o(1).
\end{equation*}


Now we have completed the proof of Lemma~\ref{l:sub-significant} for
even integers $k\ge 2$. The proof for odd integers $k$ is analogous to
that for even $k$, with slightly more complication due to the treatment
of the single component that is a chained triple or a flower with $3$
petals that would appear in $\Ccal({\bf i})$, where ${\bf i}\in
\Kcal'$ for odd $k$. Thus, we will only give a sketch of the proof.
Slightly different from the case where $k$ is even, we split
$\Kcal\setminus \Kcal'$ into $\Kcal_0$ and $\Kcal_1$, where $\Kcal_1$
contains all $\Kcal(x_1,\ldots,x_k)$ with $x_1\le k(\ell-1)-1$,
whereas $\Kcal_0$ corresponds to $x_1=k(\ell-1)$. We also
partition $\Kcal'$ into $\Kcal'_0$ and $\Kcal'_1$ such that $\Kcal'_0$
contains all ${\bf i}\in \Kcal'$ for which the only component in
$\Ccal({\bf i})$ having three matchings is a flower with $3$ petals
(and thus $\Kcal'_1$ is with respect to the structure of a chained
triple). Obviously, if ${\bf i}\in \Kcal(x_1,\ldots,x_k)$ where
$x_1=k(\ell-1)$, then each component $C$ in $\cup_{1\le j\le k}
M_{i_j}$ is a flower and at least one flower has at least $3$ petals
because $k$ is odd. It is not difficult to verify (use switchings
  to remove all but one flower with 3 petals and form the
  other
  partial matchings into kissing pairs; the analysis for these
  switchings is similar but simpler compared with the above analysis
  for even $k$) that $\sum_{{\bf i}\in
  \Kcal_0}\cp(\iseq)=o(\sum_{{\bf i}\in \Kcal'_0}\cp(\iseq))$. Next we
show that
\begin{equation}
\sum_{{\bf i}\in \Kcal_1}\cp(\iseq)=o\left(\sum_{{\bf i}\in \Kcal'_1}\cp(\iseq)\right),\lab{triple}
\end{equation}
which will complete the proof of Lemma~\ref{l:sub-significant} for odd $k$.

Same as in the case of even $k$, we will define a switching from
$\Kcal(x_1,\dotsc, x_k)$ to~$\Kcal'_1$ where $x_1 \le k(\ell-1)-1$. As
the analysis is almost the same to the previous case, we omit the
calculations and just describe the switching operation and its
inverse. Let $(i_1,\dotsc, i_k)\in \Kcal(x_1,\dotsc, x_k)$.
Similarly to the even case, for each $j$, let $I_j$ denote
  the set of edges that $M_{i_j}$ shares with $\cup_{j'\neq
    j}M_{i_{j'}}$.
Note
  that, since $x_1\leq k(\ell-1)-1$, there exists a matching $M_{i_j}$
  such that $\card{I_{j}}\geq 2$.
 \begin{enumerate}
 \item \textit{(Delete shared edges)} For each $j\in[k]$, let
   $M_{i_j}' := M_{i_j}\setminus I_j$.
 \item \textit{(Choose a matching that will share two edges in the
     chained triple)} Choose $t\in[k]$ such that $|M'_{i_t}|\leq
   \ell-2$.
  \item \textit{(Obtain pairwise disjoint partial matchings each of
      size $(\ell -1)$ but one of size $(\ell-2)$)} For each
    $j\in[k]$, let $h_j=|I_j|-1$ if $j\neq t$ and $|I_j|-2$ if $j=t$.
    For each $j\in[k]$, choose edges $a_1^{(j)},\dotsc, a_{h_j}^{(j)}$,
    one after the other, so that
    $M_{i_j}'\cup\set{a_1^{(j)},\dotsc,a_{h_j}^{(j)}}$ is an
    $(\ell-1)$-matching (or $(\ell-2)$-matching if $j=t$) and
    $a_r^{(j)} \not\in\left(\bigcup_{j'=0}^{k} M_{i_{j'}}'\right) \cup
    \left(\bigcup_{j'<j}
      \set{a_1^{(j')},\dotsc,a_{h_{j'}}^{(j')}}\right)$ for every~$r$.
    Let $M_{i_j}'' \eqdef M_{i_j}' \cup
    \set{a_1^{(j)},\dotsc,a_{h_j}^{(j)}}$.
  \item \textit{(Build kissing pairs and chained triple)} Choose a
    graph $P$ on $[k]$ such that the degree of $t$ is~$2$ and the
    degree of any other vertex is~$1$.
  \item \textit{(Choose shared edges in kissing pairs and chained
      triple)}Let $e_1,\dotsc, e_{(k+1)/2}$ be an
    enumeration of the edges in~$P$. Let $t_1 < t_2$ be such that
    $e_{t_1}$ and $e_{t_2}$ are the edges incident to $t$. For each
    $r\in [(k+1)/2]\setminus\set{t_2}$, let $j$ and $j'$ denote the ends of $e_r$ and choose an
    edge $f_r$ such that $M_{i_j}''' := M_{i_j}''\cup\set{f_r}$ and
    $M_{i_{j'}}''' := M_{i_{j'}}''\cup\set{f_r}$ are matchings and
    $f_r\not\in \left(\bigcup_{a=1}^k M_{i_a}''\right)\cup
    \left(\bigcup_{b=1}^{r-1}f_b\right)$. For $r=t_2$, let $j$ denote
    the end of $e_r$ other than $t$ and choose an edge $f_r$ such that
    $M_{i_j}''' := M_{i_j}''\cup\set{f_r}$ and
    $M_{i_{t}}'''\cup\set{f_r}$ are matchings and $f_r\not\in
    \left(\bigcup_{a=1}^k M_{i_a}''\right)\cup \left(\bigcup_{b\neq
        t_2}f_b\right)$. Redefine $M_{i_{t}}'''$ by including~$f_r$.
  \item \textit{(Update the indices)} Let $\iseq'''$ be the new tuple
    such that for every $1\le j\le k$, $M_{i'''_j}=M'''_{i_j}$.
   \end{enumerate}
Similarly to the even case, the number of applicable switchings for any ${\bf i}\in \Kcal(x_1,\ldots,x_k)$ is at least
   \begin{equation*}
     \left(
       \frac{1}{2} \binom{n}{2}
     \right)^{\sum_{j} h_j + \frac{k-1}{2}+1}
     =
     \left(
       \frac{1}{2} \binom{n}{2}
     \right)^{k\ell-x_1-\frac{k+1}{2}}.
   \end{equation*}
Now we describe the inverse switching. Let $(i_1,\dotsc,
   i_k)\in\Kcal'_1$.
   \begin{enumerate}
   \item \textit{(Find the index of the matching in the chained triple
       that shares two edges)} Let $t\in[k]$ such that $|M_{i_t} \cap (\bigcup_{j\neq t}
     M_{i_j})|=2$.
   \item\textit{(Choose the number of edges shared by each matching)}
     Choose an integral vector $\rseq = (r_1,\dotsc, r_k)$ so that
     $\sum_{j=1}^{k} r_j = k\ell-x_1$ and $r_j\geq 1$ for every~$j$
     and $r_t\geq 2$.
   \item \textit{(Make room for shared edges)} For each $j\in[k]$,
     let $h_j = r_j-1$ if $j\neq t$ and $h_t = r_t-2$.  Choose $h_j$
     edges one by one without repetition in $M_{i_j}\setminus
     I_j$. Let $M_{i_j}'$ be $(\ell-r_j)$-matching obtained from
     $M_{i_j}$ by deleting these edges and the edges in $I_j$.
   \item \textit{(Choose shared edges)} Choose a set $X$ of
     $\sum_{i=2}^k x_i$ edges that are not in $\bigcup_{j=1}^k
     M_{i_j}'$. Partition $X$ into sets $X_2,\dotsc, X_k$ such that
     $|X_i|= x_i$ for every~$i$.
   \item \textit{(Assign shared edges)} Construct a bipartite graph
     $Q$ with bipartition $(X,[k])$ so that the degree of each $e\in
     X_i$ is $i$ and the degree of each $j\in[k]$ is
     $r_j$.  Let $M_{i_j}''$ be obtained from $M_{i_j}'$ by including
     the edges in $X$ that are adjacent to $j$ in~$Q$.
   \item \textit{(Update indices)} If each $M_{i_{j}}''$ is a $\ell$-matching, let $\iseq''$ be
     the new tuple such that for every $1\le j\le k$,
     $M_{i''_j}=M''_{i_j}$.
   \end{enumerate}
Similarly to the even case, the number of inverse switchings applicable to any ${\bf i}\in \Kcal'_1$ is at most
\begin{equation*}
     (k\ell-x_1)^{k-1}
     \ell^{k\ell-x_1-k-1}
     \binom{n}{2}^{\sum_{i=2}^k x_i}
     \beta^{k\ell-x_1},
   \end{equation*}
   for the same constant $\beta$ that is defined before.
   The same calculations as in the case of even $k$ (also by splitting
   the analysis into two cases $x_1=k(\ell-1)-1$ and $x_1\le
   k(\ell-1)-2$) give verification of~\eqref{triple}.  This completes
   the proof of Lemma~\ref{l:sub-significant}.
\end{proof}

\section{Proof of Theorem~\ref{t:super} and
  Theorem~\ref{t:nearperfect}}

\subsection{The log-normal paradigm}

Let $\S$ be a set of graphs on vertex set $[n]$ such that each graph
in $\S$ has $h$ edges (e.g.\ $\S$ is the set of $\ell$-matchings in
$K_{[n]}$). Let $s=|\S|$ and $X_n$ denote the number of graphs in $\S$
that are contained in a random graph ($\G(n,p)$ or $\G(n,m)$) as a
subgraph. Define
\begin{eqnarray}
 \mu_n&=&s\binom{N-h}{m-h}\Big/\binom{N}{m},\ \ \
\la_n=sp^h.\lab{mu-la}
\end{eqnarray}
Immediately we have
$$
\ex_{\G(n,m)} X_n=\mu_n,\ \ \ \ex_{\G(n,p)}X_n=\la_n.
$$
When $\limsup_{n\to\infty} h/m<1$ and
  $h^2=\Omega(m)$, we can further simplify $\mu_n$ and obtain
\begin{eqnarray}
  \mu_n&=&s\cdot\frac{[m]_{h}}{[N]_h}=s(m/N)^h\exp\left(-\frac{N-m}{mN}\frac{h^2}{2}+O(h^3/m^2)\right).\lab{mu}
\end{eqnarray}
 Given $i\ge 0$, let $F(i)=\{(G_1,G_2)\in
\S^2,\ |E(G_1)\cap E(G_2)|=i\}$ and let $f_i=|F(i)|$.

A slight generalisation of~\cite[Theorem 1]{G6}, with almost the same proof
(with only $f_j$ replaced by $f'_j$ and some equalities replaced by
asymptotic equalities), gives the following theorem.

\begin{thm}\lab{t:Gnm} Let $\mu_n$ be as in~\eqn{mu-la}.
  Suppose there is a sequence $(f'_j)_{j= 0}^h$ such that $f_j\sim
  f'_j$ uniformly for all $j\ge 0$. Let $r_j=f'_j/f'_{j-1}$ for all $1\le j\le
  h$.  Assume that $h^3=o(m^2)$, $h^2=\Omega(m)$, and, for
  $\rho(n)=h^2/m$ and some function $\gamma(n)$, the following
  conditions hold:
\begin{description}

\item{(a)} for all $K>0$ and for all $1\le j\le K\rho(n)$,
$$
r_j=\frac{h^2}{Nj}\paren[\bigg]{1+o\paren[\Big]{\frac{m}{h^2}}};
$$
\item{(b)} $r_j\le m/2N$ for all $4\rho(n)\le j\le \gamma(n)$;
\item{(c)} $t(n):=\sum_{j>\gamma(n)}f_j=o(\mu_n|\S|)$;
\end{description}
 Then, in $\G(n,m)$,
$$
X_n/\ex_{\G(n,m)}(X_n)\xrightarrow{p} 1
$$
as $n\to\infty$.

\end{thm}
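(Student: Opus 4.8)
The proof is that of~\cite[Theorem 1]{G6} with $f_j$ systematically replaced by $f'_j$ (legitimate since $f_j\sim f'_j$) and the exact identities there weakened to asymptotic ones; I only recall its structure. The plan is a second-moment computation in $\G(n,m)$. Since an ordered pair $(G_1,G_2)\in\S^2$ with $|E(G_1)\cap E(G_2)|=j$ has $|E(G_1)\cup E(G_2)|=2h-j$ and hence $\pr_{\G(n,m)}(G_1\cup G_2\subseteq G)=[m]_{2h-j}/[N]_{2h-j}$, summing over all such pairs gives the exact identity
\[
\ex_{\G(n,m)}(X_n^2)=\sum_{j=0}^{h}f_j\,\frac{[m]_{2h-j}}{[N]_{2h-j}},
\qquad\text{while}\qquad
\ex_{\G(n,m)}(X_n)=\mu_n .
\]
So it suffices to prove $\ex_{\G(n,m)}(X_n^2)=(1+o(1))\mu_n^2$, since Chebyshev's inequality then yields $X_n/\mu_n\xrightarrow{p}1$. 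Dividing by $\mu_n^2$ and writing $P_j:=\big([m]_{2h-j}/[N]_{2h-j}\big)\big/(\mu_n/s)^2$, the goal is $\sum_{j=0}^{h}(f_j/s^2)P_j\to1$.

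The heart of the argument is the ``bulk'' range $j=O(\rho(n))$. There one has $P_j/P_{j-1}=(N-2h+j)/(m-2h+j)=(N/m)(1+O(h/m))$ and $P_0=e^{-\beta^2}(1+o(1))$ by~\eqref{mu}, where $\beta^2:=\tfrac{N-m}{mN}h^2$; hence, using $h^3=o(m^2)$ and $h^2=\Omega(m)$ to absorb the $\Theta(\rho)$-fold product of error factors, $P_j=e^{-\beta^2}(N/m)^j(1+o(1))$ uniformly on this range. Meanwhile condition~(a) gives $r_j=\tfrac{h^2}{Nj}(1+o(1/\rho))$, so $f'_j/f'_0=\prod_{i\le j}r_i=\big((h^2/N)^j/j!\big)(1+o(1))$, and since $f_j\sim f'_j$ the $j$-th term of the sum is $(1+o(1))(f'_0/s^2)e^{-\beta^2}\rho^j/j!$; summing over $j\le K\rho(n)$ and letting $K\to\infty$ gives a bulk contribution of $(1+o(1))(f'_0/s^2)e^{\rho-\beta^2}=(1+o(1))(f'_0/s^2)e^{h^2/N}$. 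The decisive input is the trivial identity $\sum_{j=0}^h f_j=s^2$: applying the same three-way split to it (the bulk $\sim f'_0 e^{h^2/N}$, the rest negligible by~(b) and~(c)) forces $(f'_0/s^2)e^{h^2/N}\to1$, so the bulk contributes exactly $1+o(1)$ to $\ex_{\G(n,m)}(X_n^2)/\mu_n^2$.

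It remains to show the two non-bulk ranges contribute $o(\mu_n^2)$. For $j>\gamma(n)$ I would use the elementary bound $[m]_{2h-j}/[N]_{2h-j}\le[m]_h/[N]_h=\mu_n/s$ (valid for $j\le h$ because the left side is then a product of more factors lying in $(0,1)$), so this range contributes at most $(\mu_n/s)\,t(n)=(\mu_n/s)\cdot o(\mu_n s)=o(\mu_n^2)$ by~(c). For $K\rho(n)<j\le\gamma(n)$ one must keep the $(N/m)^j$ growth of $P_j$: from the full expansion, $P_j\le e^{\beta^2}(N/m)^j(1+o(1))$ uniformly (the $j$-dependent correction $\tfrac{N-m}{mN}(2hj-j^2/2)$ is bounded by $O(\beta^2)$), while condition~(b) (with $f_j\sim f'_j$) makes the weighted quantities $(f_j/s^2)(N/m)^j$ decay geometrically with ratio at most $\tfrac12+o(1)$; summing, using $f'_0\sim s^2 e^{-h^2/N}$, and applying Stirling's formula, this range contributes $\mu_n^2\cdot O\big(e^{\beta^2-h^2/N}(e/K)^{K\rho}\big)=o(\mu_n^2)$ for each fixed large~$K$. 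Adding the three ranges gives $\ex_{\G(n,m)}(X_n^2)=(1+o(1))\mu_n^2$, and Chebyshev's inequality finishes the proof.

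The step I expect to be the real obstacle is the bulk estimate, because it is a genuine cancellation and not a mere bound: the individual terms $f_j[m]_{2h-j}/[N]_{2h-j}$ are each of order $\mu_n^2$, and the limit $1$ emerges only after (i) the term-by-term ratios from~(a) reorganise the bulk into the Poisson series $e^{-\beta^2}\sum_j\rho^j/j!=e^{\rho-\beta^2}$, and (ii) the global count $\sum_j f_j=s^2$ identifies $f'_0/s^2\sim e^{-h^2/N}=e^{-(\rho-\beta^2)}$, which cancels it exactly. Keeping the error terms uniform over the whole bulk — making the per-step factors $(1+o(1/\rho))$ of~(a) and $(1+O(h/m))$ of $P_j/P_{j-1}$ survive $\Theta(\rho)$-fold multiplication — is precisely where the hypotheses $h^3=o(m^2)$ and $h^2=\Omega(m)$ are used, and is the only calculation-heavy point; the two non-bulk ranges are routine tail estimates.
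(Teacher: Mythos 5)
Your proposal is correct and matches the paper's approach: the paper itself proves this theorem only by appeal to the second-moment argument of~\cite[Theorem 1]{G6}, with $f_j$ replaced by $f'_j$ and equalities relaxed to asymptotic ones, which is exactly the structure (Chebyshev after splitting $\sum_j f_j[m]_{2h-j}/[N]_{2h-j}$ into the Poisson-like bulk $j=O(\rho)$, the geometrically decaying range handled by (b), and the tail handled by (c), normalised via $\sum_j f_j=s^2$) that you reconstruct. The only cosmetic caveat is that since $h^2=\Omega(m)$ allows $\rho=\Theta(1)$, the middle-range bound should be phrased as ``at most $\eps(K)\mu_n^2$ with $\eps(K)\to0$ as $K\to\infty$'' and combined with a limsup/liminf argument in $K$, rather than as $o(\mu_n^2)$ for each fixed $K$.
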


The following theorem can also be found in~\cite[Theorem 3]{G6}.
\begin{thm}
  \lab{t:Gnp} Assume $h^3=o(p^2n^4)$. 
  Let
  $\beta_n=h\sqrt{(1-p)/pN}$ and $\la_n =
  \ex_{\G(n,p)}{X_n}$. Assume further that
  $\liminf_{n\to\infty}\beta_n>0$. If, for all $m=pN+O(\sqrt{pN})$, we
  have that $X_n/\ex_{\G(n,m)}(X_n)\xrightarrow{p} 1$, then, in
  $\G(n,p)$,
  $$
  \frac{\ln(e^{\beta_n^2/2} X_n/\la_n)}{\beta_n}\xrightarrow{d}
  \N(0,1) \ \ \mbox{as}\ n\to\infty,
  $$
  where $\N(0,1)$ is the standard normal distribution.
\end{thm}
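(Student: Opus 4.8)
The plan is to realise $\G(n,p)$ as a mixture of the uniform models $\G(n,m)$ over $m=e(\G(n,p))\sim\mathrm{Bin}(N,p)$, and to transfer the concentration of $X_n$ in $\G(n,m)$ (the hypothesis) into a central limit theorem in $\G(n,p)$ by tracking how the conditional mean $\mu(m):=\ex_{\G(n,m)}(X_n)=s[m]_h/[N]_h$ varies as $m$ ranges over the window $pN+O(\sqrt{pN})$. First I would fix a monotone coupling of all these models: reveal the $N$ potential edges in a uniformly random order, let $G_m$ consist of the first $m$ of them, and sample $m^\ast\sim\mathrm{Bin}(N,p)$ independently, so that $G_{m^\ast}\sim\G(n,p)$, $G_m\sim\G(n,m)$, and $X_n(G_m)$ is nondecreasing in $m$. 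Since $z\mapsto\bigl(\ln(e^{\beta_n^2/2}z/\la_n)\bigr)/\beta_n$ is increasing on $(0,\infty)$, it suffices to prove $\pr(X_n\le\theta_x)\to\Phi(x)$ for every fixed $x\in\setR$, where $\theta_x:=\la_n e^{-\beta_n^2/2+\beta_n x}$ and $\Phi$ is the standard normal distribution function. A preliminary observation: from $h^2=\Omega(pn^2)$ and $h^3=o(p^2n^4)$ one checks that $h=o(pN)$ and $pN\to\infty$, and then from $\liminf\beta_n>0$ that $\sigma:=\sqrt{Np(1-p)}=\beta_n\,(pN/h)\to\infty$, so the central limit theorem applies to $m^\ast$.

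The second step is to pin down $\mu(m)$ near $m=pN$. A Taylor expansion of $\ln\mu(m)=\ln s+\sum_{i=0}^{h-1}\ln(m-i)-\ln[N]_h$, valid since $h=o(pN)$, gives, uniformly over $|m-pN|=O(\sqrt{pN})$, that $\ln\mu(m)=\ln\mu(\lfloor pN\rfloor)+(h/pN)(m-pN)+o(1)$; and~\eqn{mu} evaluated at $m=\lfloor pN\rfloor$ gives $\mu(\lfloor pN\rfloor)=\la_n e^{-\beta_n^2/2}(1+o(1))$, using $\beta_n^2=h^2(1-p)/(pN)$ and that the error $O(h^3/(pN)^2)$ is $o(1)$. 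Combining these with the identities $(pN/h)\beta_n=\sigma$ and $pN/h=\sigma/\beta_n=O(\sigma)$, one obtains that, for any fixed $c>0$, the inequality $\mu(m)\le c\,\theta_x$ is equivalent to $m\le pN+\sigma\bigl(x+(\ln c)/\beta_n+o(1)\bigr)$.

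The last step combines these with the monotone coupling. Fix $\epsilon>0$, let $m_-$ be the largest integer with $\mu(m_-)\le(1-\epsilon)\theta_x$ and $m_+$ the smallest with $\mu(m_+)\ge(1+\epsilon)\theta_x$; by the previous step $m_\pm=pN+\sigma\bigl(x+\ln(1\pm\epsilon)/\beta_n+o(1)\bigr)$, so $m_-(n)$ and $m_+(n)$ lie in the window $pN+O(\sqrt{pN})$ and the hypothesis applies to both sequences. If $m^\ast\le m_-$ then $X_n=X_n(G_{m^\ast})\le X_n(G_{m_-})=(1+o_p(1))\mu(m_-)\le(1+o_p(1))(1-\epsilon)\theta_x<\theta_x$ with high probability, so $\pr(X_n\le\theta_x)\ge\pr(m^\ast\le m_-)-o(1)$; symmetrically, $\pr(X_n\le\theta_x)\le\pr(m^\ast\le m_+-1)+o(1)$. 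Feeding in the central limit theorem for $m^\ast$ (mean $pN$, standard deviation $\sigma$) and the bound $\beta_n\ge\beta_0>0$, these become $\Phi\bigl(x+\ln(1-\epsilon)/\beta_0\bigr)+o(1)$ and $\Phi\bigl(x+\ln(1+\epsilon)/\beta_0\bigr)+o(1)$. Letting $n\to\infty$ and then $\epsilon\to0$, continuity of $\Phi$ squeezes $\pr(X_n\le\theta_x)\to\Phi(x)$, which is exactly the claim.

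I expect this last step to be the main obstacle. Over a window of width $\Theta(\sqrt{pN})$ the conditional mean $\mu(m)$ can change by a constant factor --- precisely in the regime $\beta_n=\Theta(1)$, where $\beta_n$ does not tend to infinity --- so $X_n$ cannot simply be replaced by the single value $\mu(\lfloor pN\rfloor)$; it is the monotone coupling that lets one sandwich $X_n$ between $X_n(G_{m_-})$ and $X_n(G_{m_+})$, and only the final limit $\epsilon\to0$ makes the two Gaussian bounds coalesce. Everything else --- the Taylor expansion of $\ln\mu$, the evaluation of $\mu(\lfloor pN\rfloor)$ via~\eqn{mu}, and the verification that $h=o(pN)$ and $Np(1-p)\to\infty$ --- is routine bookkeeping with the hypotheses $h^2=\Omega(pn^2)$ and $h^3=o(p^2n^4)$. (One could instead compute characteristic functions directly in $\G(n,m)$ and then integrate over $m$, as in the original source, but the monotone sandwich is cleaner.)
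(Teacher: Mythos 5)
Your argument is correct, but note that the paper does not actually prove Theorem~\ref{t:Gnp}: it is imported directly from \cite[Theorem 3]{G6}, so the only possible comparison is with that external source, which (as you note) transfers concentration in $\G(n,m)$ to $\G(n,p)$ by conditioning on the number of edges and handling the mixture over $m$ directly, rather than via a monotone coupling. Your route --- couple all the models $\G(n,m)$ through a uniformly random edge ordering, sandwich $X_n$ between $X_n(G_{m_-})$ and $X_n(G_{m_+})$, expand $\ln\mu(m)$ uniformly over the window $m=pN+O(\sqrt{pN})$ using~\eqn{mu} and $h=o(pN)$, $h^3=o((pN)^2)$, and then feed in the CLT for the binomial number of edges --- is a complete and self-contained proof; a nice feature is that it invokes the hypothesis only for the two deterministic sequences $m_\pm(n)$, which sidesteps the uniformity-over-the-window issue that a naive conditioning argument has to address, and the final $\epsilon\to 0$ limit correctly absorbs the fact that $\mu(m)$ moves by a constant factor across the window when $\beta_n=\Theta(1)$. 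Two small points are worth a sentence in a write-up: (i) $\mu(m)=s[m]_h/[N]_h$ is increasing in $m$ and your $\theta_x=o(s)$ (since $p^h\le e^{-h(1-p)}\le e^{-\beta_0^2\,pN/h}\to 0$), so the thresholds $m_\pm$ exist, lie in the window, and are characterised by the expansion even though the expansion is only valid inside the window; (ii) evaluating $\pr(m^\ast\le m_\pm)$ at moving arguments needs the uniform form of the binomial CLT (Berry--Esseen, or P\'olya's theorem on uniform convergence to a continuous limit), which is available because $\sigma=\beta_n\,pN/h\to\infty$. Your preliminary reductions ($h\to\infty$, $pN=\omega(h)$, and $\mu(\lfloor pN\rfloor)=\la_n e^{-\beta_n^2/2}(1+o(1))$) all check out under $h^2=\Omega(pn^2)$, $h^3=o(p^2n^4)$ and $\liminf\beta_n>0$.
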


We will apply~Theorems~\ref{t:Gnm} and~\ref{t:Gnp} with $\Scal = \Mcal
=\set{M_1,\dotsc, M_s}$, the set of all $\ell$-matchings of $K_{[n]}$.
For $0\leq i\leq \ell$, let $F(i)$ be the set of pairs of
$\ell$-matchings $(M, M')$ in $K_{[n]}$ such that $\card{M\cap M'} =
i$ and let $f_i = \card{F(i)}$. For any element $g=(M,M')$, let $n_0=n_0(g)$ denote the number of
vertices that are incident with neither $M$ nor $M'$; $n_1=n_1(g)$ the
number of vertices incident with exactly one of $M$ and $M'$;
$n_2=n_2(g)$ the number of vertices incident with both $M$ and $M'$.
Then we immediately have that $n = n_0+n_1+n_2$ and $4\ell = 2n_2 +
n_1$. This implies that
  \begin{equation}
  n_1 = 4\ell-2n_2, \ \  \mbox{and}\ \ n_0 = n-4\ell+n_2. \lab{n's}
  \end{equation}
  We will constantly use the relation~\eqn{n's} in the following
  proofs. Now we close this section by proving a lemma that will be
  used to verify condition (c) of Theorem~\ref{t:Gnm}
  in
  the proofs of Theorems~\ref{t:super} and~\ref{t:nearperfect}.
\begin{lemma}
  \lab{l:tail} Let $m=\omega(n)$. Suppose that $\ell^3 = o(m^2)$ and
  $\ell^2=\Omega(m)$. For $\delta > 4/5$, we have that $\sum_{i\ge
    \delta \ell} f_i = o(s\mu_n)$.
\end{lemma}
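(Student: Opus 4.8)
The plan is to estimate $f_i$ for $i \ge \delta\ell$ directly by counting pairs $(M,M')$ with $\card{M\cap M'}=i$, and to show that the resulting sum is dominated by $s\mu_n$. First I would fix the overlap parameter $i$ and sum over the possible values of $n_2=n_2(g)$, the number of vertices incident with both matchings; since the $i$ common edges already force $2i$ such vertices, we have $n_2 \ge 2i$, and writing $n_2 = 2i + 2t$ with $t\ge 0$ counts the number of extra vertices where a (distinct) edge of $M$ meets a (distinct) edge of $M'$. A pair in $F(i)$ with a given value of $t$ can be built by: choosing the $i$ common edges as an $i$-matching ($\binom{n}{2i}(2i-1)!!$ ways), choosing which $t$ edges of the remaining $\ell-i$ edges of $M$ and which $t$ edges of the remaining $\ell-i$ edges of $M'$ meet (at most $\binom{\ell}{t}^2$ ways for the index sets and at most $(\ell)^{O(t)}$-type factors, but crude bounds suffice), choosing the $O(\ell)$ vertices supporting the non-common part of each matching, and so on. The upshot is a bound of the shape
\[
f_i \le \sum_{t\ge 0} \binom{n}{2i}(2i-1)!!\,\Delta_{1}^{\,2(\ell-i)}\,\binom{\ell}{t}^{2}\,(C\ell^2/n)^{t},
\]
where $\Delta_1 = \binom{n-2}{2\ell-2}(2\ell-3)!!$ as in~\eqn{eq:nD}; the geometric-type factor $(C\ell^2/n)^t$ comes from the cost of forcing an extra coincidence, and since $\ell^2/n = o(\ell)$... actually one must be slightly careful and keep the $\binom{\ell}{t}$ factors, but because $\ell^3=o(m^2)$ the series in $t$ converges and is dominated by its $t=0$ term.

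Next I would compare this with $s\mu_n$. Using~\eqn{mu} we have $\mu_n = s(m/N)^\ell \exp(O(\ell^2/m))$, and $\ell^2/m = O(1)$ is not assumed — rather $\ell^2 = \Omega(m)$ and $\ell^3 = o(m^2)$ give $\ell^2/m = o(m) $, hence the exponential correction is $e^{o(m)}$, which I will need to track but which is harmless against the polynomial-in-$n$ gains below. The key ratio to control is $f_i / (s p^\ell \cdot \text{stuff})$; equivalently, I would compare $f_i$ with $f_\ell \cdot (\text{a power of } p)$ — note $f_\ell = s$ since the only pair with full overlap is $(M,M)$ — via the ratio $r_j = f_j/f_{j-1}$. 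For $j$ in the relevant range ($j \ge \delta\ell$ with $\delta > 4/5$) a direct computation of $r_j$ from the dominant ($t=0$) term gives roughly
\[
r_j \;\sim\; \frac{j \cdot (2\ell-2j+1)(2\ell-2j+3)\cdots}{(\text{choices of a new edge in each matching})} \;=\; O\!\left(\frac{\ell^2}{(n-O(\ell))^2}\cdot \frac{1}{\text{something}}\right),
\]
and the point is that for $j < \ell$ each step from $f_{j-1}$ to $f_j$ loses a factor that is $o(1)$ once $j$ is within $\Theta(\ell)$ of $\ell$ and $\ell = o(n)$... but here $\ell$ can be as large as $n/2$, so $\ell/n$ need not be small. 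This is where the hypothesis $\delta > 4/5$ enters: when $i \ge \tfrac45\ell$ one has $n_0 = n - 4\ell + n_2 \ge n - 4\ell + 2i \ge n - 4\ell + \tfrac85\ell = n - \tfrac{12}{5}\ell$, and combined with $\ell \le n/2$ this keeps $n_0$ bounded below by a positive fraction of $n$ only in part of the range — so the real content is a careful bound on $\binom{n_0}{\cdot}$ type factors showing $\sum_{i \ge \delta\ell} f_i$ is geometrically summable with ratio bounded away from $1$ and first term already $o(s\mu_n)$.

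I expect the main obstacle to be precisely this last comparison: bounding $f_i$ for $i$ in the window $[\delta\ell, \ell)$ tightly enough that, after dividing by $s\mu_n = s^2 (m/N)^\ell e^{O(\ell^2/m)}$, the sum over this window is $o(1)$. The naive bound $f_i \le s \cdot (\text{number of } M' \text{ with } \card{M\cap M'}=i)$ loses too much; one needs to exploit that when $M$ and $M'$ agree on $\ge \tfrac45\ell$ edges, the symmetric difference lives on $O(\ell - i)$ vertices outside a common core, which drastically cuts the count. Concretely I would: (i) show $f_i \le s \binom{\ell}{i} \cdot \big(\Delta_1'\big)^{\ell-i} \cdot 2^{O(\ell-i)}$ where $\Delta_1'$ counts extensions avoiding the core, of size $O(n\ell)$; (ii) divide by $s\mu_n$ and use $\mu_n \ge s(m/N)^\ell e^{-O(\ell^2/m)}$ together with $p = m/N(1+o(1))$; (iii) observe the resulting per-term bound is $\big(C\ell^2/(pn^2)\big)^{\ell-i}$-like, which is $o(1)$ raised to the power $\ell - i \ge 1$ because $\ell^2 = o(pn^2) \Leftarrow \ell^2 = o(m) \cdot (N/n^2)^{-1}$... and here $\ell^2 = \Omega(m)$ cuts the wrong way, so instead I use $\ell^3 = o(m^2)$, i.e. $\ell^2/m = o(m/\ell) = o(n^2 p/\ell)$, to get the per-step ratio $o(1)$ uniformly; (iv) sum the geometric series. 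The bookkeeping in (iii) — reconciling $\ell^2 = \Omega(m)$ (which prevents a naive "$\ell$ small" argument) with $\ell^3 = o(m^2)$ (which is what actually saves us) — is the delicate point, and I would handle it by always expressing bounds in terms of $\ell - i$ (the codimension of the overlap) so that even a single missing common edge already contributes a factor tending to $0$.
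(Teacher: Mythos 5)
Your counting idea (fix $M$, choose which $\ge\delta\ell$ edges are shared, extend to $M'$ by a matching avoiding them) is in fact how the paper bounds the whole tail $\sum_{i\ge I}f_i$, $I=\lceil\delta\ell\rceil$, in a single stroke; but your proposal never carries out the one comparison that constitutes the lemma, and the two places where you say how the hypotheses will be used are not correct. First, the factor you must absorb when replacing $\mu_n$ by $sP^\ell$ (with $P=m/N$) is $\exp\left((1-P)\ell^2/2m+O(\ell^3/m^2)\right)$, and it is \emph{not} ``harmless against the polynomial-in-$n$ gains'': under the hypotheses $\ell^2/m$ may be polynomially large (e.g.\ $\ell=n/2$, $m=n^{3/2}$ gives $\ell^2/m=\Theta(\sqrt{n})$), so it can only be beaten by a gain exponential in $\omega(\ell^2/m)$. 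The paper gets exactly this: the count-to-$sP^\ell$ ratio is $\left(O\left(\ell^2/((n-2I)^2IP^{\ell/I})\right)\right)^I=(o(1))^I=e^{-\omega(I)}$ with $I=\Omega(\ell)$, and $\ell^2/m=o(\ell)$ because $m=\omega(n)$ and $\ell\le n/2$. Your proposed per-term bound $\left(C\ell^2/(pn^2)\right)^{\ell-i}$ supplies no such decay: its base is not $o(1)$ (since $\ell^2=\Omega(m)$, as you notice yourself), it equals $1$ at $i=\ell$ --- a term you cannot exclude, since $f_\ell=s$, and whose treatment already requires proving $\mu_n\to\infty$, which you never do --- and your fallback (``per-step ratio $o(1)$ uniformly, then sum a geometric series'') has no anchor: you never establish that the first term $f_{\lceil\delta\ell\rceil}=o(s\mu_n)$, which is essentially the entire content of the lemma (and the per-step ratio is not even uniformly $o(1)$ at the top of the range; for $n=2\ell$ one has $f_{\ell-1}=0$ while $f_\ell=s$).

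Second, the hypothesis $\delta>4/5$ does not enter by keeping $n_0$ bounded below by a fraction of $n$ --- as you concede, $n-\tfrac{12}{5}\ell$ is negative for $\ell$ near $n/2$, so that route collapses precisely where the lemma is needed. Its actual role in the paper is different: in the ratio $[\ell]_I^2\,2^I/\bigl(I!\,[n]_{2I}P^\ell\bigr)$ each of the $I$ shared edges must pay a factor $P^{\ell/I}$, and one needs $\sqrt{\ell}\,P^{\ell/I-1}\to\infty$; since $\ell^2=\Omega(m)$ gives $\sqrt{\ell}=\Omega(m^{1/4})$ and $\delta>4/5$ gives $\ell/I-1<1/4$, this quantity is $\Omega\left(m^{1/4}P^{1/4}\right)=\Omega\left((nP)^{1/4}\right)\to\infty$ using $m=\omega(n)$, while $\ell^3=o(m^2)$ kills the remaining factor $\left(\ell^3/(n^4P^2)\right)^{1/2}$. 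Without identifying this mechanism --- trading the per-shared-edge power of $P$ against the polynomial count, uniformly over the window $[\delta\ell,\ell]$, and then beating the $e^{\Theta(\ell^2/m)}$ correction by $e^{-\omega(\ell)}$ --- your plan does not close, so this is a genuine gap rather than missing bookkeeping.
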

\begin{proof}
  Let $I = \lceil\delta\ell\rceil$ and $P = m/N$. We bound the
number of ways to choose a pair of matchings $(M, M')$ sharing at
least $I$ edges. There are $s$ choices for~$M$. The matching $M'$ has
to share at least $I$ edges with $M$ and the other edges of $M'$
cannot intersect these $I$ edges. Thus, we have at most $
{\binom{\ell}{I} \binom{n-2I}{2\ell-2I}} (2\ell-2I)!\Big/\paren[\big]{2^{(\ell-I)}
  (\ell-I)!}$ choices for $M'$. Thus, by~\eqref{mu},
  \begin{equation*}
    \begin{split}
      \frac{\sum_{i\geq I}f_i}{s\mu_n}
      &\leq
      \frac{\displaystyle{}\binom{\ell}{I} \binom{n-2I}{2\ell-2I}
        \frac{(2\ell-2I)!}{2^{\ell-I} (\ell-I)!}}
      {\mu_n}
      \\
      &=
      \frac{\displaystyle{\binom{\ell}{I} \binom{n-2I}{2\ell-2I}
        \frac{(2\ell-2I)!}{2^{\ell-I} (\ell-I)!}}}
      {\displaystyle{\binom{n}{2\ell}
          \frac{(2\ell)!}{2^\ell \ell!} P^{\ell}}}
      \exp\paren[\bigg]{
        \frac{(1-P)\ell^2}{2m} + O\paren[\Big]{\frac{\ell^3}{m^2}}
      }.
    \end{split}
  \end{equation*}
  Using Stirling's approximation for $(2I)!$ and $I!$ and the fact
  that $\binom{\ell}{I}\leq (\frac{e\ell}{I})^I$,
  $\binom{n}{2I}\geq (\frac{n}{2I})^{2I}$ and $n-2I\ge (1-\delta)n=\Omega(n)$, we have
  \begin{equation*}
    \begin{split}
      \frac{\displaystyle{\binom{\ell}{I} \binom{n-2I}{2\ell-2I}
          \frac{(2\ell-2I)!}{2^{\ell-I} (\ell-I)!}}}
      {\displaystyle{\binom{n}{2\ell} \frac{(2\ell)!}{2^\ell \ell!} P^{\ell}}}
      &=
      \frac{[\ell]_I^2 2^I}
        {I![n]_{2I} P^{\ell}}
      =\left(O\left(\frac{\ell^2}{(n-2I)^2I P^{\ell/I}}\right)\right)^I
      \\
      &=\left(O\left(\frac{1}{\sqrt{\ell} P^{\ell/I-1}}\right)\left(\frac{\ell^3}{n^4P^2}\right)^{1/2} \right)^I.
    \end{split}
  \end{equation*}
  We have that $\ell^3/(n^4P^2) = o(1)$ since $\ell^3=o(m^2)$ and
  \begin{equation*}
    \sqrt{\ell} P^{\ell/I-1}={\Omega(m^{1/4})P^{\delta^{-1}-1}} =\Omega( n^{1/4}P^{1/4}) \to \infty,
  \end{equation*}
  where the first equality holds because $\ell^2=\Omega(m)$ and the second equality because
  $\delta > 4/5$, and the last asymptotics holds
  because $nP\to \infty$ (since $m=\omega(n)$). Hence,
    \begin{equation*}
      \frac{\sum_{i\geq I}f_i}{s\mu_n}
      =
        \exp\left(\frac{(1-P)\ell^2}{2m} + O\paren[\Big]{\frac{\ell^3}{m^2}}-\omega(I)\right)
      =o(1)
    \end{equation*}
    since
    $\ell^3/m^2 = o(1)$ and $I = \Omega(\ell) = \Omega(\ell^2/m)$.
  \end{proof}

\subsection{Proof of Theorem~\ref{t:nearperfect}}

In this section, we apply Theorem~\ref{t:Gnm} and
  Theorem~\ref{t:Gnp} to prove Theorem~\ref{t:nearperfect}, which
  deals with near-perfect matchings. 
\begin{lemma}
  \lab{l:nearperfect-ratio} Let $0<\alpha<1$ be fixed and assume
  $n-2\ell=O(n^{\alpha})$. For any fixed $0<\delta<1$  and any $1\le i\le
  \delta\ell$,
  $$
  \frac{f_i}{f_{i-1}}=\frac{n^2}{8i\ell^2}(1+O(i/n+n^{\alpha-1})).
  $$
\end{lemma}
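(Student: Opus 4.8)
The plan is to obtain an exact formula for $f_i$ by inclusion--exclusion, pass to the ratio $f_i/f_{i-1}$, and then estimate. Concretely, I would count $f_i$ as follows: choose an $\ell$-matching $M$ of $K_{[n]}$ ($s$ choices), choose which $i$ of its edges are shared, i.e.\ an $i$-submatching $W\subseteq M$ ($\binom{\ell}{i}$ choices), and then count the matchings $M'$ with $M\cap M'=W$. Such an $M'$ is $W$ together with an $(\ell-i)$-matching on the $n-2i$ vertices outside $V(W)$ that uses none of the $\ell-i$ edges of $M\setminus W$ (all of which lie on those vertices). Counting the latter by inclusion--exclusion over how many of those $\ell-i$ forbidden edges are forced to be used gives
\[
f_i = s\binom{\ell}{i}\,\Sigma_i,\qquad
\Sigma_i = \sum_{t\ge 0}(-1)^t\binom{\ell-i}{t}\binom{n-2i-2t}{2\ell-2i-2t}(2\ell-2i-2t-1)!!,
\]
so that $f_i/f_{i-1}=\tfrac{\ell-i+1}{i}\cdot\Sigma_i/\Sigma_{i-1}$, and everything reduces to understanding $\Sigma_i/\Sigma_{i-1}$.

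Next I would pull out the $t=0$ term, writing $\Sigma_i=B_i\sigma_i$ with $B_i=\binom{n-2i}{2\ell-2i}(2\ell-2i-1)!!$ and $\sigma_i=\sum_{t\ge0}(-1)^tu_t^{(i)}$, where $u_t^{(i)}$ is the $t$-th term of $\Sigma_i$ divided by the $0$-th. A direct computation gives the recursion $u_{t+1}^{(i)}/u_t^{(i)}=\dfrac{2(\ell-i-t)^2}{(t+1)(n-2i-2t)(n-2i-2t-1)}$; since $n\ge 2\ell$ forces $n-2i-2t\ge 2(\ell-i-t)$, this ratio is at most $\tfrac1{t+1}$ for $\ell-i-t\ge1$ (hence at most $\tfrac12$ once $t\ge1$), and at most $\tfrac23$ for $t=0$ because $i\le\delta\ell$ makes $\ell-i\ge(1-\delta)\ell\to\infty$. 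Thus $\sigma_i$ has a geometrically small tail, $\sigma_i=\Theta(1)$ (in fact $\sigma_i\to e^{-1/2}$), and $\sum_t t\,u_t^{(i)}=O(1)$.

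The crux is to show $\sigma_i/\sigma_{i-1}=1+O(1/n)$. Note that here $\Sigma_i$ is \emph{not} dominated by its leading term: when $n-2\ell=O(n^\alpha)$ and $i\le\delta\ell$ one has $n-2i\sim 2(\ell-i)$, so the term ratio above is $\approx\tfrac12$, not $o(1)$; hence one cannot replace $\Sigma_i$ by $B_i$, and instead one must show that the whole sum is \emph{stable} under $i\mapsto i-1$. I would do this term by term: each factor of $u_t^{(i)}/u_t^{(i-1)}=\prod_{r<t}\frac{(\ell-i-r)^2(n-2i+2-2r)(n-2i+1-2r)}{(\ell-i+1-r)^2(n-2i-2r)(n-2i-1-2r)}$ equals $1+O\!\big(\tfrac1{\ell-i-r}+\tfrac1{n-2i-2r}\big)$, which is $1+O(t/n)$ for $r<t\le(\ell-i)/2$ since $i\le\delta\ell$ makes $\ell-i$ and $n-2i$ both $\Omega(n)$; so $|u_t^{(i)}-u_t^{(i-1)}|=O(t/n)\,u_t^{(i-1)}$, and summing against the geometrically decaying $u_t^{(i-1)}$ using $\sum_t t\,u_t^{(i-1)}=O(1)$ yields $|\sigma_i-\sigma_{i-1}|=O(1/n)$, hence $\sigma_i/\sigma_{i-1}=1+O(1/n)$ because $\sigma_{i-1}=\Theta(1)$. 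This is the main obstacle. A switching argument between $F(i)$ and $F(i-1)$ looks natural, but the obvious switching has an inverse count that depends on the number of alternating $4$-cycles of $M\triangle M'$ (which can be a constant fraction of the non-shared edges), so a naive switching does not give a clean ratio and one is pushed back to controlling the inclusion--exclusion sum directly.

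Finally I would assemble the estimate. Combined with the elementary identity $B_i/B_{i-1}=\frac{2\ell-2i+2}{(n-2i+2)(n-2i+1)}$, the above gives $f_i/f_{i-1}=\dfrac{2(\ell-i+1)^2}{i(n-2i+2)(n-2i+1)}\,(1+O(1/n))$. Then, invoking $n-2\ell=O(n^\alpha)$ and $i\le\delta\ell$ (so $\ell-i=\Omega(n)$), I would substitute $n-2i=2(\ell-i)+O(n^\alpha)$ and $n=2\ell+O(n^\alpha)$ and check that both $\dfrac{2(\ell-i+1)^2}{i(n-2i+2)(n-2i+1)}$ and $\dfrac{n^2}{8i\ell^2}$ equal $\dfrac1{2i}\big(1+O(n^{\alpha-1})\big)$, so that $f_i/f_{i-1}=\dfrac{n^2}{8i\ell^2}\big(1+O(n^{\alpha-1})\big)$, which a fortiori gives the claimed bound $\dfrac{n^2}{8i\ell^2}\big(1+O(i/n+n^{\alpha-1})\big)$.
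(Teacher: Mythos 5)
Your proof is correct, but it takes a genuinely different route from the paper's. The paper proves the lemma with a switching argument: a forward switching picks a shared edge $x\in M\cap M'$ and non-shared edges $y\in M\setminus M'$, $z\in M'\setminus M$ and rewires them, giving $8i(\ell-i)(\ell-i+O(1))$ switchings from every element of $F(i)$, while the inverse switching is counted by choosing two $2$-paths centred at vertices covered by both matchings through distinct edges, giving $(n_2-2(i-1))(n_2-2(i-1)-O(1))=n^2(1+O(i/n+n^{\alpha-1}))$ choices because in the near-perfect regime $n_2=n-O(n^{\alpha})$ for every pair; the ratio follows in a few lines. You instead compute $f_i=s\binom{\ell}{i}\Sigma_i$ exactly by inclusion--exclusion, show the normalized alternating sum $\sigma_i$ is $\Theta(1)$ (your term-ratio bound $\le \frac{1}{t+1}$, valid since $n\ge 2\ell$, gives faster-than-geometric decay and $u_1/u_0\le 2/3$, so $\sigma_i\ge u_0-u_1\ge 1/3$) and is stable under $i\mapsto i-1$ up to $O(1/n)$ by a term-by-term comparison, and then assemble the ratio from $\binom{\ell}{i}/\binom{\ell}{i-1}$ and $B_i/B_{i-1}$; I checked these steps and they hold uniformly for $i\le\delta\ell$, using only $\ell-i=\Omega(n)$ and $n-2\ell=O(n^{\alpha})$. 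What each approach buys: your exact enumeration exploits the closed-form matching counts and in fact yields the slightly sharper statement $f_i/f_{i-1}=\frac{n^2}{8i\ell^2}(1+O(n^{\alpha-1}))$, showing the $O(i/n)$ term in the lemma is only an artifact of the switching bookkeeping (replacing $(\ell-i)^2$ by $\ell^2$ and $n_2-2i$ by $n$); the paper's switching is shorter and, more importantly, is reused essentially verbatim in the supercritical case (Lemma~\ref{l:super-ratio}), where no closed formula for $f(i,n_2)$ is available and the same switching is combined with the analysis of the typical value of $n_2$. One side remark of yours is off the mark: the paper's switching does not have an inverse count depending on alternating $4$-cycles of $M\triangle M'$ --- its inverse is counted by $2$-paths through doubly covered vertices, so it depends only on $n_2$ (and $i$), which is exactly why it gives a clean ratio here.
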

\begin{proof}
  We define the following switching (see
  Figure~\ref{fig:switching-nearperfect}). For any $g=(M,M')\in F(i)$,
  pick an edge $x\in M\cap M' $ and label the end vertices of $x$ by
  $1$ and $2$. Then pick edges $y\in M\setminus M' $ and $z\in M'
  \setminus M$ such that $y$ and $z$ are disjoint and label the end vertices
  of $y$ and $z$ by $3$, $4$ and $5$, $6$ respectively. Replace $x$
  and $y$ by $\{1,3\}$ and $\{2,4\}$ in~$M$ and replace $x$ and $z$ by
  $\{1,5\}$ and $\{2,6\}$ in~$M'$. This operation results in $g'\in
  F(i-1)$. The number of ways to perform such a switching is $2i\cdot
  2(\ell-i)\cdot 2(\ell-i+O(1))$, since the numbers of ways to choose
  $x$ and $y$ are $i$ and $\ell-i$ respectively, and the number of
  ways to choose $z$ is $\ell-i+O(1)$ where $O(1)$ accounts for the
  choices of $z$ such that $z$ and $y$ are not
  disjoint, and for each
  edge there are two ways to label its end vertices. The inverse
  switching can be described as follows. For any $g'=(Q,Q')\in
  F(i-1)$, pick a $2$-path in $Q\cup Q'$ and label the vertices as
  $3,1,5$ such that $\{3,1\}\in Q$ and $\{1,5\}\in Q'$. Pick another
  $2$-path in $Q\cup Q'$ and label the vertices as $4,2,6$ such that
  $\{4,2\}\in Q$, $\{2,6\}\in Q'$ and $\{3,4\}\notin Q$,
  $\{5,6\}\notin Q'$. Replace $\{3,1\}$ and $\{4,2\}$ by $\{3,4\}$ and
  $\{1,2\}$ in $Q$ and replace $\{1,5\}$ and $\{2,6\}$ by $\{1,2\}$
  and $\{5,6\}$ in $Q'$. This operation is applicable if and only if
  all six vertices $i$, $1\le i\le
  6$, are distinct. Recall that
  $n_2=n_2(g')$ denotes the number of vertices incident with both $Q$
  and $Q'$. By~\eqn{n's} and the assumption that
  $n-2\ell=O(n^{\alpha})$, it follows immediately that
  $n_2=n-O(n^{\alpha})$. There are $n_2-2(i-1)$ ways to choose vertex
  $1$ and then the vertices $3$ and $5$ are determined by the choice
  of vertex~$1$. The number of ways to choose $4,2,6$ is
  $n_2-2(i-1)-O(1)$, where $2(i-1)$ counts the number of vertices
  incident to edges in $Q\cap Q'$ and there are $O(1)$ ways to choose
  vertex $2$ so that either the six vertices are not all distinct, or
  $\{3,4\}\in Q'$, or $\{5,6\}\in Q$. Hence, the number of applicable
  inverse switchings for any $g'\in F(i-1)$ is
  $(n_2-O(i))^2=n^2(1+O(i/n+n^{\alpha-1}))$. Hence, for any $1\le i\le
  \delta\ell$,
  $$
  \frac{|F(i)|}{|F(i-1)|}
  =
  \frac{n^2}{8i(\ell-i)^2}(1+O(i/n+n^{\alpha-1}))
  =
  \frac{n^2}{8i\ell^2}(1+O(i/n+n^{\alpha-1})).\qedhere
  $$
\end{proof}

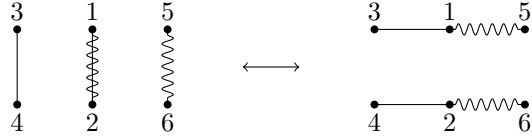
\begin{figure}[!h]
\centering
  \begin{tikzpicture}
    \coordinate [label=above:$1$] (1) at (0,0);
    \coordinate [label=below:$2$] (2) at (0,-1);
    \coordinate [label=above:$3$] (3) at (-1,0);
    \coordinate [label=below:$4$] (4) at (-1,-1);
    \coordinate [label=above:$5$] (5) at (1,0);
    \coordinate [label=below:$6$] (6) at (1,-1);
    \foreach \point in {1,2,3,4,5,6} \fill [black] (\point) circle
    (1.5pt);
    \draw (3) -- (4);
    \draw (1) -- (2);
    \draw [decorate,decoration={snake,segment length=1.5mm,
      amplitude=.75mm, pre length=2pt, post length=2pt}] (5) -- (6);
    \draw [decorate,decoration={snake,segment length=1.5mm,
      amplitude=.75mm, pre length=2pt, post length=2pt}] (1) -- (2);

    \coordinate (startarrow) at (2,-.5);
    \coordinate (endarrow) at (2.75,-.5);
    \draw [<->] (startarrow)--(endarrow);
    \coordinate [label=above:$1$] (1a) at (4.75,0);
    \coordinate [label=below:$2$] (2a) at (4.75,-1);
    \coordinate [label=above:$3$] (3a) at (3.75,0);
    \coordinate [label=below:$4$] (4a) at (3.75,-1);
    \coordinate [label=above:$5$] (5a) at (5.75,0);
    \coordinate [label=below:$6$] (6a) at (5.75,-1);
    \foreach \point in {1a,2a,3a,4a,5a,6a} \fill [black] (\point)
    circle (1.5pt);

    \draw (1a) -- (3a);
    \draw (2a) -- (4a);
    \draw [decorate,decoration={snake,segment length=1.5mm,
      amplitude=.75mm, pre length=2pt, post length=2pt}] (1a) -- (5a);
    \draw [decorate,decoration={snake,segment length=1.5mm,
      amplitude=.75mm, pre length=2pt, post length=2pt}] (2a) -- (6a);
  \end{tikzpicture}
  \caption{Switching to decrease the number of shared edges.}
  \label{fig:switching-nearperfect}
\end{figure}

\begin{proof}[Proof of Theorem~\ref{t:nearperfect}] For any
  $m=pN+O(\sqrt{pN})$, consider $\G(n,m)$. Apply Theorem~\ref{t:Gnm}
  with $h=\ell$ and $f'_j=f_j$ for all $0\le j\le \ell$. By our
  assumption, $pn^{1-\alpha}\to\infty$ as $n\to\infty$ and
  $\ell=n/2-O(n^{\alpha})$, where $\alpha>1/2$, which imply $p^2n\to\infty$. Hence, we have $\ell^3=o(m^2)$
  and $\ell^2=\Omega(m)$. Let $\gamma(n)=9\ell/10$. Conditions (a) and
  (b) are satisfied by Lemma~\ref{l:nearperfect-ratio} (by taking
  $\delta=9/10$) and the assumption that $pn^{1-\alpha}\to\infty$ as
  $n\to\infty$, and condition (c) is satisfied by Lemma~\ref{l:tail}.
  Hence, for all $m=pN+O(\sqrt{pN})$, we have
  $X_{n,\ell}/\ex_{\G(n,m)}(X_{n,\ell})\xrightarrow{p} 1$ as
  $n\to\infty$. We also have $1-p=\Omega(1)$ and $\ell=\Omega(n)$ by assumption, and so
  $\beta_{n,\ell}=\ell\sqrt{(1-p)/pN}=\Omega(1)$. Then
  Theorem~\ref{t:nearperfect} follows by Theorem~\ref{t:Gnp}.
\end{proof}

\subsection{Proof of Theorem~\ref{t:super}}

In this section, we apply Theorem~\ref{t:Gnm} and
  Theorem~\ref{t:Gnp} to prove Theorem~\ref{t:super}. We assume the
  hypotheses in Theorem~\ref{t:super}: $np
  \to\infty$, $1-p = \Omega(1)$, $\ell = \Omega(n\sqrt{p})$, $\ell
  \leq n/2 - n^{\alpha}$ with $\alpha \in(7/8, 1)$ being fixed, and
  $\ell^3 = o(n^4p^2)$.

For $0\leq i\leq \ell$ and $0\leq n_2\leq 2\ell$, let $F(i,n_2)$ be
the set of pairs $(M, M')\in F(i)$ such that $|V(M)\cap V(M')| = n_2$
and let $f(i, n_2) = |F(i, n_2)|$. Let $\delta=9/10$. For
$0\leq i\leq \delta \ell$, define
\begin{equation}\lab{z}
  \begin{split}
    z(i) &= \frac{4(\ell-i)^2}{n-2i},\\
    f_i' &=
    \sqrt{\pi}
    \left(\frac{1}{2z(i)} +
      \frac{1}{2\ell-z(i)-2i}+\frac{1}{2(n-4\ell+z(i)+2i)}\right)^{-1/2}
    f(i, z(i)+2i).
  \end{split}
\end{equation}
For $\delta \ell < i\leq \ell$, let $f'_i = f_i$.

First we prove that $f_i$ and $f'_i$ are asymptotically equal. In many
places, we ignore the floor sign if a certain variable is required to
be integral (e.g.\ the number of edges) but the error caused by
ignoring it in the analysis is negligible.

\begin{lemma}
  \lab{l:super-formula}For $i \leq \delta \ell$, we have that $f_i =
  f'_i (1 + o(1))$, uniformly for $i$. Moreover, $f(i, z(i)+2i+k)/f(i,
  z(i)+2i)= 1+ O\paren[\big]{1/z(i)+n/
    (\ell(n-2\ell))+n/(n-2\ell)^2}$ for $k = O(1)$,
  uniformly for $i$.
\end{lemma}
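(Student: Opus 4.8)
The plan is to reduce everything to a one–dimensional Laplace (local central limit) estimate for the sum $f_i=\sum_{n_2}f(i,n_2)$, after first writing $f(i,n_2)$ in closed form. Write $n_2=2i+k$, so $k$ is the number of vertices lying both in an edge of $M\setminus M'$ and in an edge of $M'\setminus M$; recall from~\eqref{n's} that $n_1=4(\ell-i)-2k$ and $n_0=n-4\ell+2i+k$. Counting directly: choose the shared $i$-matching ($\binom{n}{2i}(2i-1)!!$ ways); on the remaining $n-2i$ vertices choose the three vertex classes of sizes $k$, $2(\ell-i)-k$, $2(\ell-i)-k$ belonging to both difference–matchings, only to $M\setminus M'$, only to $M'\setminus M$ (a multinomial coefficient); put a perfect matching on each of $V(M\setminus M')$ and $V(M'\setminus M)$ ($((2(\ell-i)-1)!!)^2$ ways); and multiply by the probability $\rho_{i,k}$ that two independent uniform perfect matchings on two $2(\ell-i)$-sets overlapping in the chosen $k$ vertices share no edge. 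This gives $\ln f(i,2i+k)=c_i+\phi_i(k)+\ln\rho_{i,k}$ with $c_i$ independent of $k$ and $\phi_i(k)=-\ln k!-2\ln(2(\ell-i)-k)!-\ln(n-4\ell+2i+k)!$. The exact one–step ratio
\[
 \frac{f(i,2i+k+1)}{f(i,2i+k)}=\frac{(2(\ell-i)-k)^2}{(k+1)(n-4\ell+2i+k+1)}\cdot\frac{\rho_{i,k+1}}{\rho_{i,k}}
\]
is the main workhorse, and already yields the second assertion once the maximiser is located.

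Next I would locate and expand around the maximiser of $\phi_i$. Ignoring the $\rho$–ratio (handled below), the ratio equals $1$ exactly at $k^\ast=z(i)=4(\ell-i)^2/(n-2i)$ (the solution of $(2(\ell-i)-k)^2=k(n-4\ell+2i+k)$); at this point $n_0=(n-2\ell)^2/(n-2i)$ and $2\ell-z(i)-2i=n_1/2=2(\ell-i)(n-2\ell)/(n-2i)$, and Taylor–expanding the logarithm of the ratio gives
\[
 \phi_i''(k^\ast)=-\Big(\tfrac1{z(i)}+\tfrac2{2\ell-z(i)-2i}+\tfrac1{\,n-4\ell+2i+z(i)}\Big)=-2\kappa_i,
\]
where $\kappa_i$ is the bracketed quantity in the definition of $f'_i$ in~\eqref{z}. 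I would then check, \emph{uniformly for $0\le i\le\delta\ell$}, that $z(i)=\Omega(\ell^2/n)=\Omega(np)$, $n_1=\Omega(\ell n^{\alpha-1})$ and $n_0\ge(n-2\ell)^2/n$ all tend to infinity, using $\ell=\Omega(n\sqrt p)$, $\ell\le n/2-n^\alpha$, $np\to\infty$ (and $\ell-i\ge(1-\delta)\ell$, $n-2i=\Theta(n)$ in this range). Hence $\sigma_i^2:=1/|\phi_i''(k^\ast)|=1/(2\kappa_i)$ satisfies $\sigma_i^2\le\min(z(i),\tfrac14 n_1,n_0)$ and $\sigma_i\to\infty$, so $\sigma_i$ is of strictly smaller order than each of $z(i),n_1,n_0$ — equivalently, than the distances from $k^\ast$ to the ends $0$, $2(\ell-i)$ and the point where $n_0$ vanishes.

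For the factor $\rho_{i,k}$, a standard Poisson approximation for the number of common edges (which must lie among the $\binom k2$ pairs inside the $k$-vertex overlap) gives, uniformly, $\rho_{i,k}=e^{-\mu_{i,k}}(1+o(1))$ with $\mu_{i,k}=\tfrac{k^2}{8(\ell-i)^2}(1+o(1))=O(1)$, and $|\partial_k\mu_{i,k}|=O(1/(n-2i))$, $|\partial_k^2\mu_{i,k}|=O(1/(\ell-i)^2)$ near $k^\ast$. Since $\ell-i=o((n-2i)^{3/2})$ here, these are negligible against $|\phi_i''(k^\ast)|$ on the scale $\sigma_i$; hence $\phi_i+\ln\rho_{i,\cdot}$ is maximised within $o(\sigma_i)$ of $k^\ast$ with curvature $(1+o(1))\phi_i''(k^\ast)$, and $\rho_{i,k}=(1+o(1))\rho_{i,k^\ast}$ across the $O(\sigma_i\log n)$–window carrying the sum. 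As $\phi_i''<0$ throughout the feasible range, $(e^{\phi_i(k)})_k$ is log–concave, so with $\sigma_i\to\infty$ Laplace's method gives $\sum_k e^{\phi_i(k)+\ln\rho_{i,k}}=(1+o(1))\rho_{i,k^\ast}e^{\phi_i(k^\ast)}\sqrt{2\pi/|\phi_i''(k^\ast)|}$; multiplying by $e^{c_i}$ and using $2\pi/|\phi_i''(k^\ast)|=\pi/\kappa_i$ yields $f_i=(1+o(1))f(i,2i+z(i))\sqrt{\pi}\,\kappa_i^{-1/2}=(1+o(1))f'_i$, the first assertion. For the second, the one–step ratio is $1+o(1)$ near $k^\ast$, so shifting $n_2$ by $k=O(1)$ multiplies $f(i,\cdot)$ by $1+O(|\phi_i''(k^\ast)|)=1+O(\kappa_i)$, and $\kappa_i=\Theta(1/z(i))+\Theta\big(n/(\ell(n-2\ell))\big)+\Theta\big(n/(n-2\ell)^2\big)$ by the above formulas for $n_0,n_1$ (using $\ell-i\asymp\ell$, $n-2i\asymp n$), which is exactly the stated error term.

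I expect the main obstacle to be the treatment of $\rho_{i,k}$: it is \emph{not} $1+o(1)$ — near $i=0$ with $\ell$ close to $n/2$ it can be a constant bounded away from $1$ — so it cannot simply be discarded; the point is that it is slowly varying on the Gaussian scale, hence contributes the \emph{same} factor to $f_i$ and to $f'_i$ and cancels in the ratio. The secondary difficulty is uniformity in $i$: one must verify throughout $0\le i\le\delta\ell$ that none of $z(i),n_1,n_0$ degrades, that the third–order Taylor remainder of $\phi_i$ over the effective window stays $o(1)$, and that the Poisson estimates for $\rho_{i,k}$ hold uniformly; this is where the hypotheses on $\ell$ and $p$, in particular $\ell\le n/2-n^\alpha$ (forcing $n-2\ell$ to grow faster than $\sqrt n$) and $\ell=\Omega(n\sqrt p)$ with $np\to\infty$, are used.
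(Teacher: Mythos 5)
Your proposal is correct in substance, and while its skeleton matches the paper's (locate the mode $n_2=z(i)+2i$ of $f(i,\cdot)$, show a Gaussian profile with curvature $\frac{1}{2z(i)}+\frac{1}{2\ell-z(i)-2i}+\frac{1}{2(n-4\ell+z(i)+2i)}$, and sum by Laplace to get the $\sqrt{\pi}\,(\cdot)^{-1/2}$ factor in $f_i'$), you obtain the key one-step ratio in the $n_2$-direction by a genuinely different route. The paper never writes $f(i,n_2)$ in closed form: it compares $F(i,n_2)$ with $F(i,n_2-1)$ by a switching (move one endpoint of an $M$-edge from a doubly covered vertex to an uncovered vertex), which automatically preserves the condition $|M\cap M'|=i$, so the awkward ``no further common edge'' constraint never has to be quantified and the ratio comes out as $\frac{(2\ell-n_2)^2}{(n_2-2i)(n-4\ell+n_2)}\bigl(1+O(1/(2\ell-n_2))\bigr)$. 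You instead count $f(i,2i+k)$ exactly as (shared $i$-matching) $\times$ (multinomial) $\times$ $((2(\ell-i)-1)!!)^2\,\rho_{i,k}$, which gives an exact factorial ratio (no $O(1/(2\ell-n_2))$ fudge) but forces you to control the no-common-edge probability $\rho_{i,k}$. You correctly identify this as the real extra cost: $\rho_{i,k}$ is only $\Theta(1)$, not $1+o(1)$, so you need a uniform Poisson-approximation lemma giving $\rho_{i,k}=e^{-\mu_{i,k}}(1+o(1))$ with $\mu_{i,k}\approx k^2/8(\ell-i)^2=O(1)$ (provable uniformly by Bonferroni or Chen--Stein since $z(i)\to\infty$), together with the observation that $\ln\rho_{i,k}$ varies by $O(\sigma_i/(n-2i))=o(1)$ across the window so it cancels between $f_i$ and $f(i,z(i)+2i)$; your identification of the maximiser, of $-\phi_i''(z(i))=2\kappa_i$, of $n_1/2=2(\ell-i)(n-2\ell)/(n-2i)$ and $n_0=(n-2\ell)^2/(n-2i)$ at the mode, and hence of the error term $1/z(i)+n/(\ell(n-2\ell))+n/(n-2\ell)^2$, all agree with the paper's computations. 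One technical point to tighten: a window of width $O(\sigma_i\log n)$ can overshoot the feasible range when $z(i)$ grows very slowly (the hypotheses only give $z(i)=\Omega(np)\to\infty$, possibly slower than any power of $\log n$); take instead $\sigma_i\,\omega_n$ with $\omega_n\to\infty$ slowly, or control the tails purely through the monotone decay of the one-step ratio as the paper does with its window $z(i)^{\alpha'/3}$ -- this is a routine fix, not a gap, and your log-concavity remark already supplies the needed tail bound.
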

\begin{proof}
  We define the following switching (see
  Figure~\ref{fig:swiching-super}). Given a pair of matchings
  $(M,M')\in F(i,n_2)$, choose a vertex $v$ saturated by both $M, M'$
  with distinct edges, say $av\in M, bv \in M'$, choose a vertex $u$
  not saturated by neither matching, delete $av$ from $M$ and add $au$
  to $M$. The new pair of matchings is in $F(i, n_2-1)$. Note that
  there are $(n_2-2i) n_0$ ways of performing this switching.

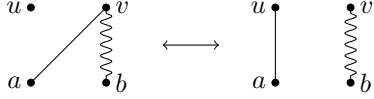
\begin{figure}[!h]
\centering
  \begin{tikzpicture}
    \coordinate [label=left:$u$] (u) at (0,0);
    \coordinate [label=left:$a$] (a) at (0,-1);
    \coordinate [label=right:$v$] (v) at (1,0);
    \coordinate [label=right:$b$] (b) at (1,-1);
    \foreach \point in {u,a,v,b} \fill [black] (\point) circle (1.5pt);
    \draw (u) -- (a);
    \draw [decorate,decoration={snake,segment length=1.5mm,
      amplitude=.75mm, pre length=2pt, post length=2pt}]  (v) -- (b);

    \coordinate (startarrow) at (-3.5+2,-.5);
    \coordinate (endarrow) at (-3.5+2.75,-.5);
    \draw [<->] (startarrow)--(endarrow);

    \coordinate [label=left:$u$] (u2) at (-7+3.75,0);
    \coordinate [label=left:$a$] (a2) at (-7+3.75,-1);
    \coordinate [label=right:$v$] (v2) at (-7+4.75,0);
    \coordinate [label=right:$b$] (b2) at (-7+4.75,-1);
    \foreach \point in {u2,a2,v2,b2} \fill [black] (\point) circle (1.5pt);
    \draw (a2) -- (v2);
    \draw [decorate,decoration={snake,segment length=1.5mm,
      amplitude=.75mm, pre length=2pt, post length=2pt}]  (v2) -- (b2);
  \end{tikzpicture}
  \caption{Switching to decrease $n_2$.}
  \label{fig:swiching-super}
\end{figure}

  The inverse switching is described as follows. Given a pair of
  matchings $(M, M')\in F(i, n_2-1)$, choose vertices $u$ covered by
  $M$ but not by $M'$ and $v$ covered by $M'$ but not by $M$ such that
  the edge $au\in M$ and the edge $bv\in M'$ satisfy $a\neq b$. Delete
  $au$ from $M$ and add $av$ to $M$. The number of vertices that are
  saturated only by $M$ is $n_1/2$ and so is the number of vertices
  that are saturated only by $M'$. Hence, there are $n_1 (n_1 -
  O(1))/4$ ways of doing the switching, where the $O(1)$ accounts for
  the choices of $v$ such that $a=b$, given the choice of $u$.

   By~\eqn{n's},
  \begin{equation*}
    \frac{f(i, n_2)}{f(i, n_2-1)}=\frac{n_1 (n_1 - O(1))/4}{(n_2-2i) n_0}
    =\frac{(2\ell-n_2)^2}{(n_2-2i)(n-4\ell+n_2)}
    \left(1 +
      O\left(\frac{1}{2\ell-n_2}\right)\right).
  \end{equation*}
For $n_2 = z(i)+2i$, we have that this ratio (ignoring the
  error term) is $1$. Thus, for $k < z(i)^{\alpha'/3}$ with $\alpha'
  \in (1.5,2)$ satisfying $2\alpha > 1+\alpha'/2$,
  \begin{align*}
    &\frac{f(i, z(i)+2i+k)}{f(i,z(i)+2i)} = \prod_{j=1}^{k}
    \frac{f(i, z(i)+2i+j)}{f(i, z(i)+2i+j-1)}
    \\& = \prod_{j=1}^{k}
    \frac{(2\ell-z(i)-j-2i)^2}{(z(i)+j)(n-4\ell+z(i)+j+2i)} \left(1 +
      O\left(\frac{1}{2\ell-z(i)-j-2i}\right)\right)
    \\
    &= \prod_{j=1}^{k} \frac{(2\ell-z(i)-2i)^2}{z(i)(n-4\ell+z(i)+2i)}
    \frac{\left(1 -\frac{j}{2\ell-z(i)-2i}\right)^2\left(1 +
      O\left(\frac{1}{2\ell-z(i)-j-2i}\right)\right)}
    {\left(1+\frac{j}{z(i)}\right)\left(1 +
        \frac{j}{n-4\ell+z(i)+2i}\right)} \\
     &= \prod_{j=1}^{k} \exp\Bigg(
    -\frac{2j}{2\ell-z(i)-2i} - \frac{j}{z(i)} -
    \frac{j}{n-4\ell+z(i)+2i}+
    \\
    &+ O\left( \frac{j^2}{(2\ell-z(i)-2i)^2} +
      \frac{j^2}{z(i)^2}+
      \frac{j^2}{(n-4\ell+z(i)+2i)^2}+ \frac{1}{2\ell-z(i)-j-2i}\right)\Bigg)
    \\
    &= \exp\Bigg( -\frac{k^2}{2\ell-z(i)-2i} - \frac{k^2}{2z(i)} -
    \frac{k^2}{2(n-4\ell+z(i)+2i)}+
    \\
       &+
       O\left(\frac{k}{z(i)}+
      \frac{k}{n-4\ell+z(i)+2i}\right)
    \\& + O\left(\frac{k^3}{(2\ell-z(i)-2i)^2} +
      \frac{k^3}{z(i)^2}+
      \frac{k^3}{(n-4\ell+z(i)+2i)^2}+ \frac{k}{2\ell-z(i)-k-2i}\right) \Bigg).
  \end{align*}

We have that the derivative of $z(i)$ with respect to $i$ is
$-8(\ell-i)(n-\ell-i)/(n-2i)^2$. Since $n-\ell-i \geq n-2\ell \geq 0$,
this implies that $z(i)\ge z(\floor{\delta \ell})$ for all
$i\le\delta\ell$. Moreover, $z(\floor{\delta \ell}) = \Omega(\ell^2/n)
= \omega(1)$ because $\ell=\omega(\sqrt{n})$ (this holds
since $\ell^2 = \Omega(n^2p)$ and $np = \omega(1)$). Thus, using $k <
z(i)^{\alpha'/3}$ and $\alpha' < 2$, we have that
\begin{equation*}
  \frac{k^3}{z(i)^2}
  <
  \frac{1}{z(i)^{2-\alpha'}}
  \leq
  \frac{1}{z(\floor{\delta \ell})^{2-\alpha'}}
  =
  o(1),
\end{equation*}
and so $k/z(i) = o(1)$ as well.

Using~$k<z(i)^{\alpha'/3}$,
\begin{equation*}
  \begin{split}
    \frac{k^3}{(2\ell-z(i)-2i)^2}&
    <\frac{z(i)^{\alpha'}}{(2\ell-z(i)-2i)^2}
    =O\left(\frac{(\ell-i)^{2\alpha'-2}(n-2i)^{2-\alpha'}}{(n-2\ell)^2}\right)
    \\
    &=
    O\left(n^{\alpha'-2\alpha}\right) = o(1),
  \end{split}
\end{equation*}
where the first equation is by~\eqn{z} and the fact that
$2\alpha'-2,2-\alpha'>0$, the second equation is by noting that
$n-2\ell= \Omega(n^{\alpha})$ and  $\ell-i,n-2i\le n$, and
the last equation holds because $2\alpha>1+\alpha'/2$ and $\alpha'<2$,
implying that $\alpha'<2\alpha$. We also have that
\begin{equation*}
  \begin{split}
    \frac{k^3}{(n-4\ell+z(i)+2i)^2}
    &<
    \frac{z(i)^{\alpha'}}{(n-4\ell+z(i)+2i)^2}
    =
    \frac{4^{\alpha'}(\ell-i)^{2\alpha'}
      (n-2i)^2}{
      (n-2i)^{\alpha'}(n-2\ell)^4}
    \\
    &=
    O\left(\frac{\ell^{2\alpha'}(n-2i)^{2-\alpha'}}{(n-2\ell)^{4}}\right)
    = O\left(n^{2+\alpha'-4\alpha}\right)=o(1),
  \end{split}
\end{equation*}
where the first equation is by~\eqn{z}, the second equation follows
from $\alpha',2-\alpha' > 0$, the third equation is by $n-2i, \ell\leq
n$ and $n-2\ell = \Omega(n^\alpha)$, and the last equation holds
because $2\alpha > 1 + \alpha'/2$. Moreover, $
  k^3/(n-4\ell+z(i)+2i)^2=o(1)$ implies that $k/(n-4\ell+z(i)+2i)=o(1)$ as well. We have that
\begin{equation*}
  \begin{split}
    \frac{z(i)^{\alpha'/3}}{2\ell-z(i)-2i}
    &=
    \frac{4^{\alpha'/3}(\ell-i)^{2\alpha'/3}}{(n-2i)^{\alpha'/3}}\frac{n-2i}{2(n-2\ell)(\ell-i)}
    =
    O\left(
      \frac{n^{2\alpha'/3-1}n^{1-\alpha'/3}}{n^{\alpha}}
    \right)
    \\&=
    O\left(
      {n^{-\alpha+\alpha'/3}}
    \right)
    =
    o(1),
  \end{split}
\end{equation*}
where the first equation is by~\eqn{z}, the second equation follows
from $\ell-i, n-2i\leq n$, $\alpha'\in(3/2,2)$ and $n-2\ell =
\Omega(n^\alpha)$, and the last equation follows from the fact that $2
\alpha>1+\alpha'/2$, which implies $\alpha > \alpha'/3$.

Thus,  for any $k < z(i)^{\alpha'/3}$,
\begin{equation*}
  \begin{split}
    \frac{f(i,z(i)+2i+k)}{f(i,z(i)+2i)} &\sim \exp\left( -\frac{k^2}{2\ell-z(i)-2i} - \frac{k^2}{2z(i)} -
      \frac{k^2}{2(n-4\ell+z(i)+2i)}\right).
  \end{split}
\end{equation*}

Then, for $k = O(1)$, 
 \begin{equation*}
  \frac{f(i, z(i)+2i+k)}{f(i, z(i)+2i)}
  =
  \exp\left(
    O\left(-\frac{1}{2\ell-z(i)-2i}
      - \frac{1}{2z(i)}
      - \frac{1}{2(n-4\ell+z(i)+2i)}\right)\right).
\end{equation*}
We have that $2\ell -z(i)-2i = \Omega(\ell(n-2\ell)/ n )$ and
$n-4\ell+z(i)+2i = \Omega((n-2\ell)^2/n)$.  This implies that
$f(i,z(i)+2i+k)/f(i, z(i)+2i) = 1+ O(1 / z(i)+n/
(\ell(n-2\ell)+n/(n-2\ell)^2)$ for $k=O(1)$.

Note that the ratio between consecutive terms is decreasing as $n_2$
increases (moreover we can ignore the error in the ratio because we
only need an upper bound now). If $k' = z^{\alpha''/3}$ with $1.5 <
\alpha'' < \alpha'$, then
\begin{equation*}
  \begin{split}
    \frac{\sum_{j\geq k'} f(i, z(i)+2i+j)}{f(i, z(i)+2i)}
    &\leq (1+o(1)) \sum_{j\geq k'}\exp\left(-\frac{j^2}{2z(i)}\right)
    \\
    &\leq  (1+o(1)) \sum_{j\geq k'}\exp\left(-\frac{j k'}{2z(i)}\right)
    \\
    &\leq
    (1+o(1)) \frac{\exp\left(-\frac{(k')^2}{2z(i)}\right)}
    {1 - \exp\left(-\frac{k'}{2z(i)} \right)}
    \\
    &\sim
    \exp\left(-\frac{(k')^2}{2z(i)}-\ln\left(\frac{z(i)}{k'}\right)\right)
    \\
    &=o(1),
  \end{split}
\end{equation*}
since $z(i)^{1/2}< k'< z(i)^{2/3}$ and $z(i)=\omega(1)$. Thus, we can
ignore the terms with $z(i)+2i+j$ with $j > k'$. By similar computations,
we have
\begin{equation*}
  \frac{f(i, z(i)+2i-k)}{f(i, z(i)+2i)}
  \sim  \exp\left(
    -\frac{k^2}{2\ell-z(i)-2i}
    - \frac{k^2}{2z(i)}
    - \frac{k^2}{2(n-4\ell+z(i)+2i)}\right)
\end{equation*}
and the lower tail can be bounded in the same manner. Thus,
\begin{equation*}
  \begin{split}
    f_i &\sim \sum_{k=-z(i)^\alpha}^{z(i)^\alpha} f(i,z(i)+2i)\times
    \\
    &\quad\qquad \times\exp\left( -\frac{k^2}{2\ell-z(i)-2i} - \frac{k^2}{2z(i)} -
      \frac{k^2}{2(n-4\ell+z(i)+2i)}\right)
    \\
    &\sim f(i, z(i)+2i) \sqrt{n}\times
    \\
    &\quad\qquad\times\int_{y = -\infty}^{\infty}
    \hspace{-15pt}\exp\left( -y^2
    \left(\frac{n}{2\ell-z(i)-2i} + \frac{n}{2z(i)} +
        \frac{n}{2(n-4\ell+z(i)+2i)}\right)\right) dy
    \\
    &\sim \sqrt{\pi}\left({\frac{1}{2z(i)} +
        \frac{1}{2\ell-z(i)-2i}+\frac{1}{2(n-4\ell+z(i)+2i)}}\right)^{-1/2}  f(i,
    z(i)+2i) \\
    &= f'_i.\qedhere
  \end{split}
\end{equation*}
\end{proof}

In the next lemma, we compute the ratio $f'_i/f'_{i-1}$.

\begin{lemma}
    \lab{l:super-ratio} Suppose that $i \leq \delta \ell$. Then
  \begin{equation*}
    \frac{f'_i}{f'_{i-1}}
    =
    \frac{z(i)^2}{8 i (\ell-i)^2}
    \left(1  + O\left(\frac{n}{\ell(n-2\ell)}\right) +
        O\left(\frac{n}{(n-2\ell)^2}\right) +O\left(\frac{1}{z(i)}\right)\right).
  \end{equation*}
\end{lemma}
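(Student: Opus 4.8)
\textit{Proof plan.} Write $c_i = \sqrt{\pi}\bigl(\tfrac{1}{2z(i)} + \tfrac{1}{2\ell-z(i)-2i}+\tfrac{1}{2(n-4\ell+z(i)+2i)}\bigr)^{-1/2}$, so that by \eqref{z} we have $f'_i = c_i\, f(i, z(i)+2i)$ for $i\le\delta\ell$. The plan is to factor
\[
  \frac{f'_i}{f'_{i-1}}
  =
  \frac{c_i}{c_{i-1}}\cdot
  \frac{f(i, z(i)+2i)}{f(i-1, z(i)+2i)}\cdot
  \frac{f(i-1, z(i)+2i)}{f(i-1, z(i-1)+2(i-1))},
\]
estimate the middle factor by a switching, and show the two outer factors are $1+O(1/z(i)+n/(\ell(n-2\ell))+n/(n-2\ell)^2)$, which is the error term in the statement. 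Throughout I will use that for $i\le\delta\ell=\tfrac{9}{10}\ell\le n/2$ one has $\ell-i=\Theta(\ell)$ and $n-2i=\Theta(n)$, and that $z(i)=\Theta(\ell^2/n)=\omega(1)$ (as in Lemma~\ref{l:super-formula}).

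For the middle factor I would reuse the switching of Figure~\ref{fig:switching-nearperfect} (the one from Lemma~\ref{l:nearperfect-ratio}): from $(M,M')\in F(i,n_2)$ pick a shared edge $x=\{1,2\}$, a non-shared edge $y=\{3,4\}\in M\setminus M'$ and a non-shared edge $z=\{5,6\}\in M'\setminus M$ not adjacent to $y$, then replace $x,y$ in $M$ by $\{1,3\},\{2,4\}$ and $x,z$ in $M'$ by $\{1,5\},\{2,6\}$. The observation that makes this relevant here (and that was irrelevant in the near-perfect regime) is that the switching rewires edges only within $\{1,2,3,4\}$ inside $M$ and within $\{1,2,5,6\}$ inside $M'$, so $V(M)$ and $V(M')$ are unchanged; hence $n_2$ is preserved and the switching maps $F(i,n_2)$ into $F(i-1,n_2)$. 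Counting as in Lemma~\ref{l:nearperfect-ratio}, from a fixed $(M,M')\in F(i,n_2)$ there are $2i\cdot 2(\ell-i)\cdot 2(\ell-i-O(1))=8i(\ell-i)^2\bigl(1+O(1/(\ell-i))\bigr)$ applicable switchings; for the inverse, from a fixed $(Q,Q')\in F(i-1,n_2)$ one chooses an ordered pair of doubly covered vertices $1,2$ not lying on shared edges, reads off $3,5$ as the $Q$- and $Q'$-neighbours of $1$ and $4,6$ as those of $2$, and requires $1,\dots,6$ distinct, $\{3,4\}\notin Q'$ and $\{5,6\}\notin Q$; this gives $(n_2-2(i-1))^2\bigl(1+O(1/(n_2-2(i-1)))\bigr)$ choices, uniformly. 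One checks this is genuinely the inverse and that both counts are uniform in the pair, so $f(i,n_2)/f(i-1,n_2)=\dfrac{(n_2-2(i-1))^2}{8i(\ell-i)^2}\bigl(1+o(1)\bigr)$; specialising to $n_2=z(i)+2i$ gives $\dfrac{(z(i)+2)^2}{8i(\ell-i)^2}\bigl(1+O(1/z(i))+O(1/\ell)\bigr)$, and $O(1/\ell)=O(n/(\ell(n-2\ell)))$ since $n-2\ell\le n$; finally $(z(i)+2)^2=z(i)^2(1+O(1/z(i)))$.

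For the outer factors, first note the elementary identities $2\ell-z(i)-2i=2(\ell-i)(n-2\ell)/(n-2i)$ and $n-4\ell+z(i)+2i=(n-2\ell)^2/(n-2i)$, so the three denominators inside $c_i$ are of orders $\Theta(\ell^2/n)$, $\Theta(\ell(n-2\ell)/n)$ and $\Theta((n-2\ell)^2/n)$, i.e.\ their reciprocals are $\Theta(1/z(i))$, $\Theta(n/(\ell(n-2\ell)))$ and $\Theta(n/(n-2\ell)^2)$. Differentiating, $|z'(x)|=8(\ell-x)(n-\ell-x)/(n-2x)^2=O(1)$ for $x\le\delta\ell$ (using $n-2x=\Theta(n)$, $\ell-x,\,n-\ell-x\le n$), hence $z(i-1)-z(i)=O(1)$ and each of the three denominators changes by $O(1)$ from index $i-1$ to $i$; since a relative change of $\epsilon$ in the inner sum produces a relative change $O(\epsilon)$ in $(\cdot)^{-1/2}$, this gives $c_i/c_{i-1}=1+O(1/z(i)+n/(\ell(n-2\ell))+n/(n-2\ell)^2)$. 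For the last factor, $z(i)+2i-(z(i-1)+2(i-1))=z(i)-z(i-1)+2=O(1)$, and $z(i)+2i$ lies within $O(1)$ of the centre $z(i-1)+2(i-1)$ of the $n_2$-profile for index $i-1$, so Lemma~\ref{l:super-formula} (its second assertion, applied at index $i-1$) gives $f(i-1,z(i)+2i)/f(i-1,z(i-1)+2(i-1))=1+O(1/z(i)+n/(\ell(n-2\ell))+n/(n-2\ell)^2)$. Multiplying the three estimates yields the claim, uniformly in $i\le\delta\ell$.

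The main obstacle is the bookkeeping around the inverse switching: checking it is exactly the inverse of the forward operation (so that the ratio $f(i,n_2)/f(i-1,n_2)$ is pinned down up to the harmless $(1+o(1))$ coming from the $O(1)$ additive corrections), and that the inverse count $(n_2-2(i-1))^2(1+O(1/(n_2-2(i-1))))$ is uniform over $(Q,Q')\in F(i-1,n_2)$. Beyond that, the only real care needed is verifying that all the $O(1)$ shifts — in $z$ and in the second argument of $f$ — are $O(1)$ \emph{uniformly} in $i\le\delta\ell$; this is exactly where the restriction $i\le\delta\ell$ with $\delta=9/10<1$ is used, to keep $\ell-i$ and $n-2i$ of full order. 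The remaining manipulations are routine algebra.
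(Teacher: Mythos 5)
Your proposal is correct and follows essentially the same route as the paper: the same three-factor decomposition of $f'_i/f'_{i-1}$, the same reuse of the switching from Lemma~\ref{l:nearperfect-ratio} (which fixes $V(M),V(M')$ and hence $n_2$) to estimate $f(i,z(i)+2i)/f(i-1,z(i)+2i)$, and the same appeal to the second assertion of Lemma~\ref{l:super-formula} together with $z(i-1)-z(i)=O(1)$ for the $O(1)$ shift in the second argument. The only cosmetic difference is that you bound the prefactor ratio $c_i/c_{i-1}$ by a perturbation argument, whereas the paper computes it explicitly as $\bigl((\ell-i)^2(n-2i+2)^3/((\ell-i+1)^2(n-2i)^3)\bigr)^{1/2}=1+O(1/\ell)$; both land within the stated error term.
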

\begin{proof}
  By~\eqn{z}, for $i\leq \delta \ell$, we have
\begin{align*}
  \frac{f'_i}{f'_{i-1}}
  =
  &\sqrt{\frac{\displaystyle{\frac{1}{2z(i-1)} +
      \frac{1}{2\ell-z(i-1)-2i+2}+\frac{1}{2(n-4\ell+z(i-1)+2i-2)}}}{\displaystyle{\frac{1}{2z(i)} +
      \frac{1}{2\ell-z(i)-2i}+\frac{1}{2(n-4\ell+z(i)+2i)}}}}
  \\
  &\cdot
  \frac{f(i-1,z(i)+2i)}{f(i-1,z(i-1)+2i-2)}
  \cdot
  \frac{f(i,z(i)+2i)}{f(i-1,z(i)+2i)}
\end{align*}
We will analyse each of these three ratios separately. The square of the first ratio (the expression can be easily simplified using Maple) equals
\begin{equation*}
  \frac{(\ell-i)^2(n- 2i + 2)^3}
  {(\ell-i+1)^2(n-2i)^3}
  =
  1+O\paren[\Big]{\frac{1}{\ell-i}}+O\paren[\Big]{\frac{1}{n-2i}}.
\end{equation*}
We have that $1/(\ell-i) = O(1/\ell)$ since $i\leq \delta \ell$ with
$\delta < 1$ and similarly, $1/(n-2i)=O(1/\ell)$.
Thus, the first ratio is $1 + O(1/\ell)$. Next, we analyse the
second ratio. Using that $\ell^2/n \to \infty$, it follows easily
that $z(i-1)-z(i) = O(1)$. Thus, we
have that the second ratio is $1+ O(1/z(i)+n/ (\ell(n-2\ell)) +n/(n-2\ell)^2)$ by
Lemma~\ref{l:super-formula}.

Finally, we analyse the last ratio. We use the same switching in the
proof of Lemma~\ref{l:nearperfect-ratio} to analyse the ratio
$f(i,n_2)/f(i-1,n_2)$, where $n_2 = z(i)+2i$. As it was shown that the
number of ways to perform a switching is
$8i(\ell-i)(\ell-i+O(1))=8i(\ell-i)^2(1+O(1/\ell))$, and the number of
ways to perform an inverse switching is
$(n_2-2(i-1))(n_2-2(i-1)+O(1))=z(i-1)^2(1+O(1/z(i)))$. Thus, we have
\begin{equation*}
{\frac{f(i,z(i)+2i)}{f(i-1,z(i)+2i)}}
  =
  \frac{z(i)^2}{8i(\ell-i)^2}
  \left(1+ O\left(\frac{1}{\ell}\right) +
    O\left(\frac{1}{z(i)}\right)\right).\qedhere
\end{equation*}
\end{proof}

\begin{proof}[Proof of Theorem~\ref{t:super}]
  For any $m = pN + O(\sqrt{Np})$, consider $\G(n,m)$. Apply
  Theorem~\ref{t:Gnm} with $h=\ell$ and $(f'_j)_{j=1}^{\ell}$. By
  Lemma~\ref{l:super-formula}, we have that $f_j\sim f_j'$ for all
  $1\leq j\leq \ell$. By our assumptions on~$p$ and~$\ell$, it is
  straightforward to verify that $\ell^3=o(m^2)$ and
  $\ell^2=\Omega(m)$. Next, we show that the conditions (a)--(c) in
  Theorem~\ref{t:Gnm} hold for
  $\gamma(n):=\delta\ell=9\ell/10$. By
  Lemma~\ref{l:super-ratio}, for $1\leq j \leq K \ell^2/m$, we have
  that
  \begin{equation*}
    \begin{split}
      r_j
      &= \frac{z(j)^2}{8j(\ell-j)^2}
      \left(1  + O\left(\frac{n}{\ell(n-2\ell)}\right) +
        O\left(\frac{n}{(n-2\ell)^2}\right)
        +O\left(\frac{1}{z(j)}\right)\right)
      \\
      &=
      \frac{\ell^2}{Nj}
      \left(1  + O\left(\frac{j}{\ell}\right) + O\left(\frac{n}{\ell(n-2\ell)}\right) +
      O\left(\frac{n}{(n-2\ell)^2}\right)
      +O\left(\frac{1}{z(j)}\right)\right).
    \end{split}
  \end{equation*}
  We have that $\ell=\Omega(n\sqrt{p}) =
  \Omega(\sqrt{n}\cdot\sqrt{np}) = \omega(\sqrt{n})$ since $np\to
  \infty$. Using this together with $\ell^3 = o(n^4p^2)$, we obtain
  \begin{equation*}
    n^{1+\alpha} p
    =
    \omega\left(n^{1+\alpha} \frac{\ell^{3/2}}{n^2} \right)
    =
    \ell\omega\left( \frac{\ell^{1/2}}{n^{1-\alpha}} \right)
    =
    \ell\omega(n^{1/8})
  \end{equation*}
  and
  \begin{equation*}
    n^{1+2\alpha} p
    =  \omega\left(n^{1+2\alpha} \frac{\ell^{3/2}}{n^2} \right)
    =\ell^2\omega\left(\frac{n^{-1+2\alpha}}{\ell^{1/2}} \right)
    =
    \ell^2\omega(n^{1/4}).
  \end{equation*}
  This implies $\ell = o(n^{1+\alpha} p)$ and $\ell^2 =
  o(n^{1+2\alpha} p)$. Thus, we have that
  \begin{equation*}
    \frac{j}{\ell}
    \leq \frac{K\ell}{m}    =
    O\left(
      \frac{\ell}{n^2 p}
    \right)
    =
    O\left(
      \frac{n^2 p}{\ell^2}
    \right)
    \cdot
    O\left(
      \frac{\ell^3}{n^4p^2}
    \right)
    =
    o \left(
      \frac{n^2 p}{\ell^2}
    \right);
  \end{equation*}
  and
  \begin{equation*}
    \begin{split}
      \frac{n}{\ell(n-2\ell)}
      = O\left(
        \frac{n^{1-\alpha}}{\ell}
      \right)
      =
      O\left(
        \frac{n^2p}{\ell^2}
      \right)
      \cdot
      O\left(
        \frac{\ell}{n^{1+\alpha}p}
      \right)
      =
      o\left(
        \frac{n^2p}{\ell^2}
      \right);
        \end{split}
  \end{equation*}
  and
  \begin{equation*}
    \begin{split}
      \frac{n}{(n-2\ell)^2}
            = O\left(
         n^{1-2\alpha}
      \right)
      =
      O\left(
        \frac{n^2p}{\ell^2}
      \right)
      \cdot
      O\left(
        \frac{\ell^2}{n^{1+2\alpha}p}
      \right)
      =
      o\left(
        \frac{n^2p}{\ell^2}
      \right);
    \end{split}
  \end{equation*}
  and
  \begin{equation*}
    \begin{split}
      \frac{1}{z(j)}
      =
      O\left(\frac{n}{\ell^2}\right)
      =
          O\left(
        \frac{n^2p}{\ell^2}
      \right)
      \cdot
      O\left(
        \frac{1}{np}
      \right)
      =
      o\left(
        \frac{n^2p}{\ell^2}
      \right).
    \end{split}
  \end{equation*}
  Thus, condition (a) holds. Now we will check condition (b). We have
  that for, $4\ell^2/m \leq j\leq \delta \ell$,
  \begin{equation*}
    r_j
    \sim\frac{z(j)^2}{8j(\ell-j)^2}
    =
    \frac{2(\ell-j)^2}{j(n-2j)^2}.
  \end{equation*}
  By computing the derivative of the RHS with respect to $j$ and using $n\geq 2\ell$, it
  is easy to see that the derivative is negative. At $j = 4\ell^2/m$,
  using $\ell^2/(n^3p) = o(1)$,
  \begin{equation*}
    \frac{2(\ell-j)^2}{j(n-2j)^2}
    \leq
    \frac{m}{2(n-2j)^2}
    =
    \frac{m}{2n^2 (1+o(1))}
    \leq
    \frac{m}{4N}(1+o(1)).
    \end{equation*}
    So condition (b) holds. Condition (c) holds by Lemma~\ref{l:tail}
    (with $\delta=9/10$). Hence, we have that
    $X_{n,\ell}/\ex_{\G(n,m)}(X_{n,\ell})\xrightarrow{p} 1$ by
    Theorem~\ref{t:Gnm}. Since $1-p=\Omega(1)$ by assumption, we have
    $\beta_{n,\ell}=\Omega(\ell/\sqrt{p}n)=\Omega(1)$. Then
    Theorem~\ref{t:super} follows by Theorem~\ref{t:Gnp}.
\end{proof}


\end{document}